\documentclass[10pt]{amsart}

\usepackage{amsmath,amssymb,amsthm}
\usepackage{graphicx}
\usepackage{tikz}
\usetikzlibrary{arrows.meta, shapes.geometric}
\usepackage{url}
\IfFileExists{hyperref.sty}{\usepackage[colorlinks=true,linkcolor=blue,citecolor=blue,urlcolor=blue]{hyperref}}{}
\IfFileExists{todonotes.sty}{\usepackage{todonotes}}{\newcommand{\todo}[2][]{}}

\newcommand{\Q}{\mathbb{Q}}
\newcommand{\Z}{\mathbb{Z}}
\newcommand{\C}{\mathbb{C}}
\newcommand{\F}{\mathbb{F}}
\newcommand{\Qbar}{\overline{\Q}}
\DeclareFontEncoding{OT2}{}{}
\DeclareFontFamily{OT2}{wncyr}{}
\DeclareFontShape{OT2}{wncyr}{m}{n}{<->wncyr10}{}
\DeclareFontSubstitution{OT2}{wncyr}{m}{n}
\newcommand{\Sha}{\text{\usefont{OT2}{wncyr}{m}{n}Sh}}
\newcommand{\fp}{\mathfrak{p}}

\newcommand{\ff}{\mathfrak{f}}
\newcommand{\OK}{\mathcal{O}_K}

\DeclareMathOperator{\rank}{rank}
\DeclareMathOperator{\ord}{ord}
\DeclareMathOperator{\Reg}{Reg}
\DeclareMathOperator{\Sel}{Sel}
\DeclareMathOperator{\corank}{corank}
\DeclareMathOperator{\Gal}{Gal}
\DeclareMathOperator{\Tr}{Tr}
\DeclareMathOperator{\Nm}{N}
\DeclareMathOperator{\car}{char}

\newtheorem{theorem}{Theorem}[section]
\newtheorem{lemma}[theorem]{Lemma}
\newtheorem{proposition}[theorem]{Proposition}

\newtheorem{corollary}[theorem]{Corollary}

\newtheorem{thmintro}{Theorem}

\newtheorem{problem}[theorem]{Problem}
\newtheorem{question}{Question}
\newenvironment{qrestate}[1]{\par\medskip\noindent\textbf{Question~\ref{#1}.}\ \itshape\ignorespaces}{\par\medskip}
\theoremstyle{definition}
\newtheorem{definition}[theorem]{Definition}
\theoremstyle{remark}
\newtheorem{remark}[theorem]{Remark}

\title[Second derivatives of $p$-adic $L$-functions and $\Sha$]{Second derivatives of $p$-adic $L$-functions and the Shafarevich--Tate group of rank-two CM elliptic curves}

\author{Barinder S.~Banwait}
\address{London, UK}
\email{barinder.s.banwait@gmail.com}
\urladdr{\url{https://barindersbanwait.com}}

\subjclass[2020]{Primary 11G05, 11G40; Secondary 11R23, 11G16}
\keywords{Shafarevich--Tate group, Iwasawa theory, Katz $p$-adic $L$-functions, Eisenstein--Kronecker numbers, elliptic units, complex multiplication}

\begin{document}

\begin{abstract}
For an elliptic curve $E/\Q$ of rank two with complex multiplication, Coates, Liang and Sujatha gave a criterion for the vanishing of $\Sha(E/\Q)[p^\infty]$ at a good ordinary prime $p$ and applied it to five such curves for $p < 30{,}000$. We prove a cyclotomic criterion of the same kind: outside an explicit set of primes, the normalised second Taylor coefficient of the Mazur--Tate--Teitelbaum $p$-adic $L$-function at the central point is a $p$-adic unit if and only if the cyclotomic $p$-adic regulator is a unit and $\Sha(E/\Q)[p^\infty] = 0$, and, by the theory of Bannai and Kobayashi, if and only if an explicit combination of three critical Hecke $L$-values of weight $2p - 1$ has valuation exactly two. Following the algorithm of Stein and Wuthrich, we compute the regulator for the same five curves at every good ordinary prime below $30{,}000$: it is a unit at all but three of the $8{,}050$ primes outside the excluded set. The one case that the criterion of Coates, Liang and Sujatha left open, $p = 577$ for $y^2 = x^3 + 34x$, is settled by the new criterion. A Lean 4 formalisation of the first equivalence, assuming stated results from the literature, is provided.

\end{abstract}

\maketitle

\setcounter{tocdepth}{1}
\tableofcontents

\section{Introduction}\label{sec:intro}

\subsection{The problem}
For an elliptic curve $E/\Q$, the Shafarevich--Tate group $\Sha(E/\Q)$ is conjectured to be finite; and while this is known if $E$ has rank $0$ or $1$, it remains totally open for rank $\geq 2$, and to date there is not even a single known example of such a curve for which we unconditionally know finiteness of $\Sha(E/\Q)$.

Finiteness of $\Sha$ is equivalent to the following two statements:
\begin{enumerate}
  \item[(a)] there exists $M$ such that $\Sha(E/\Q)[p] = 0$ for all
        primes $p > M$;
  \item[(b)] $\Sha(E/\Q)[p^\infty]$ is finite for each $p \le M$.
\end{enumerate}
Statement (b) is \emph{vertical}: it concerns one prime at a time, and in the good ordinary case, cyclotomic Iwasawa theory relates it to the Selmer group over the cyclotomic $\Z_p$-extension. In particular, Coates,
Liang and Sujatha \cite{CLS1,CLS2} gave a criterion for the curves
$y^2 = x^3 - Dx$, which have complex multiplication by $\Z[i]$: at a
prime $p \equiv 1 \bmod 4$ with $p \nmid D$ this criterion compares the $p$-adic
valuation of one critical Hecke $L$-value with the Mordell--Weil rank,
and equality gives $\Sha(E/\Q)[p^\infty] = 0$.  Its input is Rubin's
main conjecture for $\Q(i)$ \cite{Rubin}, read through a
leading-coefficient criterion rather than by Iwasawa descent from a
computed $p$-adic $L$-function, and it is decidable prime by prime. They executed this criterion on five curves of rank two in the family and showed for these five curves that $\Sha(E/\Q)[p^\infty]$ is
finite at every good prime $p \equiv 1 \bmod 4$ below $30{,}000$. Later, the descent formalism of Perrin-Riou
\cite{PerrinRiou82} and Schneider \cite{Schneider} was made algorithmic
by Stein and Wuthrich \cite[Thm.~6.1]{SteinWuthrich}, and in practice it will either prove
$\Sha[p^\infty]$ finite (indeed, compute its conjectural order) or
report a genuine obstruction; the bound on $\#\Sha$ it produces rests
on Kato's divisibility \cite{Kato}, a theorem for curves without
complex multiplication, so the two methods apply to complementary classes
of curves.

Statement (a) is \emph{horizontal}: it
quantifies over all sufficiently large $p$ at once, and it is open for
every elliptic curve over $\Q$ of rank at least two. Even the significantly weaker statement that $\Sha(E/\Q)[p] = 0$ for merely infinitely many $p$ (as opposed to all but finitely many) remains open.

This paper does not prove finiteness of $\Sha$ for any elliptic curve of rank two. What it proves is of type (b). It takes the criterion of Coates, Liang and Sujatha as its starting point, proves a cyclotomic counterpart of it, extends their computations to the cyclotomic side, and states questions that the outcome raises. Its bearing on (a) is through those questions, and concerns the split primes only.

\subsection{What is known}
In rank $\le 1$ statement (a) is a theorem, and its proof rests on one
algebraic object computed once: Kato's Euler system bounds $\#\Sha(E/\Q)$
by an explicit multiple of $L(E,1)/\Omega_E$ \cite[Thm.~17.4]{Kato}, and
Kolyvagin's bounds it by the index of the Heegner point
\cite[Thm.~A]{Kolyvagin}, non-torsion by Gross--Zagier
\cite[Ch.~I, (6.3)]{GrossZagier}; the prime factorisation of that object
determines the finite set of primes excluded.  In rank two the following results are vertical.  Stein and Wuthrich prove $\Sha(E/\Q)[p] = 0$ for
$1{,}534{,}422$ pairs $(E,p)$ with $E$ non-CM of rank $\ge 2$ and
conductor $\le 30{,}000$, at good ordinary $5 \le p < 1000$ with
$\bar\rho_{E,p}$ surjective \cite[Thm.~1.1]{SteinWuthrich}, and for the
curve $389a$ at every good ordinary $p < 48{,}859$ but one
\cite[Thm.~12.3]{SteinWuthrich}; Coates, Liang and Sujatha prove
$\Sha(E/\Q)[p^\infty] = 0$ for five CM curves of rank two at every good $p \equiv 1 \bmod 4$ below $30{,}000$ outside four pairs \cite[Thm.~1.3]{CLS2}.  The Selmer-class constructions adapted to
rank two are vertical as well, one prime at a time and conditional on a
nonvanishing at that prime (\S\ref{ssec:ranktwo}).  Horizontally, Coates, Liang and Sujatha bound the $\Z_p$-corank of $\Sha(E/\Q)[p^\infty]$ for a CM curve by a quantity of order $p$ for all sufficiently large good ordinary $p$ \cite[Thm.~1.1]{CLS2}; they
note that their method cannot prove the vanishing of that corank for
infinitely many $p$ in any new case \cite[\S 1]{CLS1}.

\subsection{Results}\label{ssec:results}
There are two results in this paper: Theorem~\ref{intro:consequence} and Theorem~\ref{intro:criterion}. They both concern a CM curve $E/\Q$ of Mordell--Weil rank two whose $L$-function vanishes at $s = 1$ with root number $+1$, \[\rank E(\Q) = 2, \qquad L(E,1) = 0, \qquad w(E) = +1,\] where $w(E)$ is the sign of the functional equation of $L(E,s)$. These three conditions hold when the algebraic and analytic ranks are both two; the analytic rank itself, that is $L''(E,1) \ne 0$, is not assumed. Each result is stated below with its hypotheses in full, for which we require some definitions.

A prime $p$ is called \emph{split} if it splits in
$K$, in which case we write $p = \fp\bar\fp$. At each good split
prime the Mazur--Tate--Teitelbaum $p$-adic $L$-function $L_p(E,T)$
\cite{MTT} vanishes to order $\ge 2$ at $T = 0$,
and the quantity deciding both $\lambda_{\mathrm{an}}(\fp) = 2$ and the
triviality of $\Sha(E/\Q)[p^\infty]$ is the $\fp$-adic valuation of the
second Taylor coefficient $c_2(p)$. We work below with a normalised version of this, $\tilde c_2(p)$, normalised as in \cite{MTT,SteinWuthrich}. The set $S_E$ appearing in the theorem statements is the set of primes
given explicitly by \eqref{eq:Sexc}; it has density zero, and in some cases (including those arising in the computations), it is finite. We write $\Reg_\gamma$ for the cyclotomic $p$-adic regulator normalised as in \cite{SteinWuthrich,MST}.

\begin{thmintro}[The unit condition forces triviality]\label{intro:consequence}
Let $E/\Q$ be an elliptic curve with complex multiplication by the
maximal order of an imaginary quadratic field $K$, with
$\rank E(\Q) = 2$, $L(E,1) = 0$ and $w(E) = +1$.  Let $p = \fp\bar\fp \ge 5$ be a
split prime of good reduction lying outside the set $S_E$ of
\eqref{eq:Sexc}.  If $\tilde c_2(p) \in \Z_p^\times$, then
\[
   \lambda_{\mathrm{an}}(\fp) = 2, \qquad
   \corank_{\Z_p}\Sel_{p^\infty}(E/\Q) = 2, \qquad
   \Sha(E/\Q)[p^\infty] = 0 .
\]
\end{thmintro}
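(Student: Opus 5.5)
The plan is to run the cyclotomic Iwasawa-theoretic descent of Perrin-Riou and Schneider, in the form made explicit by Stein and Wuthrich \cite[Thm.~6.1]{SteinWuthrich}, with Rubin's two-variable main conjecture for $K$ \cite{Rubin} in place of Kato's divisibility. At a split good prime $p\ge5$ the curve is ordinary. Restricting Rubin's main conjecture from the $\Z_p^2$-extension of $K$ to the cyclotomic line, and using the factorisation of the restricted Katz $p$-adic $L$-function as $L_p(E,T)$ up to a unit, gives an equality of characteristic ideals
\[
\mathrm{char}_\Lambda X(E/\Q_\infty) = (L_p(E,T)),
\]
where $X$ is the Pontryagin dual of the $p^\infty$-Selmer group over the cyclotomic $\Z_p$-extension. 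I expect the set $S_E$ of \eqref{eq:Sexc} to contain exactly the primes where this identification can fail. These include primes dividing $\#E(\Q)_{\rm tors}$, the Tamagawa numbers, $\#\tilde E(\F_p)$ (anomalous primes), and primes where the period comparison between the Katz and MTT normalisations introduces a nonunit. Under these exclusions the correction factors in the leading-term formula are units.

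The core step is the leading-coefficient formula. Write $r=\rank E(\Q)=2$. The descent computes the coefficient of $T^{r}$ in a generator $f(T)$ of $\mathrm{char}\,X$, up to a unit, as
\[
\frac{\Reg_\gamma}{p^{r}}\cdot \#\Sha(E/\Q)[p^\infty]\cdot \frac{\prod_\ell c_\ell\cdot (1-a_p/\alpha)^{-2}}{\#E(\Q)_{\rm tors}^2},
\]
valid when $\Sha[p^\infty]$ is finite and $\Reg_\gamma\ne0$. In general one gets the weaker statement that $\ord_{T=0} f\ge \corank\Sel$, with equality exactly under those finiteness and nondegeneracy conditions. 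The normalisation of $\tilde c_2(p)$ (following \cite{MTT,SteinWuthrich}) is chosen so that it equals this coefficient with the Euler factor and the $p^{-r}$ absorbed.

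The argument then runs in order. First, $\ord_T L_p\ge2$, and $\tilde c_2(p)\in\Z_p^\times$ means the order is exactly $2$ and the $T^2$ coefficient is a unit. Second, by the main conjecture $f$ has $\lambda$-invariant $\lambda_{\rm an}(\fp)=2$. Moreover $\mu=0$, because a unit coefficient precludes $p\mid f$. Third, the control theorem (Mazur's, with the exclusion of anomalous primes ensuring unit local factors) gives $\corank\Sel_{p^\infty}(E/\Q)\le \lambda = 2$. Since $E(\Q)\otimes\Q_p/\Z_p$ already has corank $2$, equality holds. Hence $\Sha[p^\infty]$ is finite, and the $X(E/\Q_\infty)^{\Gamma}$ and coinvariants have no excess, so the characteristic power series vanishes to order exactly $2$. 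Finally, apply the leading-term formula. Each factor is $p$-integral: $\Reg_\gamma/p^2\in\Z_p$ for $p\ge5$ after the standard normalisation. A unit product therefore forces $\#\Sha(E/\Q)[p^\infty]=1$, and incidentally forces $\Reg_\gamma/p^2$ to be a unit.

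The main obstacle is the second step: transferring Rubin's main conjecture, stated for the Katz two-variable function and the $\fp$-Selmer group over $K_\infty$, to an exact equality (not merely a divisibility) for $L_p(E,T)$ over $\Q$. This needs three ingredients. The first is the decomposition $\Sel_{p^\infty}(E/K)=\Sel_{\fp^\infty}\oplus\Sel_{\bar\fp^\infty}$, together with the identification of the $\Q$-Selmer group with one eigenspace. The second is the comparison of periods $\Omega_E$ with the Katz CM period, where I would first try to show the ratio is a $p$-unit outside $S_E$. The third is the absence of finite submodules in $X$ at such $p$. I would handle this by quoting Greenberg's result on finite submodules together with Perrin-Riou's explicit comparison of the two $p$-adic $L$-functions, and I would put every prime at which any of these units might fail into $S_E$.
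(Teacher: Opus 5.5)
Your argument is correct, but it takes a different route from the paper's proof of this theorem. It is essentially the descent that the paper deliberately postpones to Proposition~\ref{prop:dictionary} (see Remark~\ref{rmk:nofinitesub}).

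\textbf{Common starting point.} Both proofs begin from $\car_\Lambda(X) = (L_p(E,T)) = (T^2)$. The paper obtains this by quoting Rubin's Theorem~12.3, which is already the cyclotomic statement over $\Q$ for the Mazur--Swinnerton-Dyer series. It then converts to the MTT normalisation through the Manin constant and an isogeny of degree prime to $p$ (Lemma~\ref{lem:msdmtt}). The Katz period comparison plays no part in this theorem. Your plan to restrict the two-variable conjecture and compare periods would have to redo the specialisation of characteristic ideals that Rubin's \S 12 already carries out.

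\textbf{The paper's route.} From $(T^2)$ the paper argues purely structurally. Since $\mu(X)=0$, Greenberg's theorem on finite submodules gives $X\cong\Z_p^2$ with $T$ acting by a nilpotent $M$. The exact control theorem, with all local terms vanishing outside $S_E$, gives $X/MX\cong\Sel_{p^\infty}(E/\Q)^\vee$, whose rank is at least $2$; this forces $M=0$. Torsion-freeness of $\Sel^\vee\cong\Z_p^2$ then kills $\Sha(E/\Q)[p^\infty]$. No heights, regulators or leading-term constants enter.

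\textbf{Your route.} You use only the finite-kernel form of control to get $\corank\Sel_{p^\infty}(E/\Q)\le\lambda(X)=2$, hence finiteness of $\Sha$. Schneider's theorem then does the rest:
\begin{enumerate}
\item the equality $\ord_{T=0}\car(X)=2=\corank$ gives nondegeneracy of the height pairing;
\item the leading-term formula, with a unit on one side and integral factors on the other, forces $\#\Sha(E/\Q)[p^\infty]=1$.
\end{enumerate}

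\textbf{What each buys.} Your route proves more: it yields nondegeneracy and $v_p(\Reg_\gamma)=0$ at once, i.e.\ direction (1)$\Rightarrow$(2) of Proposition~\ref{prop:dictionary}. It also never needs the absence of finite submodules, which you listed as an ingredient. The price is dependence on Schneider's formula with every normalising constant matched up to $p$-adic units. It also depends on the integrality of the normalised regulator, which you assert without proof. The paper proves that integrality from $v_p(\hat h_p(P))\ge 1$ on the subgroup $E^\circ(\Q)$, whose index in $E(\Q)$ is prime to $p$ outside $S_E$.

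\textbf{Fixes for the write-up.}
\begin{enumerate}
\item Say explicitly that nondegeneracy comes from the equality case of Schneider's inequality. Nothing else supplies it in rank two.
\item Your displayed formula double-normalises. The normalised regulator $\Reg_\gamma=\Reg_p/\log_p(1+p)^2$ is already integral; it should not be divided by $p^2$ again.
\item The multiplier should be $(1-\alpha^{-1})^{2}$, not $(1-a_p/\alpha)^{-2}$. Note that $1-a_p/\alpha=-\beta/\alpha$ is not a unit.
\item Justify $\ord_T L_p\ge 2$ from the interpolation formula and the functional equation, or from Schneider's inequality.
\end{enumerate}
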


\noindent This is proved as Theorem~\ref{prop:consequence}, by Rubin's main
conjecture for $K$ together with a structure argument that replaces the
Schneider--Perrin-Riou descent formalism.

Theorem~\ref{intro:consequence} leaves the question of when its condition $\tilde c_2(p) \in \Z_p^\times$ (hereafter referred to simply as \emph{the unit condition}) holds. Theorem~\ref{intro:criterion} gives two answers. Arithmetically, the unit condition holds if and only if the cyclotomic $p$-adic regulator is a unit and $\Sha(E/\Q)[p^\infty] = 0$; this is the form the computations of \S\ref{ssec:intro_evidence} test. Analytically, it is decided by critical Hecke $L$-values, as in the criterion of Coates, Liang and Sujatha. At a split prime, $L_p(E,T)$ is, up to a comparison constant and a unit power series (Lemma~\ref{lem:comparison}), the restriction of the Katz measure \cite{Katz} to the cyclotomic line through the central character of the Grössencharacter $\psi_E$ of $E$ (\S\ref{ssec:katz}), and that measure interpolates the critical Hecke $L$-values of $K$. So $c_2(p)$ is, up to a unit and the factor $\log_p(1+p)^{-2}$, the second moment of the measure along that line: the integral of $\ell^2$, for $\ell$ the $p$-adic logarithm of the cyclotomic character. Since $\log_p z \equiv 1 - z^{p-1} \bmod p^2$ for a local unit $z$, this second moment is, modulo $p^3$, a combination of three polynomial moments of the measure in the local coordinates at $\fp$ and $\bar\fp$, of total degree $2p-2$. Bannai and Kobayashi \cite{BannaiKobayashi} evaluate the polynomial moments of the Katz measure as \emph{Eisenstein--Kronecker numbers} of the CM lattice: the Taylor coefficients of the Kronecker theta function at torsion points (\S\ref{ssec:BK}); a moment of degree $2p-2$ pairs with a number of weight $2p-1$. Summed over the ray class group modulo the conductor $\ff$ of $\psi_E$ (the class sums of \S\ref{sec:jet}), these numbers are, up to periods, the critical values $L_\ff(\bar\psi_E^{\,2p-1}, s)$ (Proposition~\ref{prop:classsumsL}). The number $B(\fp)$ of Theorem~\ref{intro:criterion} is the combination \eqref{eq:bracket} of the three critical values that arise: at $s = 1$, $s = p$ and $s = 2p-1$, with the Euler factors at $\fp$ and $\bar\fp$ and the factorials of the evaluation (Corollary~\ref{cor:bracketL}). Its $\fp$-adic valuation is at least $2$, and the criterion is that it equals $2$.

\begin{thmintro}[The exact criterion]\label{intro:criterion}
Let $E/\Q$ be an elliptic curve with complex multiplication by the
maximal order of an imaginary quadratic field $K$, with
$\rank E(\Q) = 2$, $L(E,1) = 0$ and $w(E) = +1$.  Let $p = \fp\bar\fp \ge 5$ be a
split prime of good reduction lying outside the set $S_E$ of
\eqref{eq:Sexc}, and let $B(\fp)$ be the combination \eqref{eq:bracket}
of three Eisenstein--Kronecker class sums of weight $2p-1$.  The
following are equivalent:
\begin{enumerate}
\item $\tilde c_2(p) \in \Z_p^\times$;
\item the cyclotomic $p$-adic height pairing on $E(\Q)$ is
      nondegenerate with $v_p\bigl(\Reg_\gamma\bigr) = 0$, and
      $\Sha(E/\Q)[p^\infty] = 0$;
\item $v_\fp\bigl(B(\fp)\bigr) = 2$.
\end{enumerate}
\end{thmintro}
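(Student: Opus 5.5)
The plan is to prove (1)$\Leftrightarrow$(2) by a $p$-adic Birch--Swinnerton-Dyer leading-term formula for the cyclotomic Selmer group, and (1)$\Leftrightarrow$(3) by evaluating $\tilde c_2(p)$ modulo a suitable power of $\fp$ through the Katz measure.

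For (1)$\Leftrightarrow$(2), I would use Rubin's main conjecture for $K$, transported to the cyclotomic line through Lemma~\ref{lem:comparison}. Up to a unit in $\Z_p[[T]]$, this identifies $L_p(E,T)$ with a characteristic power series $f(T)$ of the dual Selmer group $X(E/\Q_\infty)$. Outside $S_E$, the comparison constant, the Tamagawa numbers and the anomalous factor $(1-1/\alpha)^2$ are $p$-adic units, and $E(\Q)[p]=0$; this is what $S_E$ is designed to guarantee. Write $f(T)=T^r g(T)$ with $r\ge 2$, where $r\ge 2$ because $\rank E(\Q)=2$ forces $\corank_{\Z_p}\Sel\ge 2$. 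The Perrin-Riou--Schneider formula, in the form already used for Theorem~\ref{intro:consequence}, then gives
\[
\tilde c_2(p)\sim \Reg_\gamma\cdot \#\Sha(E/\Q)[p^\infty]
\]
up to a unit, whenever $r=2$ and the height pairing is nondegenerate. If instead the pairing is degenerate, or $\Sha[p^\infty]$ is infinite, then $T^3\mid f$ and so $\tilde c_2(p)=0$.

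With this formula, (2)$\Rightarrow$(1) is immediate. For (1)$\Rightarrow$(2), Theorem~\ref{intro:consequence} gives $\Sha[p^\infty]=0$ and corank $2$. The formula then forces $\Reg_\gamma$ to be a unit, and in particular nonzero, so the pairing is nondegenerate. I would also use Theorem~\ref{intro:consequence}'s structure argument in place of the descent formalism: $X(E/\Q_\infty)_\Gamma$ is controlled (no $\mu$-invariant, and a semisimplicity statement at $T=0$), so that the leading coefficient of $f$ equals the order of the $\Gamma$-coinvariant torsion times the determinant of the height pairing on a $\Z_p$-basis.

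For (1)$\Leftrightarrow$(3), I would express $c_2(p)$ as $\log_p(1+p)^{-2}\int \ell^2\,d\mu$, where $\mu$ is the Katz measure restricted to the cyclotomic line through $\psi_E$. Using $\ell(z)\equiv 1-z^{p-1}\pmod{p^2}$ on local units, I would expand
\[
\ell^2\equiv 1-2z^{p-1}+z^{2p-2}\pmod{p^3}
\]
in the local coordinates at $\fp$ and $\bar\fp$. Each monomial moment is evaluated by Bannai--Kobayashi as an Eisenstein--Kronecker number. Summing over the ray class group via Proposition~\ref{prop:classsumsL} and Corollary~\ref{cor:bracketL} identifies $\int\ell^2\,d\mu$ modulo $p^3$ with $B(\fp)$, up to a unit. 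Since $\log_p(1+p)^2$ has valuation exactly $2$, $\tilde c_2(p)$ is a unit if and only if $v_\fp(B(\fp))=2$. For this to be a genuine equivalence I need $v_\fp(B(\fp))\ge 2$ unconditionally, and I need the congruence to hold modulo $p^3$ rather than $p^2$, so that the error cannot affect valuation $2$. Both should follow from the vanishing of the $0$th and $1$st moments: $L_p$ vanishes to order $\ge2$, and $L(E,1)=0$ removes the constant term.

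The main obstacle is the error term in replacing $\ell^2$ by the polynomial: I must check that $(\ell-(1-z^{p-1}))\cdot\ell$ integrates to something divisible by $p^3$. My first attempt would be to write $\ell=(1-z^{p-1})+p^2h(z)$ with $h$ integral. The cross term is then $p^2\int h\,(1-z^{p-1})\,d\mu$, and since $1-z^{p-1}\equiv 0\pmod p$ on units, this gains the extra factor of $p$. The term $p^4h^2$ is harmless. I would also need to check that the factorials $(p-1)!$ and $(2p-2)!$ in the Bannai--Kobayashi normalisation contribute exactly the expected powers of $p$, and that this is already built into \eqref{eq:bracket}.
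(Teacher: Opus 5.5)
Your argument for (1)$\Leftrightarrow$(2) is essentially the paper's. Theorem~\ref{intro:consequence} gives $\Sha(E/\Q)[p^\infty]=0$ and corank~$2$, Rubin's theorem gives $\car_\Lambda(X)=(L_p(E,T))$, and Schneider's theorem gives first nondegeneracy (your $T^3\mid f$ dichotomy) and then the leading term. Reading $v_p(\Reg_\gamma)=0$ straight off $\Sha(E/\Q)[p^\infty]=0$ is fine. Do not lean on the structure argument of Theorem~\ref{intro:consequence} for the leading coefficient: it sees $X\cong\Z_p^2$ but not the heights, so Schneider's theorem is genuinely needed.

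The gap is in (1)$\Leftrightarrow$(3), in the polynomial that replaces $\ell$. On $G_\infty=\Delta\times G^{(1)}$ (this uses $p\nmid\#\mathrm{Cl}_\ff(K)$), $\ell$ kills $\Delta$ and equals $\log_p x+\log_p y$ in the local coordinates at $\fp$ and $\bar\fp$. Your one-variable expansion $\ell^2\equiv 1-2z^{p-1}+z^{2p-2}$ does not reach $B(\fp)$ under any reading of $z$. It produces moments of total degree $0$, $p-1$ and $2p-2$, i.e.\ class sums of weights $1$, $p$ and $2p-1$, whereas $B(\fp)$ consists of three class sums all of weight $2p-1$.

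\begin{itemize}
\item If $z=\chi_{\mathrm{cyc}}$ (push-forward to the cyclotomic line), the congruence holds. But $\chi_{\mathrm{cyc}}=(xy)^{\pm1}$ on $G^{(1)}$, so $\int\chi_{\mathrm{cyc}}^{j}\,d\mu_\psi$ for $j=p-1,2p-2$ is not a moment $\int x^ky^{-l}\,d\mu_\psi$ with $k,l\ge 0$ as in Lemma~\ref{lem:moments}(2). It has a negative exponent in one Bannai--Kobayashi coordinate: a norm twist of $\psi_E$ off the critical range, not an Eisenstein--Kronecker number.
\item If you apply $1-z^{p-1}$ in each coordinate, $A=(1-x^{p-1})+(1-y^{p-1})$ does give $\int\ell^2\,d\mu_\psi\equiv\int A^2\,d\mu_\psi\pmod{p^3}$. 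But positive powers of $y$ appear, which Lemma~\ref{lem:moments}(2) again does not cover.
\item In either reading, the degree-$(p-1)$ part (e.g.\ $-4\int(x^{p-1}+y^{p-1})\,d\mu_\psi$) can be shown from $\int\ell\,d\mu_\psi=0$ to vanish only modulo $p^2$. That is exactly the critical valuation, so the vanishing of the $0$th and $1$st moments does not remove it.
\end{itemize}

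The missing idea is to approximate the two logarithms in opposite directions: $\log_p x\equiv 1-x^{p-1}$ but $\log_p y\equiv -(1-y^{-(p-1)})\pmod{p^2}$. The two forms for $\log_p y$ differ by $-(1-y^{p-1})^2y^{-(p-1)}\in p^2\Z_p$. The constants then cancel, giving $\ell\equiv y^{-(p-1)}-x^{p-1}\pmod{p^2}$ with both sides in $p\Z_p$, hence
\[
\ell^2\;\equiv\;x^{2p-2}-2x^{p-1}y^{-(p-1)}+y^{-(2p-2)}\pmod{p^3}.
\]
These are three moments $\int x^ky^{-l}\,d\mu_\psi$ with $k,l\ge0$ and $k+l=2p-2$. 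Lemma~\ref{lem:moments}(2) evaluates them exactly as $\Omega_p^{2p-1}$ times the three terms of \eqref{eq:bracket}, Euler factors and factorials included.

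With this approximant, $v_\fp(B(\fp))\ge 2$ follows pointwise from the approximant lying in $p\Z_p$ (the level structure), not from vanishing moments. The vanishing of $c_0$ and $c_1$ of $L^{\mathrm{K}}$ is used elsewhere. It gives $c_2(L^{\mathrm{K}})=\tfrac12\log_p(1+p)^{-2}\int\ell^2\,d\mu_\psi$ from $\binom m2$. It also shows that the unit power series $u(T)$ of Lemma~\ref{lem:comparison} contributes only $u(0)$ to $c_2(p)$.
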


\noindent This is proved as Theorem~\ref{thm:criterion}. The equivalence of (1) and (2) is Proposition~\ref{prop:dictionary}, from Rubin's main conjecture and Schneider's leading-term theorem. That of (1) and (3) runs through the Katz measure: the second Taylor coefficient is a second moment of the measure (Proposition~\ref{prop:jetformula}), its valuation is decided by one residue class (Proposition~\ref{prop:grading}), the comparison and Euler factors do not affect that class (Lemma~\ref{lem:decoupling}), and the class is computed exactly (Proposition~\ref{prop:exactreduction}). Since the class sums in $B(\fp)$ are, up to periods, critical values of the $(2p-1)$-th power of the conjugate Grössencharacter (Proposition~\ref{prop:classsumsL}, Corollary~\ref{cor:bracketL}), Theorem~\ref{intro:criterion} is the cyclotomic counterpart of the Coates--Liang--Sujatha criterion \cite[Thm.~2.2]{CLS1}, which tests one critical value of the $p$-th power; it decides the regulator as well as $\Sha$.

\subsection{The computations}\label{ssec:intro_evidence}
Part~\ref{part:evidence} computes the unit condition for five curves, through the equivalence of (1) and (2) in Theorem~\ref{intro:criterion}.

We work with the five curves of rank two of Coates, Liang and Sujatha \cite[Thm.~1.3]{CLS2},
\[
   y^2 = x^3 - Dx, \qquad D = -14,\ 17,\ -33,\ -34,\ -39,
\]
all with complex multiplication by $\Z[i]$; for the first we take the $2$-isogenous curve $E \colon y^2 = x^3 - 56x$, of conductor $12544 = 2^8 \cdot 7^2$ (\S\ref{ssec:testbed}), to which their result on $\Sha$ transfers (Lemma~\ref{lem:isogeny}). For these curves Theorem~1.3 of \cite{CLS2} gives $\Sha(E/\Q)[p^\infty] = 0$ at every split prime $p < 30{,}000$ outside four pairs, at which their criterion is inconclusive, and Theorem~\ref{intro:consequence} settles the four (Propositions~\ref{prop:threepairs} and~\ref{prop:e4}). The evidence discussed below is therefore free of any hypotheses on $\Sha$ in this range.

\emph{What makes a computation possible.}  What can be computed at a
split prime is not $\tilde c_2(p)$ itself, but rather the cyclotomic $\fp$-adic
regulator $\Reg_\fp$ of a fixed Mordell--Weil basis, and its
normalisation $\Reg_\gamma = \Reg_\fp/\log_p(1+p)^2$
(\S\ref{ssec:notation}).  Theorem~\ref{intro:consequence} converts the
unit condition into an arithmetic statement. Theorem~\ref{intro:criterion} converts it both ways: its statements (1) and (2) are equivalent.  The two directions are used differently.

$(1) \Rightarrow (2)$, read contrapositively, \emph{refutes}.  A
computed $v_p(\Reg_\gamma) > 0$ makes (2) false, hence $\tilde c_2(p) \notin \Z_p^\times$.  This uses nothing about $\Sha$, and is available at every such
prime, in any range.

$(2) \Rightarrow (1)$ \emph{verifies}, and a computation does not
establish (2) on its own.  A computed $v_p(\Reg_\gamma) = 0$ gives
the first clause of (2) but not the vanishing of $\Sha(E/\Q)[p^\infty]$; this is supplied by the Coates--Liang--Sujatha input for $p < 30{,}000$, completed at its four inconclusive pairs by Propositions~\ref{prop:threepairs} and~\ref{prop:e4}, so (2) holds in this range and (1) follows.

\emph{The result at $8{,}052$ primes.} The regulator was computed at every split prime $p < 30{,}000$ of good reduction for each of the five curves, $8{,}052$ primes in all (\S\ref{ssec:scan}, \S\ref{ssec:family}). Two of them, $p = 5$ for $D = 17$ and for $D = -33$, are anomalous and lie in $S_E$. At the other $8{,}050$ the cyclotomic regulator is a unit except at three: $p = 37$ for $y^2 = x^3 + 33x$, and $p = 5$ and $p = 15289$ for $y^2 = x^3 + 39x$, each outside $S_E$, each confirmed by a second implementation, and the one at $p = 5$ also by the modular-symbol $p$-adic $L$-function. Each of the three is therefore a failure of the unit condition, and it is the regulator that fails and not $\Sha$. Every other prime outside $S_E$ is a verified instance of the unit condition $\tilde c_2(p) \in \Z_p^\times$, with no assumption on $\Sha$; for $y^2 = x^3 - 56x$, whose $S_E$ contains no split prime, that is all $1611$ of its primes. Three failures in five curves is the rate at which a random residue vanishes, about $3.8$ expected (\S\ref{ssec:scanweight}), and at the four primes where the Coates--Liang--Sujatha criterion is inconclusive, two of them for the isogeny partner of $y^2 = x^3 - 56x$, the cyclotomic regulator is a unit. So the unit condition fails at split primes outside $S_E$ at a Wieferich-type rate, and the two criteria fail at different primes; \S\ref{sec:questions} states the questions this raises. At the one case their criterion and Wuthrich's computations left open, $p = 577$ for $y^2 = x^3 + 34x$, a modular-symbol computation gives the unit condition directly, hence $\Sha(E/\Q)[577^\infty] = 0$ by Theorem~\ref{intro:consequence} (Proposition~\ref{prop:e4}).

\subsection{The questions}\label{ssec:intro_questions}
The computations raise several questions, which we pose without answering.

\begin{question}\label{q:infinite}
Is the set of split primes at which the unit condition fails an infinite set?
\end{question}

\noindent If the residue of $\Reg_\gamma$ modulo $p$ behaved like a random element of $\F_p$, there would be about $\tfrac12\log\log X$ such primes below $X$; the five curves gave three below $30{,}000$ against about $3.8$ expected.

\begin{question}\label{q:independent}
Are the failures of the cyclotomic criterion and of the criterion of Coates, Liang and Sujatha independent?
\end{question}

\noindent Below $30{,}000$ the two criteria failed at disjoint sets of primes. If the two events at $p$ are independent with probability about $1/p$ each, the convergence of $\sum_p p^{-2}$ suggests that for all but finitely many split $p$ at least one of the two criteria gives $\Sha(E/\Q)[p^\infty] = 0$.

We speculate about the existence of a single algebraic object that governs the failure of the unit condition, analogous to the Heegner point in rank one.

\begin{question}\label{q:fixed}
Is there a fixed algebraic number whose prime divisors contain every split prime at which the unit condition fails?
\end{question}

\noindent The precursor to this question is Gillard's theorem (Theorem~\ref{thm:gillard}), which gives the vanishing of the $\mu$-invariant of every branch of the Katz measure at every split prime not dividing $6\Nm\ff$. The proof descends from Ferrero--Washington \cite{FerreroWashington} and Sinnott \cite{Sinnott} for Dirichlet $L$-functions, Sinnott's rational-function argument transplanted to the CM curve. The coefficient $c_2(p)$ is, up to a unit, a Taylor coefficient of a specialisation of the same object (\S\ref{ssec:BK}), and the question above asks whether a similar phenomenon occurs for that coefficient: the bracket $B(\fp)$ of Theorem~\ref{intro:criterion} is a different algebraic number at each prime, of weight $2p-1$, and the question is whether the primes at which its valuation exceeds $2$ are the prime divisors of one number independent of $p$.

\subsection{Software, data and reproduction}\label{ssec:software}
Most of the computations have been carried out with PARI/GP 2.17.2 \cite{PARI} on an Apple M1 Pro
(arm64) with 16\,GB RAM. The modular symbols computations have been done in SageMath 10.7 \cite{Sage} and its eclib modular symbols. Every script and data file quoted in this paper is in the repository
\begin{center}
\url{https://github.com/BarinderBanwait/rank2sha}
\end{center}
\noindent
It has two directories: \texttt{code/}, holding the
computations of Part~\ref{part:evidence}, and \texttt{formalisation/},
holding the Lean formalisation of Part~\ref{part:formal}.  Within
\texttt{code/}, the \textsc{gp} scripts are under \texttt{gp/}, the
SageMath and Python scripts under \texttt{sage/}, and the output of
every run under \texttt{data/}.  Each script resolves its own data paths
as \texttt{../data/}, so it is run from the directory it sits in.  The file
\texttt{README.md} there maps each number quoted in
Part~\ref{part:evidence} to the script that computes it and to the file
that records it, and \texttt{build.sh} checks a reader's machine for the software named above and runs a short smoke test.

\subsection{Formalisation}\label{ssec:formalintro}
The results of Part~\ref{part:theory} are accompanied by a formal
verification in Lean~4 \cite{Lean4}, in the \texttt{formalisation/}
directory of the repository just named.  It is a formalisation modulo the literature: the
classical theorems this paper takes from other authors are assumed, as
fields of a single Lean structure carrying their pinpoint citations,
and the deductions this paper adds to them are machine-checked.
Theorem~\ref{intro:consequence} and the equivalence of (1) and (2) in Theorem~\ref{intro:criterion} are proved in that sense, as are several of the subsidiary statements; the equivalence of (1) and (3) is not formalised.  The assumed
statements are also shown to be satisfiable, and not to imply
$\Sha(E/\Q)[p^\infty] = 0$ on their own.

A script in the repository checks that no proof in the imported tree
is incomplete and that the audited declarations use no axiom beyond
\texttt{propext}, \texttt{Classical.choice} and \texttt{Quot.sound};
there are no \texttt{axiom} declarations in the formalisation, so every
assumed input appears in the statement of each theorem that uses it.
Part~\ref{part:formal} gives the details: what is proved, what is
assumed, what is not formalised, and what a referee must read to check
that the assumed statements say what the cited papers say.

\subsection{Roadmap}
Part~\ref{part:theory} is the mathematics, stated for any curve satisfying the hypotheses and free of examples: Section~\ref{sec:background} collects the background, Section~\ref{sec:thmA} proves Theorem~\ref{intro:consequence}, Section~\ref{sec:basis} proves Proposition~\ref{prop:dictionary}, Section~\ref{sec:jet} fixes the Eisenstein--Kronecker data, and Section~\ref{sec:criterion} proves Theorem~\ref{intro:criterion}. Part~\ref{part:evidence} carries the computations out: Section~\ref{sec:evidence} the regulator scans of five curves, Section~\ref{sec:bracket} the criterion at sixteen pairs of a curve and a prime. Part~\ref{part:formal} concerns the formalisation, and Part~\ref{part:outlook} discusses the questions in greater detail.

\subsection*{Acknowledgements}
I am grateful to David Loeffler for taking the time to explain to me some of the broader questions of Iwasawa theory concerning rank 2 elliptic curves at the campus pub of the University of East Anglia, Norwich, UK, in July 2026, where the motivation to pursue the question of this paper arose. This was during the \emph{Bridging Lean and the LMFDB} workshop, organised by David Angdinata and Chris Birkbeck. I thank them also for the invitation to participate in that workshop.

\subsection*{Declaration on AI usage}
Claude \cite{Claude} has played an essential role in this project. If AI agents were permitted to be authors of mathematics papers, then Claude would be an author of this one.

I (BSB) describe the AI usage across different aspects of this project.

\emph{The mathematical results.}  Theorems~\ref{intro:consequence} and~\ref{intro:criterion} arose after a long discussion with Claude (Fable 5). This discussion was aimed at provably finding an example of a rank $2$ elliptic curve with finite $\Sha$. Claude suggested an approach for CM curves and proposed a candidate object answering Question~\ref{q:fixed}; the proposal was ultimately seen to be circular. Long story short: the parts of that approach that survived scrutiny became the results of this paper and those that did not became the questions.

\emph{The exposition.}  The writing of the paper has been a collaboration with Claude.

\emph{The computations and formalisation.}  The Sage, PARI/GP, and Lean code were developed across several sessions by several teams of Claude agents on Opus 5, project-managed by Claude on Fable 5 or 5.1. For the Sage and PARI/GP scripts, the paper of Stein--Wuthrich \cite{SteinWuthrich} was given as the computational model to follow. For the Lean formalisation, Claude was instructed to formalise the results of this paper only (i.e.\ modulo the existing results in the literature). They were reviewed and accepted by me.

I accept all responsibility for errors in any aspect of this work.

\part{Theorems}\label{part:theory}
\section{Background}
\label{sec:background}

This section collects the material the rest of the paper uses:
notation (\S\ref{ssec:notation}); the Mazur--Tate--Teitelbaum $p$-adic
$L$-function, the two of its properties used later, and the anomalous
primes (\S\ref{ssec:definitions}); the Katz measure and the
Grossencharacter of $E$ (\S\ref{ssec:katz}); the criterion of Coates,
Liang and Sujatha, of which Theorem~\ref{thm:criterion} is the
cyclotomic counterpart (\S\ref{ssec:cls}); the elliptic units the
measure is built from and the Eisenstein--Kronecker numbers that are its
polynomial moments (\S\S\ref{ssec:ellunits}--\ref{ssec:BK}); and
Gillard's theorem, the precedent for Question~\ref{q:fixed}
(\S\ref{ssec:gillardproof}).  Facts quoted from the literature are given
here as numbered displays or statements with pinpointed references, and
the sections that follow cite those rather than the sources.

\subsection{Notation}\label{ssec:notation}
Throughout, $K$ is an imaginary quadratic field with ring of integers
$\OK$, and $E/\Q$ is an elliptic curve with CM by $\OK$; since $E$ is
defined over $\Q$ its $j$-invariant is rational and generates the Hilbert
class field of $K$, so $K$ has class number one.  The letter $p$ denotes
a rational prime $\ge 5$ of good reduction for $E$ that is \emph{split}
in $K$; we fix once and for all an embedding
$\iota_p \colon \Qbar \hookrightarrow \overline{\Q}_p$ for each $p$, and
$\fp$ denotes the prime of $K$ (or of a larger number field, according
to context) above $p$ determined by $\iota_p$.  For such $p$ the curve
is ordinary, $a_p = \pi + \bar\pi$ with $p = \pi\bar\pi$ in $\OK$, and
$\alpha = \alpha_p$ denotes the unit root of $X^2 - a_pX + p$ under
$\iota_p$.  We write $\Q_\infty/\Q$ for the cyclotomic
$\Z_p$-extension, $\Gamma = \Gal(\Q_\infty/\Q)$, and fix the topological
generator $\gamma$ with $\chi_{\mathrm{cyc}}(\gamma) = 1+p$; the Iwasawa
algebra is $\Lambda = \Z_p[[\Gamma]] \cong \Z_p[[T]]$, $T = \gamma - 1$.
The Mazur--Tate--Teitelbaum $p$-adic $L$-function $L_p(E,T)$ is
normalised as in \cite{MTT,SteinWuthrich} with respect to the N\'eron
periods of $E$; its Taylor coefficients at $T=0$ are written $c_j(p)$,
and $\lambda_{\mathrm{an}}(\fp)$, $\mu_{\mathrm{an}}(\fp)$ denote its
Iwasawa invariants.  The cyclotomic $p$-adic height is that of \cite[(4.1)]{SteinWuthrich}:
for $P \in E(\Q)$ lying in the formal group at $p$ and reducing to the
identity component of the N\'eron model at every bad prime,
\[
   \hat h_p(P) \;=\; 2\log_p\bigl(e(P)/\sigma_p(P)\bigr),
\]
with $\sigma_p$ the canonical $p$-adic sigma function of \cite{MST} and
$e(P)$ the positive square root of the denominator of $x(P)$, extended to
$E(\Q)$ by $\hat h_p(P) = m^{-2}\hat h_p(mP)$; the height of
\cite[(1.1)]{MST} is $-\hat h_p/2p$.  For such $P$, with $t_P$ the
formal-group parameter, $\sigma_p(t) = t + O(t^2)$ and
$x(P) = t_P^{-2}\cdot(\text{unit})$ give $v_p(e(P)) = v_p(t_P) = v_p(\sigma_p(P))$,
so $e(P)/\sigma_p(P)$ is a $p$-adic unit and $v_p(\hat h_p(P)) \ge 1$.
We write $\Reg_\fp = \Reg_p$ for the discriminant of the associated
pairing on $E(\Q)/E(\Q)_{\mathrm{tors}}$, the cyclotomic $\fp$-adic
regulator, and $\Reg_\gamma = \Reg_\fp/\log_p(1+p)^{r}$ for its
normalisation \cite[(4.4)]{SteinWuthrich}, $r$ the Mordell--Weil rank;
the generic value of $v_\fp(\Reg_\fp)$ is $r$.  $X = X(\Q_\infty)$ denotes the Pontryagin dual of
$\Sel_{p^\infty}(E/\Q_\infty)$, a finitely generated $\Lambda$-module.
For $\beta \in \Qbar^\times$ we write $\fp \nmid \beta$ to mean
$v_\fp(\iota_p(\beta)) = 0$, the valuation normalised so that
$v_\fp(p) = 1$.  For $x \in \Q_p^\times$ one has $v_\fp(x) = v_p(x)$:
the choice of $\fp$ over $p$ carries content only for algebraic
arguments, through $\iota_p$.

\subsection{The $p$-adic $L$-function and the anomalous primes}\label{ssec:definitions}

We recall the $p$-adic $L$-function and the two of its properties used
later.  Let $E/\Q$ be an elliptic curve, let $p$ be a prime of good
ordinary reduction for $E$, let $\alpha_p$ be the unit root of
$X^2 - a_pX + p$, and let $\Omega_E = \int_{E(\mathbb{R})} |\omega_E|$ be
the real N\'eron period of $E$, $\omega_E$ its N\'eron differential, as
in \cite[\S 3.1]{SteinWuthrich}.
Then $L_p(E,T) \in \Lambda \otimes \Q$, $\Lambda = \Z_p[[T]]$, denotes the
Mazur--Tate--Teitelbaum $p$-adic $L$-function \cite[\S I.13]{MTT},
normalised with respect to the N\'eron periods as in
\S\ref{ssec:notation}; it lies in $\Lambda$ under the hypothesis of
Remark~\ref{rmk:integrality}.  Its Taylor coefficients at $T = 0$ are
written $c_j(p)$:
\begin{equation}\label{eq:Lpjets}
   L_p(E,T) \;=\; \sum_{j \ge 0} c_j(p)\, T^j .
\end{equation}
Interpolation at the trivial character gives
\begin{equation}\label{eq:interp}
   c_0(p) \;=\; L_p(E,0)
   \;=\; \bigl(1 - \alpha_p^{-1}\bigr)^{2}\,\frac{L(E,1)}{\Omega_E} .
\end{equation}
This is the Proposition of \cite[\S I.14]{MTT}, whose $p$-adic multiplier
$e_p(\alpha,\chi)$ specialises at weight $2$, trivial character and
$j = 0$ to $(1-\alpha_p^{-1})^2$.  Writing $T^\iota := (1+T)^{-1} - 1$ for
the involution induced by $\gamma \mapsto \gamma^{-1}$, the $p$-adic
functional equation reads
\begin{equation}\label{eq:padicfe}
   L_p\bigl(E,T^\iota\bigr) \;=\; w(E)\,U(T)\,L_p(E,T),
   \qquad U \in \Lambda^\times, \quad U(0) = 1 .
\end{equation}
This is \cite[\S I.17, Cor.~2]{MTT} in the form \cite[(18.3)]{MTT},
transported from the variable $s$ to $T$: the involution $s \mapsto -s$
about the central point becomes $T \mapsto T^\iota$, and the factor
$\langle N\rangle^{-s}$ becomes the unit power series $U$, so
$U(0) = 1$.  Its sign is the $\mathrm{sign}_p$ of
\cite[\S I.18]{MTT}, which differs from the sign $w(E)$ of the
classical functional equation exactly when the multiplier $e_p$
vanishes \cite[\S I.18, Prop.]{MTT}; at good ordinary $p$ one has
$e_p = (1-\alpha_p^{-1})^2 \ne 0$, so the two signs agree.

One class of primes is excluded throughout, and we name it here.

\begin{definition}[Anomalous primes]\label{def:anomalous}
Let $E/\Q$ be an elliptic curve and let $p$ be a prime of good reduction
for $E$.  Then $p$ is \emph{anomalous} for $E$ if
\[
   a_p \equiv 1 \pmod p ;
\]
equivalently, since $\#\widetilde E(\F_p) = p + 1 - a_p$, if $p$
divides $\#\widetilde E(\F_p)$.
\end{definition}

The terminology is due to Mazur \cite{Mazur}, who first set out the significance of
these primes for the Iwasawa theory of $E$; \cite[\S 4]{Greenberg}
discusses them in that setting, and \cite[\S 3.3]{SteinWuthrich} works
a numerical example at one.

Non-anomalousness enters three arguments below, always through the same
consequence, that $\#\widetilde E(\F_p)$ is prime to $p$.  It makes the
multiplier $(1-\alpha_p^{-1})^{2}$ of \eqref{eq:interp} a $p$-adic unit,
so that dividing it out changes no valuation
(Proposition~\ref{prop:normalisation}).  It makes the local term at $p$
in Mazur's control theorem vanish, since
$\widetilde E(\F_p)[p^\infty] = 0$ (Step~4 of
Theorem~\ref{prop:consequence}).  And it makes the index of $E^\circ(\Q)$
in $E(\Q)$ prime to $p$, which is what extends the regulator estimate of
Proposition~\ref{prop:dictionary} from $E^\circ(\Q)$ to all of $E(\Q)$.

For a curve with complex multiplication by $\Z[i]$ at most one prime
is anomalous.

\begin{lemma}[{Anomalous primes of a curve with CM by $\Z[i]$}]\label{lem:noanomalous}
Let $E/\Q$ be an elliptic curve with complex multiplication by $\Z[i]$
and let $p$ be a prime of good reduction for $E$.
\begin{enumerate}
\item If $p \not\equiv 1 \pmod 4$, then $p \mid a_p$; in particular $p$
      is not anomalous for $E$.
\item If $p \equiv 1 \pmod 4$, then $a_p$ is even, and $p$ is anomalous
      for $E$ if and only if $p = 5$ and $a_5 = -4$.
\end{enumerate}
\end{lemma}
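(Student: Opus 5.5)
Both parts come from the structure of reduction of a CM curve at a prime that is inert or ramified, versus split, in $K=\Q(i)$, together with the Hasse bound. Since $E$ has CM by $\Z[i]$ it is a twist of $y^2=x^3-Dx$, and for $p$ of good reduction we have $p\ge 3$, so $p$ is odd. We treat the two congruence classes separately.

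\emph{Part (1).} If $p\equiv 3 \pmod 4$, then $p$ is inert in $\Z[i]$. Deuring's criterion then makes the reduction supersingular, so $p\mid a_p$. One can also see this directly: $x\mapsto -x$ permutes $\F_p$ and negates $x^3-Dx$, and $-1$ is a non-square mod $p$, so the number of points pairs off and $a_p=0$. If $a_p\equiv 1\pmod p$ held, then $p\mid 1$, which is impossible. So $p$ is not anomalous.

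\emph{Part (2).} If $p\equiv 1\pmod 4$, then $p$ splits and $a_p=\pi+\bar\pi$ with $\pi\bar\pi=p$ in $\Z[i]$. Write $\pi=a+bi$, so that $a_p=2a$ is even. (Alternatively, the full $2$-torsion of $y^2=x^3-Dx$ over $\F_p$ when $D$ is a square mod $p$ gives $4\mid \#\widetilde E(\F_p)$; but the $\pi+\bar\pi$ argument is cleaner and is the one to use.) Suppose $p$ is anomalous, so $a_p\equiv 1\pmod p$. The Hasse bound $|a_p|\le 2\sqrt p$ gives $|a_p-1|\le 2\sqrt p+1<p$ for $p\ge 7$; for $p=5$ the inequality fails, and that prime is treated separately below. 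So for $p\ge 7$ we must have $a_p=1$, contradicting evenness. For $p=5$, the even values $a_5\in\{-4,\dots,4\}$ with $a_5\equiv 1\pmod 5$ reduce to $a_5=-4$. Conversely, if $p=5$ and $a_5=-4$, then $a_5\equiv 1\pmod 5$, so $5$ is anomalous; this direction is immediate.

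The only delicate point is the fact $a_p=\pi+\bar\pi$, which the paper already records in \S\ref{ssec:notation} for split primes, and the supersingularity at inert primes. I expect neither to be an obstacle beyond citing Deuring, or using the elementary point count above.
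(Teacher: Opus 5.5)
Your proof is correct and follows the paper's argument: Deuring's criterion for part (1), $a_p=\pi+\bar\pi$ with $\pi\in\Z[i]$ for the evenness, and the Hasse bound to force $p=5$ and $a_5=-4$. One point needs tightening: you assert without justification that every prime of good reduction is odd. This is true, since $|d_K|=4$ divides $N=|d_K|\Nm\ff$, or you can avoid it as the paper does by applying Deuring's criterion to the ramified prime $2$ as well.
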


\begin{proof}
Write $K = \Q(i)$.  A prime $p \not\equiv 1 \bmod 4$ is inert or
ramified in $K$, so the reduction of $E$ at $p$ is supersingular, by
Deuring's reduction criterion \cite{Deuring},
\cite[Ch.~13, \S 4]{LangEF}; and supersingular means $p \mid a_p$.
Then $a_p \equiv 1 \bmod p$ is impossible, and (1) follows from
Definition~\ref{def:anomalous}.

Let now $p \equiv 1 \bmod 4$, so that $p$ splits: $p = \pi\bar\pi$ with
$\pi = u + vi$, and $\fp = (\pi)$.  The Grossencharacter $\psi_E$ of
\S\ref{ssec:katz} has infinity type $(1,0)$, so $\psi_E(\fp)$ is a
generator of $\fp$, hence one of $\pm\pi$ and $\pm i\pi$, and
$a_p = \psi_E(\fp) + \overline{\psi_E(\fp)}$ is the trace of that
generator: one of $\pm 2u$ and $\pm 2v$.  So $a_p$ is even.

By Definition~\ref{def:anomalous}, $p$ is anomalous exactly when
$a_p = 1 + kp$ for some integer $k$.  Such a $k$ must be nonzero (to avoid contradicting $a_p$ being even) so $|a_p| \ge p - 1$; with the Hasse bound
$|a_p| \le 2\sqrt p$ this gives $p - 2\sqrt p - 1 \le 0$, hence
$\sqrt p \le 1 + \sqrt 2$ and $p \le 5$.  As $p \equiv 1 \bmod 4$, only
$p = 5$ survives, and there $a_5$ even, $a_5 \equiv 1 \bmod 5$ and
$|a_5| \le 2\sqrt 5 < 5$ leave $a_5 = -4$ alone.  Conversely
$a_5 = -4$ gives $\#\widetilde E(\F_5) = 5 + 1 + 4 = 10$, which $5$
divides.
\end{proof}

The word \emph{jet}, used throughout, means only a truncated Taylor
expansion: the $m$-th jet of a power series is the polynomial formed
by its coefficients in degrees $\le m$; it involves none of the
differential-geometric apparatus of jet bundles or jet schemes.  Since
the vocabulary is not standard in Iwasawa theory, we fix it now.

\begin{definition}[Jets]\label{def:jet}
Let $F(T) = \sum_{j \ge 0} c_j T^j$ be a power series over a $p$-adic
ring and let $m \ge 0$.  The \emph{$m$-th jet of $F$ at $T = 0$} is the
truncation $\sum_{j \le m} c_j T^j$, and $c_m$ is its \emph{$m$-th
coefficient}.  The $m$-th jet of a measure, in a specified variable, is
the $m$-th jet of its Iwasawa power series in that variable.
\end{definition}

\noindent Two instances occur in this paper.
\begin{enumerate}
\item[(i)] \emph{Jets of $L_p(E,T)$ at $T = 0$}, with coefficients
$c_j(p)$ as in \eqref{eq:Lpjets}.  When $L(E,1) = 0$ and $w(E) = +1$
one has $c_0(p) = c_1(p) = 0$ (Lemma~\ref{lem:c0c1}), so the second jet
is $c_2(p)\,T^2$; we call it the \emph{central second jet}, and the
assertion ``the second jet is a unit'' means
$\tilde c_2(p) \in \Z_p^\times$ for the normalisation $\tilde c_2$ of
Definition~\ref{def:c2tilde}.
\item[(ii)] \emph{Jets of the Katz measure} at the central character,
taken in the cyclotomic direction: the jets of the one-variable slice
of $\mu_{\ff\fp^\infty}$ through that character (\S\ref{ssec:katz}),
which agree with those in (i) up to the explicit unit factors of
Lemma~\ref{lem:comparison}.
\end{enumerate}
\subsection{The Katz measure}\label{ssec:katz}
Let $K$ be imaginary quadratic, $\ff$ an integral ideal of $\OK$, and
$p = \fp\bar\fp \ge 5$ split in $K$ and prime to $6\Nm\ff$.  Katz
\cite{Katz} constructed a measure $\mu = \mu_{\ff\fp^\infty}$ on
$G_\infty = \Gal(K(\ff p^\infty)/K)$, valued in
\[
   W \;:=\; \widehat{\Z_p^{\mathrm{ur}}} ,
\]
the completion of the maximal unramified extension of $\Z_p$, with
maximal ideal $\mathfrak{m}_W$ and residue field $\overline{\F}_p$.  It
interpolates the critical Hecke
$L$-values of $K$: for $\chi$ of conductor dividing $\ff p^\infty$ and
infinity type $(k,j)$ with $0 \le -j < k$,
\begin{equation}\label{eq:katzinterp}
   \int_{G_\infty} \chi \, d\mu
   \;=\;
   (\ast) \cdot
   \frac{\Omega_p^{\,k - j}}{\Omega_\infty^{\,k-j}}
   \cdot
   \mathcal{E}_\fp(\chi)
   \cdot
   \Gamma\text{-factor}
   \cdot
   L(\chi^{-1}, 0),
\end{equation}
with $\Omega_\infty$ a period of a fixed elliptic curve $\mathcal{A}/K$ with
complex multiplication by $\OK$,\footnote{The notation here is more
general than the sequel needs.  Katz's construction takes for
$\mathcal{A}$ any elliptic curve over $K$ with complex multiplication by
$\OK$, with no reference to $E$.  Immediately below, $\mathcal{A}$ is
fixed with lattice $\OK$-proportional to that of $E$, hence isogenous to
$E$ over $\Qbar$; for the curve of Part~\ref{part:evidence} one takes
$\mathcal{A} = E$.} so that
$\mathcal{A}(\C) = \C/\Omega_\infty\OK$;
$\Omega_p$ the corresponding $p$-adic period (a unit in
$\widehat{\Z_p^{\mathrm{ur}}}$, from a trivialisation of the formal group
of the reduction); $\mathcal{E}_\fp(\chi)$ Euler-type factors at $\fp$
and $\bar\fp$; and $(\ast)$ explicit algebraic constants.  The construction
is that of \cite{Katz}, of which \cite[II \S4]{deShalit} is an exposition;
throughout, $\mu$ is taken in the normalisation of Bannai and Kobayashi
\cite[Def.~3.8]{BannaiKobayashi}, for which
\cite[Thm.~3.11]{BannaiKobayashi} gives every factor of
\eqref{eq:katzinterp} explicitly.  That normalisation is relative to a
complex number $\Omega$ with $\ff\,\Omega = \Omega_\infty\OK$, the period
lattice of $\mathcal{A}$; we fix a generator $f_0$ of $\ff$ and take
$\Omega = \Omega_\infty/f_0$.  It assumes that $p \nmid \Nm\ff$, that
$\mathcal{A}$ has a Weierstrass model over $\OK$ good at $\fp$ whose
invariant differential has period lattice $\Omega_\infty\OK$, and that
$1$ is the only unit of $\OK$ congruent to $1$ modulo $\ff$, which holds
when $\ff$ is the conductor of a Hecke character of infinity type
$(1,0)$.  Katz's construction
does not require class number one.  Here $K$ has class number one
(\S\ref{ssec:notation}), which is what allows a single $\mathcal{A}$ and a
single $\Omega_\infty$ to serve.

The connection with an elliptic curve is through its Grossencharacter:
for $E/\Q$ with CM by $\OK$ there is a Hecke character $\psi_E$ of $K$,
of infinity type $(1,0)$ and conductor $\ff$ with
$N = |d_K| \cdot \Nm\ff$, such that $L(E/\Q, s) = L(\psi_E, s)$; this is
a theorem of Deuring, and we follow \cite[Ch.~II]{deShalit} for the
conventions attaching $\psi_E$ to $E$.  For an integer $n \ge 1$ we
write
\[
   L_\ff(\bar\psi_E^{\,n}, s)
   \;:=\;
   \sum_{(\mathfrak{b},\,\ff) = 1}
   \frac{\bar\psi_E(\mathfrak{b})^{n}}{\Nm\mathfrak{b}^{\,s}} ,
\]
the sum over the integral ideals of $\OK$ prime to $\ff$, for the Hecke
$L$-function of $\bar\psi_E^{\,n}$ without the Euler factors at the
primes dividing $\ff$; it converges for $\mathrm{Re}\,s > n/2 + 1$ and
continues to an entire function of $s$ \cite[\S1.1]{BannaiKobayashi},
and $L(\bar\psi_E^{\,n}, s)$ denotes the primitive $L$-function.  That $\psi_E$ and that $\ff$ are
the character and
conductor in force throughout
\S\S\ref{sec:jet}--\ref{sec:criterion}, and $\mathcal{A}$ is taken with
lattice $\OK$-proportional to that of $E$.  The interpolation range of
\eqref{eq:katzinterp} includes the characters adjacent to the central
character of $\psi_E$, so the Mazur--Tate--Teitelbaum $p$-adic
$L$-function of a CM curve is, up to the explicit comparison factors of
Lemma~\ref{lem:comparison}, a one-variable slice of $\mu$.

The two sides of that comparison are normalised by different periods:
$L_p(E,T)$ by the real N\'eron period $\Omega_E$ of $E$, the measure
$\mu$ by the CM period $\Omega_\infty$ of \eqref{eq:katzinterp}.  Since
$\mathcal{A}$ and $E$ have $\OK$-proportional lattices they are isogenous over
$\Qbar$, so their N\'eron differentials differ by an algebraic factor and
the two periods differ by a nonzero algebraic number.

\begin{definition}[Period ratio]\label{def:periodratio}
Let $E/\Q$ have complex multiplication by $\OK$ and let $\Omega_\infty$
be as in \eqref{eq:katzinterp}.  The \emph{period ratio} of $E$ is the
number $\omega_E \in \Qbar^\times$ with
\[
   \Omega_E \;=\; \omega_E \cdot \Omega_\infty ,
\]
and $\operatorname{supp}(\omega_E)$ is the finite set of rational primes
lying below a prime of the number field $\Q(\omega_E)$ that divides the
numerator or the denominator of $\omega_E$.
\end{definition}

\noindent The set $\operatorname{supp}(\omega_E)$ depends on $E$
and on the choice of $\Omega_\infty$ made above, both of which are fixed
once and for all.  It is the set of primes at which the comparison of
Lemma~\ref{lem:comparison} can fail to be a $p$-adic unit.

\subsection{The Coates--Liang--Sujatha criterion}
\label{ssec:cls}
The criterion this paper builds on decides $\Sha(E/\Q)[p^\infty]$ at a
split prime from the $p$-adic valuation of one critical Hecke
$L$-value.  Let $E/\Q$ have complex multiplication by $\OK$, let
$\Omega_0 \in \C$ generate the period lattice $\Omega_0\OK$ of the
N\'eron differential of $E$ (for $\mathcal{A} = E$ in \S\ref{ssec:katz},
$\Omega_0 = \Omega_\infty$), let $\Omega^+_\infty$ be the least positive
real period of $E$, and let $\alpha(E) \in \OK$ be the nonzero element
with $\Omega^+_\infty = \alpha(E)\,\Omega_0$.  For a prime $p$ put
\begin{equation}\label{eq:cpplus}
   c_p^+(E) \;:=\; (\Omega^+_\infty)^{-p}\, L\bigl(\bar\psi_E^{\,p},\, p\bigr) .
\end{equation}

\begin{theorem}[Coates--Liang--Sujatha {\cite[Thm.~2.2]{CLS1}}]\label{thm:clscriterion}
Let $E/\Q$ be an elliptic curve with complex multiplication by the
maximal order $\OK$ of an imaginary quadratic field $K$, let
$g = \rank E(\Q)$ and $r = \ord_{s=1} L(E,s)$, and let $c_p^+(E)$ be as
in \eqref{eq:cpplus}.  Then $c_p^+(E) \in \Q$.  Let $p$ be a prime such
that
\begin{enumerate}
\item $E$ has good reduction at $p$;
\item $p$ is prime to the number of roots of unity in $K$;
\item $p$ splits in $K$;
\item $p \nmid \alpha(E)$;
\item $r \equiv g \pmod 2$.
\end{enumerate}
If $\ord_p\bigl(c_p^+(E)\bigr) < g + 2$, then $\Sha(E/K)[p^\infty]$ is
finite.  If $\ord_p\bigl(c_p^+(E)\bigr) = g$, $p \nmid 6$ and
$\#\widetilde E(\F_p)$ is prime to $p$, then $\Sha(E/K)[p^\infty] = 0$.
\end{theorem}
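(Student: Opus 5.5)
This is the theorem of Coates, Liang and Sujatha, quoted from \cite[Thm.~2.2]{CLS1}, so within the paper it is a cited input and is not reproved. If I had to reconstruct it, I would argue in the $\fp$-adic Iwasawa theory of $K$ along the one-variable $\Z_p$-extension $K(E[\fp^\infty])/K$. I would run the following steps in order.

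First, I would show $c_p^+(E)\in\Q$. The value $L(\bar\psi_E^{\,p},p)/\Omega_0^{\,p}$ lies in $K$ by Damerell's theorem. Complex conjugation sends $\bar\psi_E^{\,p}$ to $\psi_E^{\,p}$, and the two $L$-values agree because $E$ is defined over $\Q$. Replacing $\Omega_0$ by the real period $\Omega^+_\infty=\alpha(E)\Omega_0$ then makes the quotient fixed by conjugation, so it lies in $\Q$. Hypothesis (4), $p\nmid\alpha(E)$, ensures that changing periods does not alter $\ord_p$.

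Second, I would relate $c_p^+(E)$ to the characteristic power series of the $\fp^\infty$-Selmer group over $K_\infty=K(E[\fp^\infty])$. By Rubin's main conjecture \cite{Rubin}, the characteristic ideal of the dual Selmer module is generated by the Katz measure restricted to the relevant branch. The character $\bar\psi_E^{\,p}$ is congruent to $\bar\psi_E$ modulo $\fp$, since $\psi_E(\mathfrak b)^{p-1}\equiv 1\bmod\fp$. Hence evaluating the measure at this character amounts to evaluating the power series $f(T)$ at a point $T_0$ with $v_p(T_0)=1$, up to Euler factors that are units under (1)--(3).

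Third, I would bound $\ord_T f$ from below. Since $\Sha$ is finite if and only if the $\fp$-adic Selmer corank over $K$ equals $g$, a structure argument shows $\ord_{T=0}f\ge g$, with equality when $\Sha(E/K)[p^\infty]$ is finite and the $\fp$-adic height pairing is nondegenerate. Writing $f(T)=T^{m}u(T)$ with $m=\ord_{T=0}f$, evaluation at $T_0$ gives $\ord_p c_p^+\ge m$. The parity hypothesis (5), combined with the parity of $m$ coming from the functional equation, forces $m\equiv g\pmod 2$. Consequently, if $\Sha[p^\infty]$ were infinite, the extra Selmer corank would raise $m$ to at least $g+2$, and therefore $\ord_p c_p^+\ge g+2$. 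This contrapositive gives the first assertion.

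Fourth, for exact vanishing I would assume $\ord_p c_p^+=g$. Then $u(0)$ is a unit, so the Perrin-Riou--Schneider leading term, which is a regulator times $\#\Sha$ times Tamagawa and torsion factors, is a unit. The conditions $p\nmid 6$ and $p\nmid\#\widetilde E(\F_p)$ remove the local correction terms in the control theorem, and it follows that $\Sha(E/K)[p^\infty]=0$.

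I expect the main obstacle to be the third step. It requires making precise that evaluation at $T_0$ detects $\ord_{T=0}f$ exactly, with no cancellation between $u(T_0)$ and $p$-powers, and it requires the parity argument that rules out an excess of exactly one.
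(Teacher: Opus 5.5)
You are right that the paper does not prove this statement: it quotes \cite[Thm.~2.2]{CLS1}, recording only that the proof uses Rubin's main conjecture for $K$ and that $\ord_p(c_p^+(E)) \ge g$ always. Your skeleton follows that route. You use the main conjecture on the branch of $\Gal(K(E[\fp^\infty])/K)$ through $\kappa$, evaluate at $\kappa^p$ on the same branch with $v_p(T_0)=1$, and obtain the chain
\[
   \ord_p\bigl(c_p^+(E)\bigr) \;\ge\; m \;\ge\; s_\fp \;=\; g+t .
\]
Here $s_\fp$ and $t$ are the $\Z_p$-coranks of $\Sel_{\fp^\infty}(E/K)$ and $\Sha(E/K)[\fp^\infty]$. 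The second inequality holds because, via control, $s_\fp$ is the $\Z_p$-rank of the $\kappa^{-1}$-twisted coinvariants, which is at most the multiplicity $m$. The first inequality needs no exactness at $T_0$, only $u\in\Lambda$ and $v_p(T_0)\ge 1$. So the cancellation you single out as the main obstacle is not one.

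The genuine gap is the parity step. You assert $m \equiv g \pmod 2$ from ``the functional equation'', but there is no functional equation on this line. The functional equation of the Katz measure involves complex conjugation, which exchanges the $\fp$- and $\bar\fp$-directions. It therefore constrains parity only along self-dual lines such as the cyclotomic one (compare Lemma~\ref{lem:c0c1}), not along $\Gal(K(E[\fp^\infty])/K)$. Moreover, $m - s_\fp$ measures the degeneracy of the $\fp$-adic height pairing along that extension, and nothing makes it even. Without $m\equiv g$, $t \ge 1$ only gives $m \ge g+1$, and the first assertion does not follow.

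The parity that hypothesis (5) supplies belongs to $t$, not to $m$. Complex conjugation exchanges the $\fp$- and $\bar\fp$-parts of $\Sel_{p^\infty}(E/K)$, and $E$ is $\Q$-isogenous to its twist by $K$. Hence $s_\fp = \corank_{\Z_p}\Sel_{p^\infty}(E/\Q)$. The $p$-parity theorem for Selmer coranks (Greenberg in the CM ordinary case; Dokchitser--Dokchitser in general) gives $s_\fp \equiv r \pmod 2$. With (5), $t = s_\fp - g$ is even, so $t\ge1$ forces $t\ge2$ and $\ord_p(c_p^+(E))\ge g+2$.

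The second assertion needs no parity. The equality $\ord_p(c_p^+(E)) = g$ squeezes $m = s_\fp = g$, so $t=0$, the height pairing is nondegenerate, and $u(0)$ is a unit. Your Step~4 then finishes, or alternatively the structure argument of Steps~3--4 of Theorem~\ref{prop:consequence} transposed to the $\fp$-branch.
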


\noindent Always $\ord_p(c_p^+(E)) \ge g$ \cite[(15)]{CLS1}, and
Coates, Liang and Sujatha call $p$ \emph{exceptional} for $E$ when the
inequality is strict.  The last hypothesis says that $p$ is not
anomalous (Definition~\ref{def:anomalous}).  For odd $p$ the restriction
map $\Sha(E/\Q)[p^\infty] \to \Sha(E/K)[p^\infty]$ is injective, its
kernel being killed by $[K:\Q] = 2$, so the conclusion gives
$\Sha(E/\Q)[p^\infty] = 0$.  The proof uses Rubin's main conjecture for
$K$ \cite[Thm.~4.1]{Rubin}.  The criterion is decidable: $c_p^+(E)$ is,
up to an explicit factor, a trace from a ray class field of $K$ of a
division value of a derivative of $\wp$, and $\ord_p(c_p^+(E))$ is
settled by exact integer arithmetic \cite[\S 4]{CLS2}.  The value tested
is $L(\bar\psi_E^{\,p}, p)$, at the end $s = p$ of the critical strip
$1 \le s \le p$ of $\bar\psi_E^{\,p}$, and its weight moves with $p$.
Theorem~\ref{thm:criterion} is the cyclotomic counterpart: it tests three
critical values of $\bar\psi_E^{\,2p-1}$ (Corollary~\ref{cor:bracketL})
and decides the cyclotomic regulator as well as $\Sha$.

\subsection{Elliptic units and the rational-function structure}
\label{ssec:ellunits}
The measure is constructed from elliptic units.  Fix a CM elliptic curve
$\mathcal{A}/K$ with $\mathcal{A}(\C) = \C/\Gamma$ and
$\Gamma = \Omega_\infty\OK$ (the N\'eron differential giving the
identification), and $\mathfrak{a}$ an auxiliary
ideal prime to $6\ff p$.  The classical theta quotient
\[
   \Theta_{\mathfrak{a}}(z) \;=\;
   \frac{\Delta(\Gamma)}{\Delta(\mathfrak{a}^{-1}\Gamma)}
   \prod_{0 \ne t \in \mathfrak{a}^{-1}\Gamma/\Gamma}
   \bigl(\wp(z;\Gamma) - \wp(t;\Gamma)\bigr)^{-1}
\]
is a \emph{rational function} on $\mathcal{A}$, defined over $K$, whose
values at
torsion points of order prime to $\mathfrak{a}$ are elliptic units,
norm-coherent along the $\fp$-power tower.  Kummer theory and Coleman
calculus convert the coherent system
$\{\Theta_{\mathfrak{a}}(\text{$\ff\fp^n$-torsion})\}_n$ into $\mu$: on
each branch the Amice transform of $\mu$ is the expansion of
$\partial\log \Theta_{\mathfrak{a}}$ along the formal group at $\fp$, up
to the period trivialisation and translation by $\ff$-torsion
\cite[Ch.~II]{deShalit}.

\subsection{The algebraic theta function and Eisenstein--Kronecker
numbers}\label{ssec:BK}
Bannai--Kobayashi \cite{BannaiKobayashi} put this structure in a form
that gives the higher jets as well as the values; it is the form
\S\ref{sec:jet} uses.  For a lattice $\Gamma \subset \C$ with
$\C/\Gamma$ CM, put $A(\Gamma) := \mathrm{area}(\C/\Gamma)/\pi$ (an
invariant of the lattice; the CM curve of \S\ref{ssec:katz} is written
$\mathcal{A}$) and
$\langle z, w\rangle_\Gamma := \exp\bigl((z\bar w - w\bar z)/A(\Gamma)\bigr)$;
for $z_0, w_0 \in \C$ and integers $a \ge 0$, $b \ge 1$ the
Eisenstein--Kronecker--Lerch series
\[
   K^*_a(z_0, w_0, s; \Gamma)
   \;=\;
   \sideset{}{'}\sum_{\gamma \in \Gamma}
   \frac{(\bar z_0 + \bar\gamma)^a}{|z_0 + \gamma|^{2s}}
   \,\langle \gamma, w_0\rangle_\Gamma
   \qquad (\mathrm{Re}\, s > a/2 + 1)
\]
continues meromorphically in $s$.  Following \cite[\S1]{BannaiKobayashi}
we set
\[e^*_{a,b}(z_0, w_0; \Gamma) := K^*_{a+b}(z_0, w_0, b; \Gamma).\]
Two properties of the series are used below.  The primed sum omits
$\gamma = -z_0$ when $z_0 \in \Gamma$, so for $w_0 = 0$ the series, and
hence $e^*_{a,b}(z_0, 0; \Gamma)$, depends on $z_0$ only modulo
$\Gamma$.  And for $\lambda \in \C^\times$ the substitution
$\gamma \mapsto \lambda\gamma$ multiplies the numerator by
$\bar\lambda^{\,a+b}$ and the denominator, at $s = b$, by
$|\lambda|^{2b}$, while $A(\lambda\Gamma) = |\lambda|^{2}A(\Gamma)$
leaves the pairing unchanged; so, first where the series converges and
then by continuation,
\begin{equation}\label{eq:EKhomogeneity}
   e^*_{a,b}(\lambda z_0,\, \lambda w_0;\, \lambda\Gamma)
   \;=\;
   \bar\lambda^{\,a}\,\lambda^{-b}\,
   e^*_{a,b}(z_0,\, w_0;\, \Gamma)
   \qquad (\lambda \in \C^\times) .
\end{equation}
At torsion parameters these \emph{Eisenstein--Kronecker numbers} are
algebraic after division by the appropriate power of $A(\Gamma)$
(Damerell's theorem; \cite[Cors.~2.11--2.12]{BannaiKobayashi} in this
normalisation), and they exhaust the critical Hecke $L$-values of $K$:
every $L(\chi^{-1},0)$ in \eqref{eq:katzinterp} is an explicit finite
$\Qbar$-linear combination of them, at parameters determined by the
conductor of $\chi$.  Three of the structural theorems of
\cite{BannaiKobayashi} are used below, in the following forms; the
constants are left unspecified here and are made explicit where they
are used, in Lemmas~\ref{lem:comparison} and~\ref{lem:moments}.

\begin{enumerate}
\item[(BK1)] (\emph{Generating function.})  The Kronecker theta
function $\Theta(z, w) = \theta(z+w)/\theta(z)\theta(w)$ --- $\theta$
the reduced theta function of $(\C/\Gamma, dz)$, normalised as in
\cite[\S1.2]{BannaiKobayashi} --- is a meromorphic section whose Laurent
expansion at any pair of torsion points $(z_0, w_0)$ has coefficients
the $e^*_{a,b}(z_0, w_0)$, up to explicit factorials and periods; all
Eisenstein--Kronecker numbers, of all indices, are Taylor coefficients
of this \emph{one} two-variable function
\cite[Thms.~1.13, 1.17]{BannaiKobayashi}.
\item[(BK2)] (\emph{Algebraicity.})  $\Theta$, suitably translated and
trivialised, is defined over $\Qbar$: its expansion in algebraic
formal-group parameters at torsion points has algebraic coefficients
with bounded denominators \cite[Thm.~2.9, Cor.~2.11]{BannaiKobayashi}.
\item[(BK3)] (\emph{$p$-adic interpolation at ordinary primes.})  For
$\fp$ split (ordinary), the pullback of the translated $\Theta$ to
$\widehat{\mathcal{A}} \times \widehat{\mathcal{A}}$ at $\fp$, expanded in
the parameters
$(S, S')$ of the trivialised formal group, has $\fp$-integral
coefficients, and the measure on $\Z_p \times \Z_p$ attached to it by
Amice--Mahler is, up to the explicit torsion-translation bookkeeping,
the Katz measure $\mu$.  In particular the \emph{polynomial moments}
satisfy
\[
   \int_{\Z_p\times\Z_p} x^a y^b \, d\mu_{(z_0,w_0)}
   \;=\;
   c_{a,b}\cdot
   \Omega_p^{\,\bullet}\cdot
   \frac{e^*_{a', b'}(z_0, w_0)}{A(\Gamma)^{\bullet}\,
   \Omega_\infty^{\,\bullet}} ,
\]
with $c_{a,b}$ an explicit product of factorials, signs and unit Euler
factors, $(a',b')$ an explicit re-indexing of $(a,b)$, and explicit
exponents $\bullet$ \cite[Prop.~3.3, Thm.~3.7]{BannaiKobayashi}.
\end{enumerate}
The content of (BK3) is that the polynomial moments of $\mu$ are
Eisenstein--Kronecker numbers, $\fp$-integral once divided by the period
trivialisation.  Indexing conventions differ among references, and we
fix ours here.  Following \cite[Prop.~3.3]{BannaiKobayashi}, which
matches a moment of total degree $d$ with index sum $a + b = d+1$, we
call $a+b$ the \emph{weight} of $e^*_{a,b}$; the moment of bidegree
$(k,l)$ pairs with $e^*_{l,\,k+1}$.
\subsection{Gillard's theorem and proof}
\label{ssec:gillardproof}
Gillard's theorem is the precedent for Question~\ref{q:fixed}: a
single statement about the Katz measure that holds at every split
prime.  Its proof is discussed in \S\ref{ssec:gillardmech}.

\begin{theorem}[Gillard {\cite[Th.~2.9]{Gillard}}]\label{thm:gillard}
Let $K$ be an imaginary quadratic field, let $\ff$ be an integral ideal
of $\OK$, and let $p = \fp\bar\fp \ge 5$ be a prime split in $K$ and
prime to $6\Nm\ff$.  Let $\mu_{\ff\fp^\infty}$ be the Katz measure
\cite{Katz} on $G_\infty = \Gal(K(\ff p^\infty)/K)$: the
$\widehat{\Z_p^{\mathrm{ur}}}$-valued measure interpolating the algebraic
parts of the critical Hecke $L$-values $L(\chi^{-1},0)$ of $K$, over the
characters $\chi$ of conductor dividing $\ff p^\infty$ and infinity type
$(k,j)$ with $0 \le -j < k$.  Then the $\mu$-invariant of every
branch of $\mu_{\ff\fp^\infty}$ vanishes: each of the finitely many power
series obtained by restricting $\mu_{\ff\fp^\infty}$ to a character of the
torsion subgroup of $G_\infty$ has a coefficient that is a $p$-adic unit.
\end{theorem}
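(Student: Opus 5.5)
Gillard's theorem is stated here only as a precedent, so I will sketch how its proof runs rather than reprove it in full. The model is the Ferrero--Washington--Sinnott argument for Dirichlet $L$-functions.

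\emph{Step 1: reduce to an equidistribution statement.} By \S\ref{ssec:ellunits}, on each branch the Amice transform of $\mu_{\ff\fp^\infty}$ is, up to the unit period trivialisation $\Omega_p$ and translation by $\ff$-torsion, the expansion along the formal group $\widehat{\mathcal{A}}$ at $\fp$ of $\partial\log\Theta_{\mathfrak a}$. Here $\Theta_{\mathfrak a}$ is a rational function on $\mathcal{A}$ defined over $K$. Fix a character $\omega$ of the torsion subgroup of $G_\infty$. The branch power series is then a finite $\omega$-twisted sum, over the torsion points $t$ indexing the cosets, of translates $\Theta_{\mathfrak a}(z + t)$ expanded in the parameter $S$. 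Vanishing of $\mu$ means this sum is nonzero modulo $\mathfrak m_W$. I will first pass from $\partial\log$ to $\log$ and to the function itself: a measure with $\mu$-invariant zero is equivalent to its logarithmic primitive not being a $p$-th power modulo $\mathfrak m_W$, up to the trivial part. Then I will rewrite the branch, in the style of Sinnott, as $\sum_{t} \omega(t)\, R(\,[u_t](S) \oplus t\,)$. Here $R$ is the reduction of $\Theta_{\mathfrak a}$ to the reduced curve $\widetilde{\mathcal A}$ over $\overline{\F}_p$. The $u_t$ run over representatives in $\OK_\fp^\times$ of a set of distinct cosets modulo the torsion roots of unity.

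\emph{Step 2: the algebraic-independence input.} The key lemma is an analogue of Sinnott's. Let $R_1,\dots,R_m$ be rational functions on $\widetilde{\mathcal A}$, and let $u_1,\dots,u_m \in \Z_p^\times$ be $\Z$-linearly independent modulo torsion. Suppose $\sum_i R_i([u_i]S)$ is a rational function of $S$. Then each $R_i$ is constant modulo functions of a degenerate form. The proof uses that $\widetilde{\mathcal A}$ is ordinary, so $\widehat{\widetilde{\mathcal A}} \cong \widehat{\mathbb G}_m$ after trivialisation. The endomorphisms $[u]$ are then the automorphisms $S \mapsto (1+S)^u - 1$. Poles of a rational function sit at torsion points, and multiplication by independent $u_i$ sends a pole to infinitely many distinct points unless it cancels. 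This is Schneider's and Gillard's transplant of Sinnott's argument, with the torus replaced by the formal group of the ordinary reduction.

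\emph{Step 3: equidistribution.} To choose the $u_t$ so that the independence hypothesis holds, I will use the equidistribution, modulo $\mathfrak m_W$, of the images of $\Z$-independent $p$-adic units in $\OK/\fp^n$. This is the Ferrero--Washington input: the digits of independent algebraic numbers are equidistributed. Gillard replaces it with the elementary Sinnott version, which only needs $\OK_\fp^\times$ modulo $\langle$roots of unity$\rangle$ to be infinite.

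\emph{Step 4: conclude.} Suppose the branch were $\equiv 0$. Steps 2 and 3 would then force the reduced $\Theta_{\mathfrak a}$ to be a constant times a $p$-th power, twisted trivially. But its divisor is $12(\cdot$ the $\mathfrak a$-torsion$) - 12\Nm\mathfrak a\,(0)$, modified by the $\Delta$-quotient normalisation. This divisor has multiplicities prime to $p$ because $p \nmid 6\Nm\ff$ and $\mathfrak a$ is prime to $6\ff p$, which gives a contradiction. I expect Step 2 to be the main obstacle. The delicate point is controlling torsion translations by $\ff$-points together with the character $\omega$ inside Sinnott's lemma. I would handle this first by decomposing according to the finite group of translations, and then applying the lemma to each isotypic piece.
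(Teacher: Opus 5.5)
Your outline (one fixed rational function built from $\Theta_{\mathfrak a}$, an identity over $\overline{\F}_p$ forced by a positive $\mu$-invariant, and a contradiction from the explicit divisor) is the one the paper gives in \S\ref{ssec:gillardmech}. But the lemma of your Step~2 is false as stated, and Step~3 does not rescue it.

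\emph{Step~2: independence over $\Z$ is the wrong notion on a CM curve.} We have $\mathrm{End}(\widetilde{\mathcal A}) = \OK$, and for $\alpha \in \OK \subset \OK_\fp = \Z_p$ the formal endomorphism $[\alpha]$ is the expansion of an algebraic endomorphism of $\widetilde{\mathcal A}$. Take $\alpha \in \OK \setminus \Z$ prime to $\fp$ and any $u \in \Z_p^\times$. The units $u$ and $\alpha u$ are $\Z$-linearly independent, yet $R\circ[\alpha u] - (R\circ[\alpha])\circ[u] = 0$ for every nonconstant $R$ on $\widetilde{\mathcal A}$. The statement you need, which is the role of Gillard's linear-independence theorem in the paper's account, is relative to $K$: if $u_i/u_j \notin K$ for $i \ne j$ and $\sum_i R_i\circ[u_i]$ is constant in $\overline{\F}_p[[S]]$, then every $R_i$ is constant. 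The hypothesis must be ``constant''; ``a rational function of $S$'' is meaningless, since the parameter $S$ trivialising the formal group is not a function on $\widetilde{\mathcal A}$.

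Nor is the proof a matter of $[u_i]$ moving poles. For $u \notin K$, $[u]$ is an endomorphism of the formal group only, not a morphism of $\widetilde{\mathcal A}$. Instead, write the $u_i$ as $\OK$-combinations of $K$-linearly independent $d_1,\dots,d_m \in \Z_p$. The Zariski closure of $S \mapsto ([d_1]S,\dots,[d_m]S)$ in $\widetilde{\mathcal A}^m$ is an abelian subvariety, hence everything, because proper abelian subvarieties lie in kernels of nonzero $\OK$-linear maps $\widetilde{\mathcal A}^m \to \widetilde{\mathcal A}$. The identity therefore becomes one among rational functions $R_i\circ\phi_i$ on $\widetilde{\mathcal A}^m$. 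Their polar divisors are translates of the pairwise distinct codimension-one subvarieties $\ker\phi_i$, so they cannot cancel, and completeness of $\widetilde{\mathcal A}^m$ makes each $R_i\circ\phi_i$ constant.

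\emph{Step~3: equidistribution is neither needed nor usable.} The multipliers are not yours to choose. In Sinnott's reduction they are the Teichm\"uller representatives $\zeta \in \mu_{p-1} \subset \OK_\fp^\times$, the tame part of the local units at $\fp$, and they are forced by the branch decomposition. Your $u_t$, ``representatives modulo the torsion roots of unity'', have this backwards. The $\ff$-torsion translates, the twisting character on them, and the rotations by $\mu_K$ are all algebraic operations on $\widetilde{\mathcal A}$. They are simply absorbed into the rational functions $R_i$, which also disposes of the ``delicate point'' you flag. The remaining $\zeta$, taken modulo $\mu_K$, have pairwise ratios in $\mu_{p-1}\setminus\mu_K$, hence outside $K$, so the hypothesis of the correct lemma holds automatically. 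No equidistribution of digits enters anywhere: replacing the Ferrero--Washington equidistribution by this algebraic independence is precisely what Sinnott's method, and Gillard's transplant of it, achieves.

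\emph{Steps~1 and~4.} The detour through logarithms and $p$-th powers in Step~1 is unnecessary. $\partial\log\Theta_{\mathfrak a}$, the logarithmic derivative along the invariant differential, is itself a rational function on $\mathcal A$. Its poles are simple, with residues the divisor multiplicities. With the repair above, your Step~4 is the right finish, with one correction. What is needed is that the grouped, twisted combination of translates of $\partial\log\Theta_{\mathfrak a}$ keeps a pole modulo $\fp$. That comes from the multiplicity at the nonzero $\mathfrak a$-torsion points being prime to $p \ge 5$, not from $p \nmid \Nm\ff$. The multiplicity at the origin may well be divisible by $p$.
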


\noindent For $K$ of class number one the measure of the theorem is the
$\mu$ of \eqref{eq:katzinterp}.  The same result was obtained independently by Schneps \cite{Schneps}, and both
proofs descend from Ferrero--Washington \cite{FerreroWashington} and Sinnott
\cite{Sinnott} for Dirichlet $L$-functions.  The $\mu$-invariants of $p$-adic
Hecke $L$-functions have since been determined in wider settings by Hida
\cite{Hida}, Finis \cite{Finis} and Hsieh \cite{Hsieh}, and at primes inert in
$K$ by Burungale--He--Kobayashi--Ota \cite{BHKO}; each of these determines a
$\mu$-invariant.

\section{Proof of Theorem~\ref{intro:consequence}}\label{sec:thmA}

Throughout this section $E/\Q$ has CM by the maximal order $\OK$ of an
imaginary quadratic field $K$, with
\[\rank E(\Q) = 2, \qquad L(E,1) = 0, \qquad w(E) = +1,\]
and $p = \fp\bar\fp \ge 5$ is a good prime for $E$ that splits in
$\OK$; every numbered statement below restates the hypotheses it uses.  Such a $p$ is ordinary, by Deuring's reduction criterion
\cite{Deuring}, \cite[Ch.~13, \S 4]{LangEF} (which says that for a curve with CM by
$\OK$ and good reduction at $p$, the reduction is ordinary if $p$
splits in $K$ and supersingular if $p$ is inert or ramified).

We write $L_p(E,T) \in \Lambda \otimes \Q$ for the
Mazur--Tate--Teitelbaum $p$-adic $L$-function of $E$ with $j$-th Taylor coefficient $c_j(p)$ at $T = 0$; the two properties of
$L_p$ used in this section are the interpolation formula
\eqref{eq:interp} and the functional equation \eqref{eq:padicfe}.

\subsection{The normalised second jet}\label{ssec:jet}

\begin{lemma}\label{lem:c0c1}
Assume $L(E,1) = 0$ and $w(E) = +1$.  Then for every good ordinary $p$,
\[
   c_0(p) = c_1(p) = 0 .
\]
\end{lemma}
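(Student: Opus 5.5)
The plan is to read $c_0(p)$ off the interpolation formula \eqref{eq:interp} and then get $c_1(p)$ from the functional equation \eqref{eq:padicfe}. No Iwasawa theory or CM input is needed here, only these two quoted properties of $L_p(E,T)$.

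First, \eqref{eq:interp} gives
\[
c_0(p) = L_p(E,0) = \bigl(1-\alpha_p^{-1}\bigr)^2 \, \frac{L(E,1)}{\Omega_E},
\]
and this is $0$ since $L(E,1)=0$.

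Second, expand both sides of \eqref{eq:padicfe} to first order in $T$. Since $T^\iota = (1+T)^{-1}-1 = -T + T^2 - \cdots$, we have $T^\iota \equiv -T \bmod T^2$. Because $c_0(p)=0$, the left side is
\[
L_p(E,T^\iota) = c_1(p)\,T^\iota + O(T^2) = -c_1(p)\,T + O(T^2).
\]
On the right side, $w(E)=+1$ and $U(0)=1$ give
\[
U(T)\,L_p(E,T) = c_1(p)\,T + O(T^2),
\]
again using $c_0(p)=0$. Comparing the coefficients of $T$ yields $-c_1(p) = c_1(p)$. Hence $2c_1(p)=0$, and so $c_1(p)=0$, since $L_p(E,T)$ lies in $\Lambda\otimes\Q$, which has characteristic zero.

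The only step needing care is the legitimacy of the substitution $T\mapsto T^\iota$ in a power series. This is justified because $T^\iota \in T\,\Z_p[[T]]$, so the substitution defines a ring endomorphism of $\Z_p[[T]]$, and after scaling by a denominator also of $\Lambda\otimes\Q$, which preserves the $T$-adic filtration. The sign identification $\mathrm{sign}_p = w(E)$ at good ordinary $p$ was already recorded after \eqref{eq:padicfe}, so nothing further is needed.
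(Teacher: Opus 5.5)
Your proposal is correct and is essentially the paper's proof: $c_0(p)=0$ from the interpolation formula \eqref{eq:interp}, and comparing the coefficients of $T$ on the two sides of \eqref{eq:padicfe} (using $T^\iota \equiv -T \bmod T^2$, $w(E)=+1$ and $U(0)=1$) gives $-c_1(p)=c_1(p)$, hence $c_1(p)=0$ because $2$ is invertible. Your added remark on why the substitution $T\mapsto T^\iota$ is legitimate is a harmless extra detail that the paper leaves implicit.
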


\begin{proof}
By \eqref{eq:interp}, $c_0(p) = (1-\alpha_p^{-1})^2 L(E,1)/\Omega_E$,
which vanishes because $L(E,1) = 0$.  So
$L_p(E,T) = c_1T + c_2T^2 + \cdots$, and it remains to show
$c_1 = 0$.  Substitute $T^\iota = (1+T)^{-1} - 1 = -T + T^2 - \cdots$
into this expansion: every term $c_jT^j$ with $j \ge 2$ contributes
only in degrees $\ge 2$, so
\[
   L_p\bigl(E,T^\iota\bigr) \;=\; -c_1T + O(T^2).
\]
On the other side of \eqref{eq:padicfe}, $w(E) = +1$ and $U(0) = 1$
give
\[
   w(E)\,U(T)\,L_p(E,T) \;=\; c_1T + O(T^2).
\]
Comparing the coefficients of $T$ yields $-c_1 = c_1$, and $2$ is
invertible, so $c_1 = 0$.
\end{proof}

The $p$-adic Birch--Swinnerton-Dyer conjecture of
Mazur--Tate--Teitelbaum \cite[Ch.~II, \S 10]{MTT} predicts
\begin{equation}\label{eq:padicbsd}
   c_2(p)
   \;=\;
   \bigl(1 - \alpha_p^{-1}\bigr)^{2} \cdot
   \frac{\Reg_p}{\log_p(1+p)^{2}} \cdot
   \#\Sha(E/\Q)[p^\infty] \cdot
   \frac{\prod_v c_v}{(\#E(\Q)_{\mathrm{tors}})^{2}}.
\end{equation}
in the form given by \cite[Conj.~5.1]{SteinWuthrich}, whose
normalisations of $L_p$ and of the cyclotomic $p$-adic regulator are
the ones fixed in \S\ref{ssec:notation}; their conjecture carries the
full order $\#\Sha(E/\Q)$, whose prime-to-$p$ part is a $p$-adic unit,
and the display is its $p$-primary form.  Of the factors on the right
only $\Reg_p$ and $\#\Sha(E/\Q)[p^\infty]$
concern us; the others --- a $p$-adic multiplier, the Tamagawa
numbers, the torsion --- have nothing to do with $\Sha$, and we divide
them out once and for all.

\begin{definition}\label{def:c2tilde}
For $p$ as above, with $\alpha_p$ the unit root of $X^2 - a_pX + p$,
set
\[
   \tilde c_2(p)
   \;:=\;
   c_2(p)\cdot
   \bigl(1 - \alpha_p^{-1}\bigr)^{-2}\cdot
   \frac{(\#E(\Q)_{\mathrm{tors}})^2}{\prod_v c_v} .
\]
\end{definition}

At a split prime of good reduction that is not anomalous
(Definition~\ref{def:anomalous}) and does not
divide the Tamagawa or torsion terms, the factors just divided out are
themselves $p$-adic units, so the normalisation does not change the valuation.

\begin{proposition}[Normalisation]\label{prop:normalisation}
Let $p = \fp\bar\fp$ be a split prime of good reduction with
$p \nmid \prod_v c_v \cdot \#E(\Q)_{\mathrm{tors}}$, and suppose $p$ is
not anomalous.  Then
\[
   v_\fp\bigl(\tilde c_2(p)\bigr) \;=\; v_\fp\bigl(c_2(p)\bigr) ;
\]
in particular $\tilde c_2(p) \in \Z_p^\times$ if and only if
$v_\fp\bigl(c_2(p)\bigr) = 0$.
\end{proposition}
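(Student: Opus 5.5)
The plan is to show that each factor dividing $c_2(p)$ into $\tilde c_2(p)$ in Definition~\ref{def:c2tilde} is a $p$-adic unit. The valuation of a product is the sum of the valuations, and for elements of $\Q_p$ one has $v_\fp = v_p$ (\S\ref{ssec:notation}). So the claim reduces to checking two things: $(1-\alpha_p^{-1})$ is a unit, and $(\#E(\Q)_{\mathrm{tors}})^2/\prod_v c_v$ is a unit.

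The second factor is a nonzero rational number. Its numerator and denominator are integers, and by hypothesis $p$ divides neither of them. Hence its $p$-adic valuation is $0$.

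For the first factor, note that $p$ is split and of good reduction, so it is ordinary by Deuring's criterion (as recalled in \S\ref{sec:thmA}). Thus $X^2 - a_pX + p$ has a unit root $\alpha_p\in\Z_p^\times$ and a second root $\beta_p = p/\alpha_p$ with $v_p(\beta_p) = 1$. Then
\[
(1-\alpha_p)(1-\beta_p) = 1 - a_p + p = \#\widetilde E(\F_p).
\]
By Definition~\ref{def:anomalous}, non-anomalousness means $p \nmid \#\widetilde E(\F_p)$, so the right-hand side is a unit. Since $1-\beta_p \equiv 1 \pmod p$ is a unit, it follows that $1-\alpha_p$ is a unit. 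Finally, $1-\alpha_p^{-1} = -\alpha_p^{-1}(1-\alpha_p)$, and $\alpha_p^{-1}$ is a unit, so $1-\alpha_p^{-1}$ is a unit as well.

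Combining the two factors gives $v_\fp(\tilde c_2(p)) = v_\fp(c_2(p))$. The "in particular" clause then follows because $\tilde c_2(p)\in\Q_p$: it lies in $\Z_p^\times$ exactly when its valuation is $0$.

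There is no real obstacle here. The only step needing care is the unit-root computation: one must check that non-anomalousness rules out $\alpha_p\equiv 1 \pmod p$. The identity $(1-\alpha_p)(1-\beta_p)=\#\widetilde E(\F_p)$ does exactly this.
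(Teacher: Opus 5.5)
Your proposal is correct and follows the paper's proof essentially step for step: the factor $(\#E(\Q)_{\mathrm{tors}})^2/\prod_v c_v$ is a unit by hypothesis, and the factorisation $\#\widetilde E(\F_p) = (1-\alpha_p)(1-\beta_p)$ with $1-\beta_p \in \Z_p^\times$ makes $1-\alpha_p^{-1}$ a unit at a non-anomalous prime. The only difference is that you invoke Deuring's criterion explicitly to get the unit root, which the paper takes from its standing notation.
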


\begin{proof}
By Definition~\ref{def:c2tilde}, $\tilde c_2(p)$ is $c_2(p)$ times
$(\#E(\Q)_{\mathrm{tors}})^2/\prod_v c_v$ and
$(1-\alpha_p^{-1})^{-2}$.  The first factor is a $p$-adic unit by
hypothesis.  For the second, let $\beta_p$ be the second root of
$X^2 - a_pX + p$, so that $\alpha_p\beta_p = p$ with $\alpha_p$ a unit;
then $v_p(\beta_p) = 1$ and $1 - \beta_p \in \Z_p^\times$.  Since
$\#\widetilde E(\F_p) = p + 1 - a_p = (1-\alpha_p)(1-\beta_p)$ and
$\alpha_p$ is a unit,
\[
   v_p\bigl(1 - \alpha_p^{-1}\bigr) \;=\; v_p(\alpha_p - 1)
   \;=\; v_p\bigl(\#\widetilde E(\F_p)\bigr),
\]
which is $0$ because $p$ is not anomalous.  So
$(1-\alpha_p^{-1})^{-2} \in \Z_p^\times$ as well, and $\tilde c_2(p)$
and $c_2(p)$ differ by a $p$-adic unit.  The last assertion follows
because an element of $\Q_p^\times$ of valuation $0$ lies in
$\Z_p^\times$.
\end{proof}

With this normalisation \eqref{eq:padicbsd} reads
\[\tilde c_2(p) = \Reg_\gamma \cdot \#\Sha(E/\Q)[p^\infty] .\]
The generic valuation of $\Reg_p$ is $2$, so the generic expectation is
$\tilde c_2(p) \in \Z_p^\times$: unit regulator, trivial $\Sha$.  For CM
curves at split $p \notin S_E$ (the set \eqref{eq:Sexc} below) this
identity is a theorem up to $p$-adic
units (Proposition~\ref{prop:padicbsdunits}).

\begin{remark}\label{rmk:integrality}
Integrality of $c_2(p)$, hence of $\tilde c_2(p)$ up to the finitely
many bad factors, holds for all split $p$ outside an explicit finite
set.  It follows from integrality of the modular symbols of $E$, which
holds at every good ordinary $p > 2$ for which $\bar\rho_{E,p}$ is
irreducible: this is \cite[Prop.~3.7]{GreenbergVatsal} with trivial
twisting character; the elliptic-curve form is
\cite[Prop.~3.7]{SteinWuthrich}, whose ambient \S 3 assumes $E$ non-CM,
a hypothesis the underlying result does not carry.  And
$\bar\rho_{E,p}$ is reducible exactly when $E$ admits a rational
$p$-isogeny, which happens for only finitely many $p$ \cite{Mazur78}.
It is used in Step~1 of the proof of Theorem~\ref{prop:consequence}.
\end{remark}

\subsection{The two comparisons}\label{ssec:comparisons}

The next lemma compares the two $p$-adic $L$-functions in play: the
Mazur--Tate--Teitelbaum $L_p(E,T)$ of \eqref{eq:Lpjets}, normalised by
the N\'eron period, and the cyclotomic slice of the Katz measure of
\S\ref{ssec:katz}, normalised by $\Omega_\infty$.  It names the finite set
of primes at which the two can differ by more than a $p$-adic unit, which
is one of the five constituents of the excluded set \eqref{eq:Sexc}
defined in the next subsection.

\begin{lemma}[Comparison of $L_p$ with the Katz branch]\label{lem:comparison}
Let $E/\Q$ have CM by $\OK$ and conductor $N$, let
$p = \fp\bar\fp \ge 5$ be a good split prime, and let
$L^{\mathrm{Katz}}_\fp(T)$ be the restriction of the Katz measure
$\mu_{\ff\fp^\infty}$ of \S\ref{ssec:katz} to the cyclotomic line
through the central character of $\psi_E$.  Then
\[
   L_p(E, T) \;=\; c_p \cdot u(T)\cdot L^{\mathrm{Katz}}_\fp(T),
   \qquad c_p \;=\; \bigl(f_0\,\omega_E\,\Omega_p\bigr)^{-1},
\]
where $f_0$ is the generator of $\ff$ fixed in \S\ref{ssec:katz},
$\omega_E$ is the period ratio of Definition~\textup{\ref{def:periodratio}},
$\Omega_p \in W^\times$ is the $p$-adic period of \eqref{eq:katzinterp},
$W = \widehat{\Z_p^{\mathrm{ur}}}$, and $u(T) \in \Lambda^\times$ is a unit
power series with $u(0) = 1$.
Moreover $v_p(c_p) \;=\; 0$ for every split $p \notin S_{\mathrm{cmp}}$, where
\[
   S_{\mathrm{cmp}} \;:=\;
   \{\, p : p \mid 6N\,\Nm\ff \,\} \cup \operatorname{supp}(\omega_E)
\]
is a finite set ($\omega_E$ being the period ratio of
Definition~\textup{\ref{def:periodratio}}).
\end{lemma}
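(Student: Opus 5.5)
The plan is to compare the two sides through their interpolation properties and then invoke uniqueness of bounded measures. Both $L_p(E,T)$ and $c_p\,L^{\mathrm{Katz}}_\fp(T)$ are elements of $\Lambda\otimes\Q$, the latter after extending scalars to $W$. An element of $W[[T]]\otimes\Q$ is determined by its values at the infinitely many finite-order characters $\chi$ of $\Gamma$, that is, at $T=\zeta-1$ for $\zeta$ running over the $p$-power roots of unity. It therefore suffices to compute both sides at these characters and show that the ratio of the values is the specialisation of a single unit power series $u$ with $u(0)=1$.

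\textbf{Step 1: the Mazur--Tate--Teitelbaum side.} By \cite[\S I.14]{MTT}, for $\chi$ of conductor $p^n$ one has
\[
   L_p(E,\zeta-1)=e_p(\alpha,\chi)\,\frac{L(E,\chi^{-1},1)}{\Omega_E},
\]
where $e_p(\alpha,\chi)$ is $\alpha^{-n}$ times a Gauss sum factor. For $E$ with CM, $L(E,\chi^{-1},s)=L(\psi_E\cdot(\chi^{-1}\circ\Nm),s)$.

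\textbf{Step 2: the Katz side.} Evaluate \eqref{eq:katzinterp} at the character $\bar\psi_E\cdot(\chi\circ\Nm)$ or its conjugate, whichever lies on the cyclotomic line through the central character and has infinity type $(1,0)$, with $k=1$ and $j=0$. The formula of \cite[Thm.~3.11]{BannaiKobayashi} gives the same Hecke $L$-value. It comes multiplied by $\Omega_p/\Omega$, with $\Omega=\Omega_\infty/f_0$, and by Euler factors at $\fp$ and $\bar\fp$.

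\textbf{Step 3: matching the two.} Substitute $\Omega_E=\omega_E\Omega_\infty$ from Definition~\ref{def:periodratio}; this converts $\Omega_\infty/f_0$ into the constant $c_p=(f_0\omega_E\Omega_p)^{-1}$. Next, match the Euler factors and Gauss sums. At $\fp$ the factor $(1-\psi_E(\fp)\chi(p)/p)$ and its $\bar\fp$ counterpart should reproduce $e_p$: here $\alpha=\psi_E(\bar\fp)$ or $\psi_E(\fp)$ under $\iota_p$, according to which is the unit, and the Gauss sums agree up to factors of the form $\chi(\text{fixed element})$. A factor $\chi(a)$ with $a\in 1+p\Z_p$ is the value at $\zeta-1$ of the power series $(1+T)^{\log_p a/\log_p(1+p)}$, a unit in $\Lambda$ equal to $1$ at $T=0$. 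This step identifies $u(T)$ as a product of such series. The same bookkeeping absorbs the Euler factors at primes dividing $\ff$ and the differing branch or torsion normalisations, provided $p\nmid 6N\Nm\ff$.

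\textbf{Step 4: the valuation statement.} $\Omega_p\in W^\times$ holds by construction, and $f_0$ is a unit at $p$ when $p\nmid\Nm\ff$. Moreover $\omega_E$ is a $\fp$-unit when $p\notin\operatorname{supp}(\omega_E)$. Hence $v_p(c_p)=0$ outside $S_{\mathrm{cmp}}$, and $S_{\mathrm{cmp}}$ is finite because $\omega_E$ is a nonzero algebraic number.

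The main obstacle is Step 3, the exact matching of the interpolation factors. One must check that every discrepancy between $e_p(\alpha,\chi)$ together with the Gauss sum on one side, and $\mathcal E_\fp(\chi)$, $(\ast)$ and the $\Gamma$-factor on the other, is either constant or of the form $\chi(a)$ with $a$ a principal unit. Otherwise $u$ would fail to be a unit, or $c_p$ would pick up extra factors. My first attempt would be to follow de Shalit's comparison \cite[II \S5]{deShalit} of the two-variable measure with the cyclotomic one, adapted to the Bannai--Kobayashi normalisation.
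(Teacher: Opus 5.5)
\textbf{The gap is in Step~3.} Your overall route is the paper's. You evaluate both sides at the nontrivial finite-order characters $\chi$ of $\Gamma$ and identify the ratio as $(f_0\omega_E\Omega_p)^{-1}$ times a factor of the form $\chi(\text{fixed element})$ coming from the Gauss sums. You then absorb that factor into $u(T)=(1+T)^{-m(b)}$ and conclude by uniqueness. Your Step~4 matches the paper's valuation argument. Step~3, however, is left open, and the mechanism you sketch for it would not work.

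\textbf{Why the Euler-factor matching fails.} A nontrivial character of $\Gamma$ of order $p^n$ is a Dirichlet character of conductor $p^{n+1}$. So $\chi(p)=0$, and $\chi\circ\Nm$ is ramified at both $\fp$ and $\bar\fp$. Factors such as $(1-\psi_E(\fp)\chi(p)/p)$ and their $\bar\fp$ counterparts are therefore identically $1$. They cannot reproduce the multiplier, which at such $\chi$ is $p^{n+1}\alpha_p^{-(n+1)}/\tau(\bar\chi)$. On the Katz side this factor comes from the ramified local terms at $\fp$ and $\bar\fp$, not from Euler factors.

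\textbf{How the paper closes the step.} The paper does not assemble those terms by hand. It quotes \cite[Cor.~3.12]{BannaiKobayashi}, the specialisation of their Thm.~3.11 to $\psi_E\chi$, which is already in the Mazur--Tate--Teitelbaum shape:
\[
  L^{\mathrm{Katz}}_\fp(\chi)\;=\;\Omega_p\,\frac{p^{n+1}}{\alpha_p^{\,n+1}}\,\frac{L(E,\bar\chi,1)}{\tau_\zeta(\bar\chi)\,\Omega},\qquad \Omega=\Omega_\infty/f_0 .
\]
Compared with the Mazur--Tate--Teitelbaum formula, only two discrepancies remain:
\begin{itemize}
\item the periods, which give $(f_0\omega_E\Omega_p)^{-1}$ once you substitute $\Omega_E=\omega_E\Omega_\infty$;
\item the compatible system of $p$-power roots of unity in the Gauss sum. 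If $\zeta'=\zeta^{\,b}$ with $b\in\Z_p^\times$, then $\tau_{\zeta'}(\bar\chi)=\chi(b)\,\tau_\zeta(\bar\chi)$. This is exactly the $\chi(\text{fixed element})$ you anticipated.
\end{itemize}
Nothing at the primes dividing $\ff$ needs absorbing either. The character $\psi_E\chi$ is ramified at every such prime, so the imprimitive and primitive $L$-values coincide.

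With that interpolation formula in hand, Step~3 is a two-line computation. Without it, or an equivalent explicit treatment of the ramified local factors at $\fp$ and $\bar\fp$, your proposal does not yet show that the ratio of the two sides has the form $c_p\,\chi(b)^{-1}$.
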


\begin{proof}
Let $\chi$ be a character of $\Gamma$ of finite order $p^{n} > 1$, viewed
also as a Dirichlet character of conductor $p^{n+1}$ and, through the
norm, as a character of $G_\infty$; and for a compatible system $\zeta$
of $p$-power roots of unity in $\overline{\Q}_p$ let $\tau_\zeta(\bar\chi)$
be the Gauss sum of $\bar\chi$ formed with it.  Two interpolation formulae
are in play.  On the Katz side, \cite[Cor.~3.12]{BannaiKobayashi}, with
$\Omega = \Omega_\infty/f_0$ the number of \S\ref{ssec:katz}, gives
\[
   L^{\mathrm{Katz}}_\fp(\chi)
   \;=\; \int_{G_\infty} \hat\psi\cdot(\chi\circ\Nm)\, d\mu
   \;=\; \Omega_p\,\frac{p^{n+1}}{\alpha_p^{\,n+1}}\,
   \frac{L(E,\bar\chi,1)}{\tau_\zeta(\bar\chi)\,\Omega}
\]
for the system $\zeta$ fixed there, where $\hat\psi$ is the $p$-adic
avatar of $\psi_E$, the continuous character of $G_\infty$ with
$\hat\psi(\sigma_{\mathfrak a}) = \psi_E(\mathfrak a)$ for every ideal
$\mathfrak a$ prime to $\ff p$, so that the left-hand side is the value
at $\chi$ of the restriction named in the statement (\S\ref{sec:criterion},
\eqref{eq:katzbranch}); the corollary is stated with a real
period $\Omega_E^+$ on both sides of its identity, which cancels, and its
proof specialises \cite[Thm.~3.11]{BannaiKobayashi} to $\psi_E\chi$ and
uses no hypothesis on the period ratio.  On the Mazur--Tate--Teitelbaum
side, \cite[\S I.14]{MTT} in the form \cite[\S 3.2]{SteinWuthrich} gives
\[
   L_p(E,\chi) \;=\; \frac{p^{n+1}}{\alpha_p^{\,n+1}}\,
   \frac{L(E,\bar\chi,1)}{\tau_{\zeta'}(\bar\chi)\,\Omega_E}
\]
for the system $\zeta'$ obtained from $e^{2\pi i/p^{n+1}}$ through $\iota_p$.
Two compatible systems differ by a fixed $b \in \Z_p^\times$,
$\zeta' = \zeta^{\,b}$, and then $\tau_{\zeta'}(\bar\chi) = \chi(b)\,\tau_\zeta(\bar\chi)$.
Since $\Omega_E = \omega_E\Omega_\infty$,
\[
   L_p(E,\chi) \;=\; \bigl(f_0\,\omega_E\,\Omega_p\bigr)^{-1}\,\chi(b)^{-1}\,
   L^{\mathrm{Katz}}_\fp(\chi)
\]
at every such $\chi$.  Both sides are bounded measures on $\Gamma$, hence
determined by their values at infinitely many characters
\cite[\S 7.6]{MSD}, and $\chi \mapsto \chi(b)^{-1}$ is the value at $\chi$
of the unit power series $u(T) = (1+T)^{-m(b)}$,
$m(b) = \log_p b/\log_p(1+p) \in \Z_p$, which has $u(0) = 1$.  This proves
the identity with $c_p = (f_0\omega_E\Omega_p)^{-1}$.  As $\Omega_p$ is a
unit of $W$, $v_p(c_p) = 0$ if and only if $f_0\omega_E$ is a $p$-adic
unit; $f_0$ generates $\ff$, so it is one for $p \nmid \Nm\ff$, and
$\omega_E$ is one for $p \notin \operatorname{supp}(\omega_E)$.  Hence
$v_p(c_p) = 0$ off $S_{\mathrm{cmp}}$.
\end{proof}

Rubin's form of the cyclotomic main conjecture, quoted in Step 2 of
Theorem~\ref{prop:consequence} below, is stated for the $p$-adic
$L$-series that Mazur and Swinnerton-Dyer construct from the modular
symbols of a Weil parametrisation \cite[\S\S 8--9]{MSD}, while every
$L$-function in this paper is normalised as in \cite{MTT,SteinWuthrich}
(\S\ref{ssec:notation}).  The two constructions differ by a fixed
rational constant, because the $p$-adic $L$-functions of isogenous curves
differ by a constant \cite[\S 3]{GreenbergVatsal}, so their ideals
in $\Lambda$ could differ at finitely many $p$.  The next lemma
identifies the constant and shows that the two ideals agree at every
good ordinary $p > 2$ at which $\bar\rho_{E,p}$ is irreducible.  The
isogeny $\pi$ it requires exists whenever $\bar\rho_{E,p}$ is
irreducible (since a rational $p$-isogeny anywhere in the isogeny class
would make the semisimplification of $\bar\rho_{E,p}$, hence
$\bar\rho_{E,p}$ itself, reducible); when $E$ is itself the strong Weil
curve of its isogeny class, $\pi$ is the identity.

\begin{lemma}[Comparison of the \cite{MSD} and \cite{MTT}
normalisations]\label{lem:msdmtt}
Let $E/\Q$ be an elliptic curve of conductor $N$, and let $p > 2$ be a
prime of good ordinary reduction for $E$ at which $\bar\rho_{E,p}$ is
irreducible.  Let $E_0$ be the strong Weil curve of the isogeny class
of $E$, let $\varphi_0 \colon X_0(N) \to E_0$ be its optimal
parametrisation, let $\pi \colon E_0 \to E$ be an isogeny of degree
prime to $p$, and let $\mathcal{L}_{\mathrm{MSD}} \in \Lambda$ be the
$p$-adic $L$-series of \cite[\S\S 8--9]{MSD} attached to the
parametrisation $\pi \circ \varphi_0$ of $E$ and to the unit root
$\alpha_p$.  Then
\[
   \mathcal{L}_{\mathrm{MSD}}\,\Lambda \;=\; \bigl(L_p(E,T)\bigr) .
\]
\end{lemma}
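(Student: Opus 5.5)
The plan is to compare the two $p$-adic $L$-functions through their interpolation properties, exactly as in the proof of Lemma~\ref{lem:comparison}, and then to show that the constant relating them is a $p$-adic unit. The \cite{MSD} series $\mathcal{L}_{\mathrm{MSD}}$ attached to $\pi\circ\varphi_0$ interpolates the twisted values $L(E,\bar\chi,1)$ divided by the period $\Omega_{\pi\circ\varphi_0}$ of the pullback of $\omega_E$ along $\pi\circ\varphi_0$. It uses the same multiplier $p^{n+1}\alpha_p^{-(n+1)}$ and the same Gauss sum normalisation, up to a choice of root of unity system that is absorbed as in Lemma~\ref{lem:comparison}. By Manin's theorem, $(\pi\circ\varphi_0)^*\omega_E = c\cdot 2\pi i f(z)\,dz$, where $c\in\Q^\times$ is the Manin constant of this parametrisation. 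The period of $\omega_E$ under the \cite{MSD} normalisation is therefore $c^{-1}$ times the modular period, up to a rational factor coming from the real component count. Comparing interpolation formulas at infinitely many finite-order characters, and using that a bounded measure is determined by them \cite[\S 7.6]{MSD}, should give
\[
\mathcal{L}_{\mathrm{MSD}} = \kappa \cdot u(T)\cdot L_p(E,T),
\]
with $\kappa\in\Q^\times$ explicit in terms of $c$, $\deg\pi$ and the ratio $[E(\mathbb{R}):E(\mathbb{R})^0]$, and $u\in\Lambda^\times$.

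It then remains to show that $v_p(\kappa)=0$. Each factor is handled separately.

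\emph{Degree of $\pi$.} This is prime to $p$ by hypothesis.

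\emph{Components.} The index $[E(\mathbb{R}):E(\mathbb{R})^0]\le 2$ is prime to $p>2$.

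\emph{Manin constant.} For the optimal parametrisation $\varphi_0$, the Manin constant divides a power of $2$ times primes of additive reduction and is in any case known to be prime to odd good primes $p$ (Mazur; Abbes--Ullmo for $p\nmid N$). Composing with $\pi$ of degree prime to $p$ changes the pulled-back differential by a factor whose $p$-part is trivial: $\pi^*\omega_E$ is an integral multiple of $\omega_{E_0}$, and the dual isogeny shows that the multiple divides $\deg\pi$.

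The step I expect to be the main obstacle is this last control of the Manin constant at $p$ for the composite $\pi\circ\varphi_0$. The first thing I would try is to reduce to $\varphi_0$ via the dual isogeny as above, and to cite the $p$-integrality of the Manin constant for $p\nmid 2N$. An alternative route is \cite[Prop.~3.7]{GreenbergVatsal}: irreducibility of $\bar\rho_{E,p}$ makes the integral modular symbols of $E$ generate a lattice with no $p$-divisibility in the \cite{MTT} normalisation. Both $\mathcal{L}_{\mathrm{MSD}}$ and $L_p(E,T)$ then lie in $\Lambda$ with $\mu$-invariants related by $v_p(\kappa)$. Irreducibility also gives that the two periods agree $p$-adically up to a unit, since any $p$-divisibility of the ratio would produce a rational $p$-isogeny \cite[\S 3]{GreenbergVatsal}.

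Once $\kappa\in\Z_p^\times$, the displayed identity gives equality of principal ideals $\mathcal{L}_{\mathrm{MSD}}\Lambda=(L_p(E,T))$, as claimed.
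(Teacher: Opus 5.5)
Your proposal is correct and follows essentially the paper's argument: compare the two constructions at nontrivial finite-order characters to get $\mathcal{L}_{\mathrm{MSD}}$ equal to $L_p(E,T)$ times a power of the Manin constant of $\pi\circ\varphi_0$, a power of $2$ and a $p$-adic unit; then show that this Manin constant is prime to $p$, using Mazur's theorem for $\varphi_0$ and the degree-prime-to-$p$ isogeny for $\pi$. The only variation is in that last step: you bound the factor $c_\pi$ by the dual-isogeny relation $c_\pi c_{\hat\pi} = \deg\pi$, whereas the paper uses that $\pi$ induces an isomorphism of formal groups over $\Z_p$; both work, and the Greenberg--Vatsal alternative you sketch is not needed.
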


\begin{proof}
Write $\varphi = \pi \circ \varphi_0$ and let $f$ be the newform of
$E$.  The series $\mathcal{L}_{\mathrm{MSD}}$ is $c_\varphi$ times the
Mellin transform of the measure that \cite[\S 8]{MSD} attaches to the
$T_p$-eigenfunction $r \mapsto \varphi_*\{r\}^+ \in H_1(E,\Z)^+ \cong
\Z$, where $\{r\}$ is the modular-symbol path from $0$ to $r$ and
$c_\varphi$ is the constant with $\varphi^*\omega_E = c_\varphi \cdot 2\pi i\,f\,dz$
\cite[(2.1), (9.1)]{MSD}.  For $r = a/p^{n+1}$ the cusp $r$ is
$\Gamma_0(N)$-equivalent to $0$, its denominator being prime to $N$, so
$\{r\}$ is a closed cycle on $X_0(N)$ and $\varphi_*\{r\}$ lies in
$H_1(E,\Z)$ \cite[\S 6, (4)]{MSD}; the Riemann sums of the measure are
therefore integers divided by powers of the unit $\alpha_p$
\cite[\S 8.1, Lemma~1]{MSD}, and $\mathcal{L}_{\mathrm{MSD}} \in \Lambda$.  The
series $L_p(E,T)$ is built in the same way from
$[r]^+ = \lambda^+(r)/\Omega_E$, the period integrals of
$2\pi i\,f\,dz$ along paths from $i\infty$, normalised by the real
N\'eron period (\S\ref{ssec:notation}).

An Iwasawa function is determined by its values at the nontrivial
finite-order characters \cite[\S 7.6]{MSD}, and on both sides the
value at a character $\chi$ of conductor $p^{n+1}$ is one and the same
power of $\alpha_p^{-1}$ times the sum
$\sum_a \chi^{-1}(a)\cdot(\text{symbol at } a/p^{n+1})$
(\cite[\S\S 8.3, 9.7]{MSD}; \cite[\S I.13]{MTT}).  The two symbol
systems are proportional up to an additive constant: the paths differ
by the fixed arc $\{0 \to i\infty\}$, whose contribution to the sum
vanishes because $\sum_a \chi^{-1}(a) = 0$ for nontrivial $\chi$; and
pairing $\varphi_*\{r\}^+$ with the N\'eron differential $\omega_E$
gives
$\varphi_*\{r\}^+ = c_\varphi\,2^{a}\,[r]^+ + \text{constant}$ for a
fixed $a \in \Z$ depending only on the real components of
$E(\mathbb{R})$.  Comparing values at all nontrivial $\chi$, and
recalling the factor $c_\varphi$ in $\mathcal{L}_{\mathrm{MSD}}$,
\[
   \mathcal{L}_{\mathrm{MSD}} \;=\; u \cdot c_\varphi^{\,2} \cdot L_p(E,T),
\]
with $u$ a $p$-adic unit (a power of $2$, a sign, and a power of
$\alpha_p$), so the two ideals agree if and only if
$v_p(c_\varphi) = 0$.

To show $v_p(c_\varphi) = 0$: write $c_\varphi = c_{\varphi_0}\,c_\pi$
with $\pi^*\omega_E = c_\pi\,\omega_{E_0}$.  Mazur's theorem
\cite[Cor.~4.1]{Mazur78} gives
$p \mid c_{\varphi_0} \Rightarrow p^2 \mid 4N$, and $p$ is odd of good
reduction, so $p \nmid c_{\varphi_0}$.  Both curves have good reduction
at $p$ and $p \nmid \deg\pi$, so $\pi$ induces an isomorphism on formal
groups over $\Z_p$ and carries N\'eron differentials to unit multiples:
$v_p(c_\pi) = 0$.
\end{proof}

\noindent Any parametrisation of $E$ is $\pi' \circ \varphi_0$ for an
isogeny $\pi' \colon E_0 \to E$, and multiplies
$\mathcal{L}_{\mathrm{MSD}}$ by a nonzero rational constant, prime to
$p$ whenever $\deg\pi'$ is; under the hypotheses of the lemma the ideal
$\mathcal{L}_{\mathrm{MSD}}\Lambda$ is therefore the same for every
parametrisation whose degree ratio to $\varphi_0$ is prime to $p$.  The
lemma also gives $L_p(E,T) \in \Lambda$ under its hypotheses, since
$\mathcal{L}_{\mathrm{MSD}} \in \Lambda$ and $c_\varphi$ is a $p$-adic unit
there; this is Remark~\ref{rmk:integrality} again.

\subsection{The excluded set}\label{ssec:excluded}

We define
\begin{equation}\label{eq:Sexc}
   S_E \;:=\; S_{\mathrm{bad}} \cup S_{\mathrm{an}} \cup S_{\mathrm{red}}
   \cup S_{\mathrm{cmp}} \cup S_{\mathrm{cl}},
\end{equation}
where
\[
\begin{aligned}
   S_{\mathrm{bad}} &\;:=\; \bigl\{\, p \;:\;
      p \mid 6N \cdot {\textstyle\prod_v c_v} \cdot
      \#E(\Q)_{\mathrm{tors}} \cdot d_K \,\bigr\}, \\
   S_{\mathrm{an}}  &\;:=\; \bigl\{\, p \;:\;
      p \text{ is anomalous for } E \,\bigr\}, \\
   S_{\mathrm{red}} &\;:=\; \bigl\{\, p \;:\;
      \bar\rho_{E,p} \text{ is reducible} \,\bigr\}, \\
   S_{\mathrm{cmp}} &\;:=\; \text{the set of Lemma~\ref{lem:comparison}}, \\
   S_{\mathrm{cl}}  &\;:=\; \bigl\{\, p \;:\;
      p \mid \#\mathrm{Cl}_\ff(K) \,\bigr\}.
\end{aligned}
\]
`Anomalous' is as in Definition~\ref{def:anomalous} and
$\mathrm{Cl}_\ff(K)$ is the ray class group of $K$ modulo $\ff$. It depends only on $E$ and on the
choice of $\Omega_\infty$ fixed in \S\ref{ssec:katz}

For a CM curve over $\Q$ the sets $S_{\mathrm{bad}}$,
$S_{\mathrm{red}}$, $S_{\mathrm{cmp}}$ and $S_{\mathrm{cl}}$ are all
finite; only $S_{\mathrm{an}}$ can fail to be.
Whether $S_E$ contains a split prime at all is a question about the
individual curve.

\begin{proposition}[The anomalous primes]\label{prop:anomalous}
Let $E/\Q$ have CM by the maximal order $\OK$ of an imaginary quadratic
field $K$, let $S_{\mathrm{an}}$ be the set of primes of good reduction
that are anomalous for $E$, and let $S_E$ be the set \eqref{eq:Sexc}.
\begin{enumerate}
\item Every $p \in S_{\mathrm{an}}$ with $p > 5$ is split in $K$, has
      $a_p = 1$, and satisfies $4p - 1 = |d_K|\,f^2$ for some odd
      integer $f$.  Hence fewer than $\sqrt{4x/|d_K|} + 3$ elements of
      $S_{\mathrm{an}}$ lie below $x$, and $S_E$ has density zero among
      the primes.
\item If $|d_K| \not\equiv 3 \pmod 8$, that is if
      $|d_K| \in \{4, 7, 8\}$, then $S_{\mathrm{an}} \subseteq \{2, 3, 5\}$
      and $S_E$ is finite.
\item If the Lang--Trotter conjecture \cite{LangTrotter} holds for $E$
      with a nonzero constant at the value $a_p = 1$, then
      $S_{\mathrm{an}}$ is infinite; by \textup{(2)} this requires
      $|d_K| \equiv 3 \pmod 8$.
\end{enumerate}
\end{proposition}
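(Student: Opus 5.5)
The plan is to work entirely with the arithmetic of $a_p$ for a CM curve, exactly as in Lemma~\ref{lem:noanomalous}, and to take the finiteness of the other constituents of $S_E$ as already noted after \eqref{eq:Sexc}.

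\emph{Part (1).} Let $p > 5$ be anomalous, so $a_p \equiv 1 \pmod p$. If $p$ were inert or ramified in $K$, Deuring's criterion would make the reduction supersingular, so $p \mid a_p$, which contradicts $a_p \equiv 1$. Hence $p$ is split. The Hasse bound gives $|a_p| \le 2\sqrt p < p - 1$ for $p > 5$; indeed for $p \ge 7$ we have $p - 1 - 2\sqrt p > 0$. So $a_p = 1 + kp$ forces $k = 0$, that is $a_p = 1$.

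Now write $\psi_E(\fp) = \pi \in \OK$ with $\pi\bar\pi = p$ and $\pi + \bar\pi = a_p = 1$. The discriminant of $X^2 - X + p$ is $1 - 4p = (\pi - \bar\pi)^2$. Since $\pi - \bar\pi \in \sqrt{d_K}\,\Z$ up to the usual half-integral normalisation, $\pi - \bar\pi = f\sqrt{d_K}$ with $f \in \Z$ when $\pi \in \Z[\tfrac{1+\sqrt{d_K}}2]$, and so $4p - 1 = |d_K| f^2$. As $4p - 1$ is odd, $f$ is odd.

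For the count, anomalous primes $p > 5$ below $x$ inject into odd $f \ge 1$ with $|d_K| f^2 < 4x$, giving at most $\sqrt{4x/|d_K|}$ of them. Adding $2, 3, 5$ gives fewer than $\sqrt{4x/|d_K|} + 3$. This is $O(\sqrt x) = o(x/\log x)$, so $S_{\mathrm{an}}$ has density zero. Since the remaining four constituents of $S_E$ are finite, $S_E$ has density zero as well.

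\emph{Part (2).} With $f$ odd, $f^2 \equiv 1 \pmod 8$, so $|d_K| \equiv 4p - 1 \equiv 3 \pmod 8$ because $p$ is odd. If $|d_K| \not\equiv 3 \pmod 8$, no anomalous $p > 5$ exists, so $S_{\mathrm{an}} \subseteq \{2,3,5\}$ and $S_E$ is finite. The translation to $|d_K| \in \{4,7,8\}$ uses the class-number-one list $|d_K| \in \{3,4,7,8,11,19,43,67,163\}$ (\S\ref{ssec:notation}); reducing each modulo $8$ leaves exactly $4, 7, 8$ outside the residue $3$.

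\emph{Part (3).} Lang--Trotter for CM curves predicts $\#\{p \le x : a_p = 1\} \sim C\sqrt x/\log x$. With $C \ne 0$ this tends to infinity, and each such $p \ge 5$ of good reduction is anomalous, so $S_{\mathrm{an}}$ is infinite. By (2) this forces $|d_K| \equiv 3 \pmod 8$.

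The only delicate step is the identity $4p - 1 = |d_K| f^2$ when $d_K$ is even, that is $|d_K| \in \{4, 8\}$, where $\OK = \Z[\sqrt{d_K}/2]$. I would handle it by direct computation. For $\pi = u + v\sqrt{d_K/4}$, the condition $a_p = 2u = 1$ is impossible, so there are no anomalous $p > 5$ at all in those fields. This is consistent with (2) and with Lemma~\ref{lem:noanomalous}.
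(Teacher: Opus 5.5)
Your proof is correct and follows the paper's argument step for step: Deuring's criterion forces $p$ to split, the Hasse bound forces $a_p = 1$, the discriminant of $X^2 - X + p$ gives $4p-1 = |d_K|f^2$ with $f$ odd, the bound $f < \sqrt{4x/|d_K|}$ gives the count, reducing modulo $8$ over the class-number-one list gives (2), and Lang--Trotter gives (3). The only difference is in how $4p - 1 = |d_K|f^2$ is obtained: the paper takes $f$ to be the conductor of the order $\Z[\psi_E(\fp)]$, which covers even $d_K$ with no separate case, whereas you compute in coordinates on $\OK$ and treat $|d_K| \in \{4,8\}$ separately by observing that $2u = 1$ is impossible. (In fact $\pi - \bar\pi \in \Z\sqrt{d_K}$ holds for every $d_K$, so your separate case is consistent with the uniform argument.)
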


\begin{proof}
(1) Let $p > 5$ be anomalous.  If $p$ is inert or ramified in $K$, the
reduction is supersingular by Deuring's criterion, recalled at the start
of this section, so $p \mid a_p$, and $a_p \equiv 1 \bmod p$ is
impossible.  So $p = \fp\bar\fp$ splits, and
$a_p = \psi_E(\fp) + \overline{\psi_E(\fp)}$ with $\psi_E(\fp)$ a
generator of $\fp$, as $K$ has class number one.  Write $a_p = 1 + kp$.
The Hasse bound $|a_p| \le 2\sqrt p$ excludes $k \ge 1$ and $k \le -2$
for every $p$, and $k = -1$ unless $p \le 5$; so $a_p = 1$.  Then
$\psi_E(\fp)$ is a root of $x^2 - x + p$, whose discriminant $1 - 4p$
is negative, so $\Q(\psi_E(\fp)) = K$ and $1 - 4p = d_K f^2$ with $f$
the conductor of the order $\Z[\psi_E(\fp)]$; that is
$4p - 1 = |d_K| f^2$.  As $4p - 1 \equiv 3 \bmod 4$, $f$ is odd.  Each
$f$ determines at most one $p$, and $p \le x$ forces
$f < \sqrt{4x/|d_K|}$, which with $2$, $3$ and $5$ gives the count.
The other four constituents of \eqref{eq:Sexc} are finite, so $S_E$
too has fewer than a constant multiple of $\sqrt x$ elements below $x$,
while the number of primes below $x$ exceeds a constant multiple of
$x/\log x$ (Chebyshev's bound); hence
$\#\{p \in S_E : p \le x\}/\pi(x) \to 0$, which is density zero among
the primes.

(2) With $f$ odd, $f^2 \equiv 1 \bmod 8$, so $|d_K| \equiv 4p - 1
\equiv 3 \bmod 8$ for every $p > 5$ in $S_{\mathrm{an}}$.  Among the
class-number-one discriminants this fails exactly for
$|d_K| \in \{4, 7, 8\}$, and then $S_{\mathrm{an}} \subseteq \{2, 3, 5\}$
and $S_E$ is finite.

(3) The conjecture asserts that the number of primes $p \le x$ with
$a_p = 1$ is asymptotic to $C\sqrt x/\log x$ for a constant
$C = C_{E,1} \ge 0$ attached to $E$.  If $C > 0$ there are infinitely
many such $p$, and each of them is anomalous, so $S_{\mathrm{an}}$ is
infinite; by (2), $|d_K| \equiv 3 \bmod 8$.
\end{proof}

\noindent Part~(2) contains Lemma~\ref{lem:noanomalous} for CM by
$\Z[i]$, and the split primes outside $S_E$ have density $\tfrac12$ in
every case.  For the six remaining class-number-one fields,
$|d_K| \equiv 3 \bmod 8$, anomalous primes do occur: the curve
$y^2 = x^3 + 2$, with CM by $\Z[\zeta_3]$, has $a_p = 1$ at $p = 61$,
$331$, $547$, $2437$ and $3571$, and the Lang--Trotter conjecture
predicts infinitely many for that curve; no case of the conjecture with
$a_p \ne 0$ is known.  The curve of Part~\ref{part:evidence} has CM by
$\Z[i]$.

\subsection{What the unit condition gives}\label{ssec:consequences}

\begin{theorem}[$=$ Theorem~\ref{intro:consequence}]\label{prop:consequence}
Let $E/\Q$ have CM by the maximal order $\OK$ of an imaginary
quadratic field $K$, with $\rank E(\Q) = 2$, $L(E,1) = 0$ and $w(E) = +1$.  Let
$p = \fp\bar\fp \ge 5$ be a split prime of good reduction with
$p \notin S_E$ and $\tilde c_2(p) \in \Z_p^\times$.  Then:
\begin{enumerate}
\item $\mu_{\mathrm{an}}(\fp) = 0$ and $\lambda_{\mathrm{an}}(\fp) = 2$;
\item $\corank_{\Z_p}\Sel_{p^\infty}(E/\Q) = 2$;
\item $\Sha(E/\Q)[p^\infty] = 0$.
\end{enumerate}
\end{theorem}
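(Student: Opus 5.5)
The plan is to pass from the analytic hypothesis to the algebraic side through Rubin's main conjecture, and then to descend from $\Q_\infty$ to $\Q$ by a structure argument on $X = X(\Q_\infty)$ instead of the Schneider--Perrin-Riou formalism. \emph{Step 1 (analytic invariants).} By Lemma~\ref{lem:c0c1}, $L_p(E,T) = c_2(p)T^2 + c_3(p)T^3 + \cdots$. Since $p \notin S_E$, Proposition~\ref{prop:normalisation} applies, so $v_p(c_2(p)) = 0$. Remark~\ref{rmk:integrality} gives $L_p(E,T) \in \Lambda$ because $p \notin S_{\mathrm{red}}$. By Weierstrass preparation, $L_p(E,T) = T^2 \cdot v(T)$ with $v(0) = c_2(p) \in \Z_p^\times$, so $v \in \Lambda^\times$. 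This gives $\mu_{\mathrm{an}}(\fp) = 0$ and $\lambda_{\mathrm{an}}(\fp) = 2$, which is (1).

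\emph{Step 2 (main conjecture).} Rubin's theorem \cite[Thm.~4.1]{Rubin} shows that $X$ is $\Lambda$-torsion, with characteristic ideal generated by $\mathcal{L}_{\mathrm{MSD}}$. For this we would descend from $K$ to $\Q$ using the $\Gal(K/\Q)$-eigenspace decomposition, which is valid since $p$ is odd. Lemma~\ref{lem:msdmtt} applies because $\bar\rho_{E,p}$ is irreducible, and it identifies this ideal with $(L_p(E,T)) = (T^2)$. Hence $\mathrm{char}_\Lambda X = (T^2)$. Since $\mu = 0$, $X$ has no $\Z_p$-torsion up to finite submodules, and $X$ is finitely generated over $\Z_p$ of rank $2$. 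Its structure is pseudo-isomorphic to either $\Lambda/T^2$ or $(\Lambda/T)^2$.

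\emph{Step 3 (rank lower bound and the structure argument).} Since $\rank E(\Q) = 2$, $E(\Q)\otimes\Q_p/\Z_p$ embeds in $\Sel_{p^\infty}(E/\Q)$. Through the restriction map, $X_\Gamma = X/TX$ therefore has $\Z_p$-rank at least $2$. We then compare this with $\mathrm{char}\,X = (T^2)$. If $X \sim \Lambda/T^2$, then $X_\Gamma$ would have rank $1$, which is a contradiction. So $X$ is pseudo-isomorphic to $(\Lambda/T)^2$, $T$ acts semisimply at $0$, and $\mathrm{rank}_{\Z_p} X_\Gamma = 2$.

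The crux is sharpening this to an exact statement. We must show that $X$ has no nonzero finite $\Lambda$-submodule and that $X \cong \Z_p^2$ with trivial $\Gamma$-action. Here the unit-ness of $v(T)$ must be used integrally, not just up to pseudo-isomorphism. We would try the following. The finite part contributes to the characteristic ideal only via the torsion of $X_\Gamma$ and $X^\Gamma$ through the Euler-characteristic-type identity for $\Lambda$-modules with $\mathrm{char} = (T^2)$. Concretely, $\mathrm{char}\,X = (T^2)$ exactly, with no unit correction. We also use Greenberg's result that $X$ has no nonzero finite submodule, which needs $E(\Q_\infty)[p] = 0$; this holds since $p \notin S_{\mathrm{red}}$. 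Together these force $X \cong \Lambda/T \oplus \Lambda/T$ up to isomorphism. The worry is that $\Lambda/(T,p^k)$-type extensions could hide inside, and that is where exactness of $v(0)$ being a unit enters. We expect this to be the main obstacle.

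\emph{Step 4 (descent).} Mazur's control theorem gives $\Sel_{p^\infty}(E/\Q) \to \Sel_{p^\infty}(E/\Q_\infty)^\Gamma$ with finite kernel and cokernel. Both vanish here. The kernel is $H^1(\Gamma, E(\Q_\infty)[p^\infty]) = 0$ because $E(\Q)[p] = 0$. The local term at $p$ vanishes since $\widetilde E(\F_p)[p^\infty] = 0$ by non-anomaly, and the bad-prime terms vanish since $p \nmid \prod_v c_v$. So $\Sel_{p^\infty}(E/\Q)^\vee \cong X_\Gamma \cong \Z_p^2$. This gives (2), and it shows that $\Sel_{p^\infty}(E/\Q)$ is divisible of corank $2$. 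It therefore equals the image of $E(\Q)\otimes\Q_p/\Z_p$, and $\Sha(E/\Q)[p^\infty] = 0$, which is (3).
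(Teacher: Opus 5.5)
Your outline follows the paper's proof. The steps are the same: Step~1, then Rubin's theorem with Lemma~\ref{lem:msdmtt} to get $\car_\Lambda(X) = (T^2)$, then Greenberg's theorem on finite submodules, a rank bound on $X_\Gamma$, and Mazur's control theorem with every local term vanishing. The one thing to fix is Step~3. The step you flag as the main obstacle is not one, and the unit condition plays no further role there.

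\emph{How Step~3 closes.} Once $\mu(X) = 0$, $X$ is finitely generated over $\Z_p$ and its $\Z_p$-torsion is a finite $\Lambda$-submodule. So Greenberg's result \cite[Prop.~4.14]{Greenberg} gives $X \cong \Z_p^2$ outright, with $T$ acting by some $M \in M_2(\Z_p)$. Then $\det(TI_2 - M)$ is a monic quadratic generating $\car_\Lambda(X) = (T^2)$, so it equals $T^2$ and $M$ is nilpotent. A nonzero nilpotent $M$ has $\Z_p$-rank one, which gives $\mathrm{rank}_{\Z_p} X/MX = 1$, against your bound $\mathrm{rank}_{\Z_p} X_\Gamma \ge 2$. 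Hence $M = 0$; this is Steps~3--4 of the paper's proof, and the pseudo-isomorphism classification is not needed.

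\emph{The extensions you worry about.} They are already excluded by ingredients you have:
a finite submodule such as $\Lambda/(T,p^k)$ is excluded by Greenberg. A non-split extension of $\Lambda/T$ by $\Lambda/T$ inside $\Z_p^2$ is excluded by your rank argument. An example is $Me_1 = 0$, $Me_2 = p^k e_1$, whose coinvariants are $\Z_p \oplus \Z/p^k\Z$ of rank one; it is pseudo-isomorphic to $\Lambda/T^2$. No Euler-characteristic identity enters, and $\tilde c_2(p) \in \Z_p^\times$ is used exactly once, in Step~1, to produce $\mu = 0$ and $\lambda = 2$.

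\emph{A minor citation point.} The cyclotomic statement over $\Q$ that you need is \cite[Thm.~12.3]{Rubin}. By contrast, \cite[Thm.~4.1]{Rubin} is the main conjecture over $K$. The passage from it to $\Q_\infty$ involves more than an eigenspace decomposition, notably Yager's comparison on the cyclotomic line, and Rubin carries all of it out.
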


\begin{proof}
Fix such a $p$ and write $\tilde c_2 = \tilde c_2(p)$.  All modules are
over $\Lambda = \Z_p[[T]]$, and
$X = \Sel_{p^\infty}(E/\Q_\infty)^\vee$.  Nothing below is conjectural:
the hypothesis is the unit condition at the single prime $p$, and no
statement about the other primes is assumed.

\emph{Step 1: the analytic side.}  As $p \notin S_E$, the prime $p$ is
non-anomalous and prime to $\prod_v c_v \cdot \#E(\Q)_{\mathrm{tors}}$,
so the factors relating $c_2$ and $\tilde c_2$ are units
(Proposition~\ref{prop:normalisation}) and $v_\fp(c_2(p)) = 0$; and
$c_0 = c_1 = 0$ by Lemma~\ref{lem:c0c1}.  Since $\bar\rho_{E,p}$ is
irreducible, $L_p(E,T) \in \Lambda$ by
Remark~\ref{rmk:integrality}, so
$L_p(E,T) = T^2(c_2 + c_3T + \cdots)$ with $c_2 \in \Z_p^\times$ and $(c_2 + c_3T + \cdots) \in \Lambda^\times$.  Hence
$\mu_{\mathrm{an}} = 0$ and $\lambda_{\mathrm{an}} = 2$, proving (1).

\emph{Step 2: Rubin's main conjecture.}  What this step establishes is
the single identity
\[
   \car_\Lambda(X) \;=\; \bigl(L_p(E,T)\bigr)
   \qquad\text{for every split } p \notin S_E ,
\]
between the characteristic ideal of the Selmer dual and the ideal
generated by the $p$-adic $L$-function; i.e., the cyclotomic Iwasawa main
conjecture for $E$ over $\Q$, which for CM curves at good ordinary $p$
is a theorem of Rubin.  With Step 1 the right-hand side is $(T^2)$, so
$X$ is $\Lambda$-torsion with $\mu(X) = 0$ and $\lambda(X) = 2$: the
algebraic invariants match the analytic ones computed in Step 1.  The
rest of this step is the checking that Rubin's theorem applies in
exactly this form, and may be skipped on a first reading.

As $E$ has CM by $\OK$ and $p > 2$
is split, hence good ordinary, Rubin's theorem \cite[Thm.~12.3]{Rubin}
is stated in exactly this setting and over
$\Q$: writing $S_\infty = \Sel_{p^\infty}(E/\Q_\infty)$ for the Selmer
group he forms over $\Q_\infty$ relative to powers of the rational
prime $p$, so that $S_\infty^\vee = X$ by definition, its Pontryagin
dual over $\Z_p[[\Gal(\Q_\infty/\Q)]]$ has characteristic ideal
generated by the $p$-adic $L$-series $\mathcal{L}_{\mathrm{MSD}}$ of
Mazur and Swinnerton-Dyer \cite{MSD}.  That $X$ is $\Lambda$-torsion is
part of the quoted theorem; Rubin takes it from
\cite[Thm.~4.4]{Rubin88}, which rests on the nonvanishing theorem of
Rohrlich \cite{Rohrlich}.  It is Rubin himself who identifies $S_\infty$ with the
$\fp$-Selmer group of $E$ over $K\Q_\infty$ \cite[p.~66]{Rubin} --- for
odd $p$ restriction realises $S_\infty$ as the
$\Gal(K\Q_\infty/\Q_\infty)$-invariants of
$\Sel_{\fp^\infty}\oplus\Sel_{\bar\fp^\infty}$, which complex
conjugation interchanges, so the invariants project isomorphically onto
the $\fp$-branch --- and it is on that side that his $\fp$-adic
machinery runs; the Selmer identifications that machinery rests on are
proved in Perrin-Riou's memoir \cite[Ch.~II, Lemme~9, Prop.~12,
Th.~18]{PerrinRiou84}, whose standing hypotheses hold here over $K$.
The comparison of the Katz--Yager two-variable $L$-function
\cite[Thm.~1]{Yager} with $\mathcal{L}_{\mathrm{MSD}}$ on the
cyclotomic line is a step inside Rubin's proof \cite[p.~67]{Rubin},
used here through the statement of his theorem; his statement fixes no
parametrisation, and by the sentence following
Lemma~\ref{lem:msdmtt} the ideal $\mathcal{L}_{\mathrm{MSD}}\Lambda$ is
the same for every choice with degree ratio prime to $p$.
Lemma~\ref{lem:comparison} and the period $\Omega_\infty$ play no part
in this step.  Rubin's theorem thus gives
$\car_\Lambda(X) = \mathcal{L}_{\mathrm{MSD}}\Lambda$, and
Lemma~\ref{lem:msdmtt} --- applicable since $p \notin S_E$ is odd, good
ordinary, and $\bar\rho_{E,p}$ is irreducible --- converts this into
the identity announced.  Nothing in the step has used the unit
condition: the identity, the torsion of $X$ and the equalities
$\mu(X) = \mu_{\mathrm{an}}(\fp)$,
$\lambda(X) = \lambda_{\mathrm{an}}(\fp)$ hold for every split
$p \notin S_E$; the values $0$ and $2$ come from Step 1, hence from the
unit condition.

\emph{Step 3: freeness.}  As $\mu(X) = 0$ and $X$ is
$\Lambda$-torsion, $X$ is a finitely generated $\Z_p$-module whose
$\Z_p$-torsion submodule is finite and $\Lambda$-stable, hence a finite
$\Lambda$-submodule; Greenberg's theorem \cite[Prop.~4.14]{Greenberg}
--- applicable since $\Sel_{p^\infty}(E/\Q_\infty)$ is
$\Lambda$-cotorsion by Step 2 and $E(\Q)[p] = 0$, as
$p \nmid \#E(\Q)_{\mathrm{tors}}$ --- gives no nonzero such submodule.
So $X \cong \Z_p^2$, with $T$ acting by some $M \in M_2(\Z_p)$.

\emph{Step 4: the action is trivial.}  Here
$\car_\Lambda(X) = (\det(TI_2 - M))$, and comparing
$T^2 - (\Tr M)T + \det M$ with the generator $T^2v(T)$,
$v \in \Lambda^\times$, in degrees $0$ and $1$ gives
$\det M = \Tr M = 0$: $M$ is nilpotent by Cayley--Hamilton (and
$v = 1$), so $M = 0$ or $\mathrm{rank}_{\Z_p}(\mathrm{im}\,M) = 1$.
Mazur's control theorem (\cite{Mazur};
\cite[Thm.~1.2 and \S 3]{Greenberg}) now makes
$s\colon \Sel_{p^\infty}(E/\Q) \to \Sel_{p^\infty}(E/\Q_\infty)^\Gamma$
an isomorphism: $\ker s$ embeds in
$H^1(\Gamma, E(\Q_\infty)[p^\infty])$, which vanishes since a point of
order $p$ in $E(\Q_\infty)$ would generate over $\Q$ an extension of
degree at once a power of $p$ and --- $\Gal(\Q(E[p])/\Q)$ embedding in
the normaliser of a split Cartan subgroup, of order $2(p-1)^2$, since
$\Gal(\Qbar/K)$ acts on $E[p] \cong \OK/\fp \oplus \OK/\bar\fp$ through
the diagonal torus and complex conjugation exchanges the factors
\cite[Ch.~II, \S 1]{deShalit} --- prime to $p$, hence would lie in
$E(\Q)[p] = 0$; while
$\operatorname{coker} s$ is bounded by local terms, all vanishing here
--- at bad $v$ the $p$-part of $c_v$ is $1$ as $p \notin S_E$, at $p$ one
has $\widetilde E(\F_p)[p^\infty] = 0$ by non-anomalousness, and good
$v \ne p$ contribute nothing.  Dually
$X_\Gamma = X/MX \cong \Sel_{p^\infty}(E/\Q)^\vee$; since
$E(\Q)\otimes\Q_p/\Z_p \hookrightarrow \Sel_{p^\infty}(E/\Q)$ we get
$\mathrm{rank}_{\Z_p}(X/MX) \ge \rank E(\Q) = 2$, whereas $M \ne 0$
would give $2 - \mathrm{rank}(M) = 1$.  Hence $M = 0$ and
$\Sel_{p^\infty}(E/\Q)^\vee \cong X \cong \Z_p^2$.

\emph{Step 5: conclusions.}  Step 4 gives
$\corank\Sel_{p^\infty}(E/\Q) = 2$, which is (2).  In
$0 \to E(\Q)\otimes\Q_p/\Z_p \to \Sel_{p^\infty}(E/\Q) \to
\Sha(E/\Q)[p^\infty] \to 0$
the first term has corank $\rank E(\Q) = 2$, equal to that of the
middle, so $\Sha(E/\Q)[p^\infty]$ has corank $0$ and is finite; its
dual is a finite submodule of the torsion-free $\Z_p^2$, hence zero,
which is (3).
\end{proof}

\begin{remark}\label{rmk:nofinitesub}
The proof avoids the $p$-adic leading-term formalism deliberately; the
unit condition is strong enough that the structure argument of Steps
3 and 4 replaces the descent formulae of Schneider and Perrin-Riou.  Those
formulae re-enter in the converse direction of Proposition~\ref{prop:dictionary}.
\end{remark}

\section{The basis of computation}\label{sec:basis}

The identity \eqref{eq:padicbsd} predicted by Mazur--Tate--Teitelbaum
is, for the curves and primes of \S\ref{sec:thmA}, a theorem up to
$p$-adic units.  It combines Rubin's main conjecture for $K$
with Schneider's leading-term theorem.

\begin{proposition}\label{prop:padicbsdunits}
Let $E/\Q$ have CM by the maximal order $\OK$ of an imaginary
quadratic field $K$, with $\rank E(\Q) = 2$, $L(E,1) = 0$ and
$w(E) = +1$, and let $p \notin S_E$ be a split prime of good reduction.
Assume that $\Sha(E/\Q)[p^\infty]$ is finite and that the cyclotomic
$p$-adic height pairing on $E(\Q)$ is nondegenerate.  Then
\[
   v_\fp\bigl(\tilde c_2(p)\bigr)
   \;=\;
   v_p\bigl(\Reg_\gamma\bigr)
   + v_p\bigl(\#\Sha(E/\Q)[p^\infty]\bigr).
\]
\end{proposition}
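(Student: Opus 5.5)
The plan is to combine the main-conjecture identity from Step~2 of Theorem~\ref{prop:consequence} with Schneider's leading-term theorem, and then to strip off the factors that are $p$-adic units because $p \notin S_E$.

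First, Step~2 of Theorem~\ref{prop:consequence} gives the identity
\[
\car_\Lambda(X) = \bigl(L_p(E,T)\bigr)
\]
for every split $p \notin S_E$; that step never used the unit condition. By Lemma~\ref{lem:c0c1} one has $L_p(E,T) = T^2(c_2 + c_3T + \cdots)$. Hence a generator $f_X$ of $\car_\Lambda(X)$ satisfies
\[
f_X = T^2 g(T), \qquad v_p(g(0)) = v_p(c_2(p)),
\]
because two generators differ by a unit of $\Lambda$, whose constant term is a unit. This step requires $c_2(p) \ne 0$, which I will get from the next step once the height pairing is nondegenerate. Until then I work only with $\ord_T f_X \ge 2$.

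Next, I invoke Schneider's leading-term theorem (Schneider, and in the form of Perrin-Riou and \cite[Thm.~6.1]{SteinWuthrich}, which is purely algebraic and does not need non-CM). Its hypotheses are that $\Sha(E/\Q)[p^\infty]$ is finite and that the cyclotomic $p$-adic height is nondegenerate, both assumed here. It states that $\ord_T f_X = r = 2$, and that up to a $p$-adic unit the leading coefficient $f_X^{(2)}(0)$ equals
\[
\frac{\Reg_p}{\log_p(1+p)^2}\cdot \#\Sha(E/\Q)[p^\infty]\cdot \frac{\prod_v c_v \cdot \#\widetilde E(\F_p)^2}{(\#E(\Q)_{\mathrm{tors}})^2}.
\]
I will need to check the normalisation: with $\gamma$ acting by $1+p$, each height contributes a factor $\log_p(1+p)^{-1}$, so the regulator term is exactly $\Reg_\gamma$ as defined in \S\ref{ssec:notation}. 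Comparing the two leading coefficients then gives
\[
v_p(c_2(p)) = v_p(\Reg_\gamma) + v_p\bigl(\#\Sha(E/\Q)[p^\infty]\bigr) + (\text{valuations of the remaining factors}).
\]

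Finally, I dispose of the extra factors. Tamagawa numbers and torsion are prime to $p$ since $S_{\mathrm{bad}} \subseteq S_E$. The Euler-type factor $\#\widetilde E(\F_p)^2$, equivalently $(1-\alpha_p^{-1})^2$ up to a unit, is a unit by non-anomalousness ($S_{\mathrm{an}} \subseteq S_E$). Proposition~\ref{prop:normalisation} then converts $v_p(c_2(p))$ into $v_\fp(\tilde c_2(p))$, which gives the claimed identity.

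The main obstacle is matching Schneider's conventions to those here. There are three things to reconcile. First, his height normalisation (the sign and the $\log_p(1+p)$ factor) must match \cite[(4.1)]{SteinWuthrich}. Second, his regulator is computed on $E(\Q)$, whereas heights are defined via $E^\circ(\Q)$; the index of $E^\circ(\Q)$ in $E(\Q)$ is prime to $p$ by non-anomalousness and $p \nmid \prod_v c_v$. Third, his Selmer group must be the same one whose dual is $X$, which holds because $E(\Q)[p]=0$ and the control theorem of Step~4 applies with trivial local terms. I would handle all three by citing \cite[Thm.~6.1]{SteinWuthrich} verbatim and noting that every discrepancy is a $p$-adic unit off $S_E$.
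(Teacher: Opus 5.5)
Your proposal is correct and is essentially the paper's proof. Rubin's main conjecture, in the form of Step~2 of Theorem~\ref{prop:consequence}, identifies $\car_\Lambda(X)$ with $(L_p(E,T))$; Schneider's leading-term theorem matches the leading coefficient of a characteristic series with the right-hand side of \eqref{eq:padicbsd} up to a $p$-adic unit; and the Tamagawa, torsion and local factors at $p$ are units off $S_E$, so Proposition~\ref{prop:normalisation} finishes. Your remark about $E^\circ(\Q)$ is harmless but not needed for the identity, since the height is already extended to all of $E(\Q)$ by $\hat h_p(P) = m^{-2}\hat h_p(mP)$; the index matters only for the integrality of $\Reg_\gamma$ used later in Proposition~\ref{prop:dictionary}.
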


\begin{proof}
Under the two assumptions, Schneider's leading-term theorem
\cite[Thms.~2, 2$'$]{Schneider} --- stated for good ordinary $p$ with
$X$ torsion, complex multiplication not excluded, and normalised as in
\cite[Thm.~6.1]{SteinWuthrich} --- gives
\[\ord_{T=0}\car(X) = \corank\Sel_{p^\infty}(E/\Q)\] and identifies the
leading coefficient of a characteristic power series of $X$, up to a
$p$-adic unit, with the right-hand side of \eqref{eq:padicbsd}; its
factor $(1-\alpha_p^{-1})^{2}$ arises algebraically as the square of
the $p$-part of the local term at $p$, a unit on both sides for
non-anomalous $p$.  Rubin's theorem \cite[Thm.~12.3]{Rubin} supplies
$\car_\Lambda(X) = \bigl(L_p(E,T)\bigr)$, as in Step 2 of
Theorem~\ref{prop:consequence} and with no unit condition assumed, so
that algebraic and analytic leading coefficients agree up to units.
Comparing the two and dividing by the unit normalising factors of
Proposition~\ref{prop:normalisation} gives the stated equality of
valuations.
\end{proof}

Proposition~\ref{prop:padicbsdunits} converts the unit condition into a
statement about the regulator and $\Sha$ in one direction only, since
its two hypotheses are themselves arithmetic.  At the primes of
\S\ref{sec:thmA} the conversion goes both ways.

\begin{proposition}[Analytic--arithmetic dictionary]\label{prop:dictionary}
Let $E/\Q$ have CM by the maximal order $\OK$ of an imaginary
quadratic field $K$, with $\rank E(\Q) = 2$, $L(E,1) = 0$ and $w(E) = +1$, and let
$p \notin S_E$ be a split prime of good reduction.  The following are
equivalent:
\begin{enumerate}
\item $\tilde c_2(p) \in \Z_p^\times$;
\item the cyclotomic $p$-adic height pairing on $E(\Q)$ is
nondegenerate with $v_p\bigl(\Reg_\gamma\bigr) = 0$, and
$\Sha(E/\Q)[p^\infty] = 0$.
\end{enumerate}
\end{proposition}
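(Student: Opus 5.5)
The plan is to prove the two implications separately, using Theorem~\ref{prop:consequence} for one and Proposition~\ref{prop:padicbsdunits} for the other. Throughout, $p \notin S_E$ is split and of good reduction, so both results apply.

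\emph{(2) $\Rightarrow$ (1).} Assume the height pairing is nondegenerate, $v_p(\Reg_\gamma) = 0$, and $\Sha(E/\Q)[p^\infty] = 0$. Then $\Sha(E/\Q)[p^\infty]$ is finite, so both hypotheses of Proposition~\ref{prop:padicbsdunits} hold. That proposition gives
\[
   v_\fp\bigl(\tilde c_2(p)\bigr) = v_p(\Reg_\gamma) + v_p(1) = 0 .
\]
Since $\tilde c_2(p) \in \Q_p$, it lies in $\Z_p^\times$.

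\emph{(1) $\Rightarrow$ (2).} Assume $\tilde c_2(p) \in \Z_p^\times$. Theorem~\ref{prop:consequence} gives $\Sha(E/\Q)[p^\infty] = 0$ directly. It also gives $\corank_{\Z_p}\Sel_{p^\infty}(E/\Q) = 2$, and its proof gives $\car_\Lambda(X) = (T^2)$, so $\ord_{T=0}\car(X) = 2$.

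It remains to get nondegeneracy of the height pairing and a unit regulator. This is the step I expect to be the main obstacle, since Proposition~\ref{prop:padicbsdunits} assumes nondegeneracy rather than proving it. I would argue as follows.

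\begin{enumerate}
\item Use the other half of Schneider's theorem \cite{Schneider}, which is also the content of Perrin-Riou's descent: when $\Sha(E/\Q)[p^\infty]$ is finite, $\ord_{T=0}\car(X) \ge \rank E(\Q)$, with equality if and only if the cyclotomic height pairing is nondegenerate. This is the formalism Remark~\ref{rmk:nofinitesub} says re-enters in the converse direction.
\item Since $\ord_{T=0}\car(X) = 2 = \rank E(\Q)$, the pairing is nondegenerate.
\item Nondegeneracy makes the hypotheses of Proposition~\ref{prop:padicbsdunits} hold, giving $0 = v_p(\Reg_\gamma) + 0$.
\end{enumerate}

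If the equality criterion in step (i) is not available in the form needed, I would fall back on a direct argument. By Step~4 of Theorem~\ref{prop:consequence}, $\Gamma$ acts trivially on $X \cong \Z_p^2$. I would then identify the Schneider height pairing with the Bockstein pairing on $X_\Gamma \cong X^\Gamma$, i.e.\ the connecting map $X^\Gamma \to X_\Gamma$ induced by $T$. Its discriminant equals the leading coefficient $\tilde c_2(p)$ up to units. Since $\tilde c_2(p)$ is a unit, the pairing would be unimodular, which gives nondegeneracy and $v_p(\Reg_\gamma) = 0$ at once.

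Two bookkeeping points remain. The regulator on $E(\Q)$ rather than on $E^\circ(\Q)$ differs only by the index $\prod c_v$, which is prime to $p$ since $p \notin S_E$. The factor $(1-\alpha_p^{-1})^2$ is a unit by non-anomalousness, as in Proposition~\ref{prop:normalisation}.
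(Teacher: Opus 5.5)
Your proof is correct and follows the paper's own argument. The direction (2)$\Rightarrow$(1) is Proposition~\ref{prop:padicbsdunits}. For (1)$\Rightarrow$(2) you take $\Sha(E/\Q)[p^\infty]=0$ and $\car(X)=(T^2)$ from Theorem~\ref{prop:consequence}, get nondegeneracy from Schneider's equality criterion with $\ord_{T=0}\car(X)=2=\corank\Sel_{p^\infty}(E/\Q)$, and finish with Proposition~\ref{prop:padicbsdunits}.

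Your step (iii) reads $v_p(\Reg_\gamma)=0$ directly off the already-known vanishing of $\Sha(E/\Q)[p^\infty]$. This is slightly leaner than the paper, which additionally proves $\Reg_\gamma\in\Z_p$ in order to argue that two nonnegative summands with sum zero both vanish. The Bockstein fallback and the closing bookkeeping remarks are not needed.
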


\begin{proof}
(1) $\Rightarrow$ (2): Steps 1--5 of Theorem~\ref{prop:consequence}
give $\Sha(E/\Q)[p^\infty] = 0$ and $\car(X) = (T^2)$ with
$X \cong \Z_p^2$ and trivial $T$-action.  Schneider's theorem
\cite[Thms.~2, 2$'$]{Schneider} gives
$\ord_{T=0}\car(X) \ge \corank\Sel_{p^\infty}(E/\Q)$, with equality if
and only if $\Sha[p^\infty]$ is finite and the cyclotomic height
pairing is nondegenerate.

Here $\ord_{T=0}\car(X) = 2 = \corank\Sel_{p^\infty}(E/\Q)$, so the
pairing is nondegenerate; both hypotheses of
Proposition~\ref{prop:padicbsdunits} therefore hold, and it gives
$v_\fp(\tilde c_2) = v_p(\Reg_\gamma) + v_p(\#\Sha[p^\infty])$ with both
summands $\ge 0$.  For the regulator, let $m$ be the exponent of
$E(\Q)/E^\circ(\Q)$, where $E^\circ(\Q)$ collects the points reducing
to the identity component at every bad place and to the identity in
$\widetilde E(\F_p)$; then $m$ divides
$\#E(\Q)_{\mathrm{tors}}\cdot\prod_v c_v\cdot\#\widetilde E(\F_p)$ and
is prime to $p$.  For $P \in E^\circ(\Q)$ one has $v_p(\hat h_p(P)) \ge 1$
(\S\ref{ssec:notation}), and bilinearity
with $\hat h_p(mP) = m^2\hat h_p(P)$, $p \nmid m$, propagates $v_p \ge 1$ to all
pairings on $E(\Q)$, whence $v_p(\Reg_p) \ge 2$ and
$\Reg_\gamma \in \Z_p$.  Two nonnegative summands with sum $0$ vanish,
which is (2).

(2) $\Rightarrow$ (1): the hypotheses of
Proposition~\ref{prop:padicbsdunits} are exactly (2), so
$v_\fp(\tilde c_2) = v_p(\Reg_\gamma) + v_p(\#\Sha[p^\infty]) = 0$.
\end{proof}

\begin{remark}[The CM route to nondegeneracy]\label{rmk:prroute}
The proof above takes nondegeneracy from Schneider's theorem, stated
over $\Q$.  For a CM curve the order statement is also available in a
form proved in the CM literature.  Perrin-Riou
\cite[Ch.~V, Thm.~8, p.~106]{PerrinRiou84}, over a base field
$F \supseteq K$ and a $\Z_p$-extension of $F$ inside
$F_\infty \subset F(E[p^\infty])$, bounds the order of vanishing at the
origin of the $\fp$-branch characteristic series below by $n_F + r_F$
--- the $\OK$-rank of $E(F)$ modulo torsion plus the $\Z_p$-rank of
$T_p\Sha(E/F)$ --- with equality if and only if the associated height
pairing is nondegenerate.  Its standing hypotheses are ours, applied
over $F = K$: $E$ is its own quadratic twist by $-1$, so $E(K)$ has
$\Z$-rank $4$ and $n_K = 2$, while part 1 of the theorem itself forces
$r_K = 0$; the $\fp$-branch series is $\car_\Lambda(X) = (T^2)$, of
order $2 = n_K + r_K$, whence nondegeneracy on $E(K)$, descending to
$E(\Q)$ by $\Gal(K/\Q)$-equivariance of the pairing (the two
eigenspaces are orthogonal and each is a copy of $E(\Q)\otimes\Q$).
Taking that route in place of Schneider's would require two links we
have not checked: that her bookkeeping is
$\OK$-linear --- $n_F$ is an $\OK$-rank --- so that the $\fp$-branch
series is what the theorem measures; and that restricting her
two-variable series to the cyclotomic line computes the characteristic
ideal of the Selmer dual over $K\Q_\infty$ itself, not that of the
coinvariants over $F_\infty$ \cite[Ch.~V, \S 3]{PerrinRiou84}.
\end{remark}

The equivalence expressed in Proposition~\ref{prop:dictionary} is what makes computational evidence possible.
Theorem~\ref{prop:consequence} converts the unit condition into
arithmetic but not back, whereas what can be computed at a given prime
is the $\fp$-adic regulator; only the converse direction turns such a
measurement into a statement about $\tilde c_2(p)$.

\section{Class sums of Eisenstein--Kronecker numbers}\label{sec:jet}

This section fixes the Eisenstein--Kronecker data in which the criterion of \S\ref{sec:criterion} is stated: a finite set $D_E$, functions $r_{a,b}$ on it, and their class sums $P_{a,b}$, which Proposition~\ref{prop:classsumsL} identifies with critical Hecke $L$-values.

Everything used here is in \S\ref{sec:background}: the Grossencharacter
$\psi_E$ and its conductor $\ff$ (\S\ref{ssec:katz}), and the
Eisenstein--Kronecker numbers $e^*_{a,b}$ with the lattice invariant
$A(\Gamma)$ (\S\ref{ssec:BK}).  The construction is archimedean and no
prime $p$ enters it.

\subsection{The ray class group and the class sums}
\label{ssec:package}

The construction has three ingredients: a set of parameters and two
normalisations.  Each is fixed by the
Bannai--Kobayashi calculus of \S\ref{ssec:BK}, and they are taken here in
that order.  No prime enters any of them.

\emph{The parameters.}\quad By (BK1) every Eisenstein--Kronecker
number, at every index, is a Taylor coefficient of the single Kronecker
theta function $\Theta(z,w)$ at a pair of torsion points, and the
construction fixes the pair.  The measure is generated by one rational
function along the $\fp$-power tower, translated by $\ff$-torsion
(\S\ref{ssec:ellunits}); in the Bannai--Kobayashi form the measure
attached to a class is the expansion of $\Theta$ translated by an
$\ff$-division value in the first variable and untranslated in the
second \cite[Prop.~3.1, Def.~3.6]{BannaiKobayashi}.  The second
parameter is therefore $0$ throughout, and the first runs over the
$\ff$-division values.  Since $K$ has class number one,
$\ff = (f_0)$ for the generator $f_0$ fixed in \S\ref{ssec:katz}, and with
$\Gamma = \Omega_\infty\OK$ the period lattice of
\S\ref{ssec:katz} the points of order dividing $\ff$ are the
division values
\begin{equation}\label{eq:divisionvalues}
   t_g \;:=\; g\,\Omega_\infty/f_0
   \;\in\; \tfrac{1}{f_0}\Gamma/\Gamma ,
   \qquad g \in \OK/\ff .
\end{equation}
The parameters are the $t_g$ of exact order $\ff$, that is those with
$g \in (\OK/\ff)^\times$.

\emph{The first normalisation, and the ray class group.}\quad A
division value depends on the residue $g$, while the Galois group acts
through ideals.  Since $K$ has class number one, every ray class modulo
$\ff$ contains a principal ideal $(g)$ with $g$ prime to $\ff$, and $g$
is determined by the class up to a unit of $\OK$; so
\begin{equation}\label{eq:DEdef}
   D_E \;:=\; \mathrm{Cl}_\ff(K)
   \;=\; (\OK/\ff)^\times\!/\mu_K ,
\end{equation}
and $\Gal(K(\ff)/K)$ acts simply transitively on $D_E$ through the
Artin map.  What is wanted, then, is a value attached to the class and
not to the residue.  Changing $g$ to $ug$ with $u \in \mu_K$ changes
$t_g$ to $u\,t_g$, and the effect on the Eisenstein--Kronecker number
is a root of unity: at second parameter $0$ the series of
\S\ref{ssec:BK} whose continuation defines $e^*_{a,b}(z_0,0)$ is
\[
   \sideset{}{'}\sum_{\gamma \in \Gamma}
   \frac{(\bar z_0 + \bar\gamma)^{a+b}}{|z_0 + \gamma|^{2s}} ,
   \qquad s = b ,
\]
multiplication by $u$ permutes $\Gamma$, and substituting
$\gamma \mapsto u\gamma$ multiplies the numerator by $\bar u^{\,a+b}$
and leaves the denominator fixed.  The resulting identity holds where
the series converges, hence identically in $s$ after continuation, and
in particular at $s = b$:
\begin{equation}\label{eq:EKscaling}
   e^*_{a,b}(u z_0,\, 0)
   \;=\;
   \bar u^{\,a+b}\, e^*_{a,b}(z_0,\, 0) ,
   \qquad u \in \mu_K .
\end{equation}
Write $\varepsilon$ for the unit character of $\psi_E$: the finite
character of $(\OK/\ff)^\times$ with
$\psi_E\bigl((\alpha)\bigr) = \varepsilon(\alpha)\,\alpha$ for
$\alpha \in \OK$ prime to $\ff$, so that $\varepsilon(u) = \bar u$ on
roots of unity.  Then $\varepsilon(u)^{-(a+b)} = u^{\,a+b}$ for
$u \in \mu_K$, which by \eqref{eq:EKscaling} cancels the scaling:
\begin{align*}
   \varepsilon(ug)^{-(a+b)}\, e^*_{a,b}(t_{ug},\, 0)
   &\;=\;
   u^{\,a+b}\,\bar u^{\,a+b}\;
   \varepsilon(g)^{-(a+b)}\, e^*_{a,b}(t_g,\, 0) \\
   &\;=\;
   \varepsilon(g)^{-(a+b)}\, e^*_{a,b}(t_g,\, 0) .
\end{align*}
So the weight $\varepsilon(g)^{-(a+b)}$ makes the value a function of
the class $[g] \in D_E$, and it is not a choice made here.
Bannai--Kobayashi attach to the class $[g]$ the parameter
$\varepsilon(g)\,t_g$ \cite[Def.~3.6]{BannaiKobayashi}; the values of
$\varepsilon$ are roots of unity in $K$, hence lie in $\mu_K$, so
\eqref{eq:EKscaling} with $u = \varepsilon(g)$ gives
\[
   e^*_{a,b}\bigl(\varepsilon(g)\,t_g,\, 0\bigr)
   \;=\;
   \varepsilon(g)^{-(a+b)}\, e^*_{a,b}(t_g,\, 0) ,
\]
and the weighted value is the value of $e^*_{a,b}$ at
Bannai--Kobayashi's parameter for the class.

\emph{The second normalisation.}\quad Division by $A(\Gamma)^{a}$ makes
the value algebraic: at torsion parameters the quotient
$e^*_{a,b}(t,0)/A(\Gamma)^{a}$ lies in $\Qbar$, and in fact in an
abelian extension of $K$ \cite[Thm.~2.9, Cor.~2.11]{BannaiKobayashi}.
With both normalisations in place the value at a class is well defined
and algebraic, at every index.

\begin{definition}[Class functions and class sums]\label{def:classsums}
Let $E/\Q$ have complex multiplication by the maximal order $\OK$ of an
imaginary quadratic field $K$ of class number one, let $\psi_E$ be its
Grossencharacter, $\ff = (f_0)$ the conductor of $\psi_E$ and
$\varepsilon$ the unit character of $\psi_E$, let
$\Gamma = \Omega_\infty\OK$ be the period lattice with invariant
$A(\Gamma)$, and for $g \in (\OK/\ff)^\times$ let
$t_g = g\,\Omega_\infty/f_0$.  For integers $a \ge 0$ and $b \ge 1$ the
\emph{class function} $r_{a,b}$ on the ray class group $D_E$ of
\eqref{eq:DEdef} and its \emph{class sum} $P_{a,b}$ are
\begin{equation}\label{eq:classsums}
   r_{a,b} \colon [g] \longmapsto
   \varepsilon(g)^{-(a+b)}\,
   \frac{e^*_{a,b}(t_g,\, 0)}{A(\Gamma)^{a}} ,
   \qquad
   P_{a,b} \;:=\; \sum_{[g] \in D_E} r_{a,b}\bigl([g]\bigr) .
\end{equation}
\end{definition}

\noindent That $r_{a,b}$ is well defined on $D_E$ is the computation
above, and its values are algebraic by the second normalisation.
The class sums are critical Hecke $L$-values up to explicit periods,
with $L_\ff$ as in \S\ref{ssec:katz}.  The identification is
Proposition~1.6(i) of \cite{BannaiKobayashi}, stated there on the
lattice $\ff \subset K \subset \C$, transported to $\Gamma$ by the
homogeneity \eqref{eq:EKhomogeneity}.

\begin{proposition}[Class sums as critical values]\label{prop:classsumsL}
Let $E/\Q$ have complex multiplication by the maximal order $\OK$ of an
imaginary quadratic field $K$ of class number one and discriminant
$d_K$, let $\psi_E$ be its Grossencharacter, of conductor
$\ff = (f_0)$, let $\Gamma = \Omega_\infty\OK$ be the period lattice,
and let $P_{a,b}$ be the class sums \eqref{eq:classsums}.  Then for all
integers $a \ge 0$ and $b \ge 1$,
\[
   P_{a,b}
   \;=\;
   \Bigl(\frac{f_0}{\Omega_\infty}\Bigr)^{a+b}
   \Bigl(\frac{2\pi}{\Nm\ff\,\sqrt{|d_K|}}\Bigr)^{a}\,
   L_\ff\bigl(\bar\psi_E^{\,a+b},\, b\bigr) .
\]
\end{proposition}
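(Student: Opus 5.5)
The plan is to compute $P_{a,b}$ directly from the defining series, working in the region of convergence $b > (a+b)/2 + 1$ and then extending by analytic continuation in $s$. The identity is linear in the series, so it suffices to prove the corresponding identity of meromorphic functions of $s$,
\[
   \sum_{[g]\in D_E}\varepsilon(g)^{-(a+b)}K^*_{a+b}(t_g,0,s;\Gamma)
   \;=\;\Bigl(\frac{f_0}{\Omega_\infty}\Bigr)^{a+b}\Bigl|\frac{f_0}{\Omega_\infty}\Bigr|^{-2s}\sum_{(\mathfrak b,\ff)=1}\frac{\bar\psi_E(\mathfrak b)^{a+b}}{\Nm\mathfrak b^{\,s}},
\]
and then to specialise at $s=b$ and divide by $A(\Gamma)^a$. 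The first step is to rescale by $\lambda = f_0/\Omega_\infty$ using \eqref{eq:EKhomogeneity}. This carries $\Gamma$ to $\ff=f_0\OK$ and $t_g$ to $g$, and contributes $\bar\lambda^{a}\lambda^{-b}$. Equivalently, one can expand the series for $\Gamma$ directly: $t_g+\gamma = (\Omega_\infty/f_0)(g+f_0\beta)$ with $\beta\in\OK$.

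The second step is to unfold the sum over lattice points. As $\beta$ ranges over $\OK$ and $g$ over a set of representatives of $(\OK/\ff)^\times$, the elements $\alpha = g+f_0\beta$ range exactly once over the elements of $\OK$ prime to $\ff$. Passing to $D_E = (\OK/\ff)^\times/\mu_K$, the class representatives account for each nonzero element prime to $\ff$ up to units. Concretely, summing over $[g]\in D_E$ and over $\gamma$ gives each ideal $\mathfrak b=(\alpha)$ exactly once, because $(\OK/\ff)^\times \to D_E$ has fibres $\mu_K$ acting freely (using that $1$ is the only unit $\equiv 1 \bmod \ff$). For each such $\alpha\equiv g$, the summand is
\[
   \varepsilon(g)^{-(a+b)}\bar\alpha^{\,a+b}|\alpha|^{-2s}
   = \overline{\varepsilon(\alpha)\alpha}^{\,a+b}\Nm(\alpha)^{-s}
   = \bar\psi_E(\mathfrak b)^{a+b}\Nm\mathfrak b^{-s},
\]
where I use $\varepsilon(\alpha)^{-1}=\overline{\varepsilon(\alpha)}$ and $\varepsilon(\alpha)=\varepsilon(g)$. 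Summing over $\mathfrak b$ then produces $L_\ff(\bar\psi_E^{a+b},s)$ times the power of $\lambda$ recorded above. The independence of the representative $g$ has already been checked via \eqref{eq:EKscaling}.

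The third step is the bookkeeping of the constants. At $s=b$ the prefactor is $\bar\lambda^{a+b}|\lambda|^{-2b} = \lambda^{a}\bar\lambda^{\,?}$-type, which combines to $\lambda^{-b}\bar\lambda^{a}$. To reach the stated form I still have to divide by $A(\Gamma)^a$, where $A(\Gamma)=|\Omega_\infty|^2\sqrt{|d_K|}/(2\pi)$ because the area of $\C/\OK$ is $\sqrt{|d_K|}/2$. Then $\bar\lambda^{a}/A(\Gamma)^a$ becomes $\lambda^{a}(2\pi)^a/(\Nm\ff\,\sqrt{|d_K|})^a$, since $|\lambda|^2 |\Omega_\infty|^2 = \Nm\ff$. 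Multiplying by $\lambda^{-b}$ gives $\lambda^{a-b}$, but the stated exponent is $a+b$, so I will need to recheck the convention for the $\lambda$ in $t_g$ and for the direction of the rescaling.

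I expect this constant-matching to be the main obstacle. The exponent of $f_0/\Omega_\infty$ and the normalisation of $A$ must match Bannai--Kobayashi's Proposition~1.6(i) exactly. My first move is to redo it by citing that proposition on the lattice $\ff$, where the normalising constant is $A(\ff)=\Nm\ff\sqrt{|d_K|}/(2\pi)$, and only afterwards transport the result to $\Gamma$ via \eqref{eq:EKhomogeneity}, taking care with the order of $\lambda$ versus $\bar\lambda$.

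Analytic continuation poses no difficulty: both sides continue meromorphically in $s$, and the identity holds on a half-plane, so it holds at $s=b$ even when $b$ is outside the range of convergence.
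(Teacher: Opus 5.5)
Your route is the paper's: rescale to the lattice $\ff$ by \eqref{eq:EKhomogeneity}, identify the class sum with $L_\ff(\bar\psi_E^{\,a+b},s)$, then track constants. Your unfolding step is sound. One representative $g$ per class of $D_E$, with $\beta$ running over $\OK$, hits each ideal prime to $\ff$ exactly once, and $\varepsilon(g)^{-(a+b)}\bar\alpha^{\,a+b}=\bar\psi_E(\mathfrak b)^{a+b}$. That is a self-contained proof, in this setting, of the Bannai--Kobayashi identity (their Prop.~1.6(i)) that the paper quotes. The paper instead moves $\varepsilon(g)$ into the parameter $z_g=\varepsilon(g)g$ via \eqref{eq:EKscaling} and then matches summands with theirs.

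The gap is in the constants, which are the entire content of the statement. As written, your argument ends at $\lambda^{a-b}$ and leaves the discrepancy open. The stated exponent $a+b$ is correct, and no convention needs revisiting; there are two computational errors.

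First, with your $\lambda=f_0/\Omega_\infty$, which sends $\Gamma$ to $\ff$ and $t_g$ to $g$, \eqref{eq:EKhomogeneity} reads $e^*_{a,b}(g,0;\ff)=\bar\lambda^{a}\lambda^{-b}\,e^*_{a,b}(t_g,0;\Gamma)$. So the factor in front of the $\ff$-side is the inverse, $\bar\lambda^{-a}\lambda^{b}$. (The paper's $\lambda$ is $\Omega_\infty/f_0$, the reciprocal of yours.) Likewise your expansion $t_g+\gamma=\lambda^{-1}(g+f_0\beta)$ gives the prefactor $\bar\lambda^{-(a+b)}|\lambda|^{2s}=\overline{(\Omega_\infty/f_0)}^{\,a+b}|\Omega_\infty/f_0|^{-2s}$, not the $(f_0/\Omega_\infty)^{a+b}|f_0/\Omega_\infty|^{-2s}$ of your display. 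At $s=b$ this prefactor is $\bar\lambda^{-a}\lambda^{b}$.

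Second, $A(\Gamma)=|\lambda|^{-2}A(\ff)$ with $A(\ff)=\Nm\ff\sqrt{|d_K|}/2\pi$. Hence $A(\Gamma)^{-a}$ contributes $\lambda^{a}\bar\lambda^{a}A(\ff)^{-a}$, whereas you effectively used $\lambda^{a}\bar\lambda^{-a}A(\ff)^{-a}$. With both corrections,
\[
   \bar\lambda^{-a}\lambda^{b}\cdot\lambda^{a}\bar\lambda^{a}A(\ff)^{-a}
   \;=\;
   \Bigl(\frac{f_0}{\Omega_\infty}\Bigr)^{a+b}
   \Bigl(\frac{2\pi}{\Nm\ff\,\sqrt{|d_K|}}\Bigr)^{a},
\]
which is the stated factor. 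The conjugate powers cancel exactly, and that cancellation is the reason the class sums are normalised by $A(\Gamma)^{a}$.
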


\begin{proof}
Write $\varepsilon$ for the unit character of $\psi_E$, and for
$[g] \in D_E$ put $z_g := \varepsilon(g)g \in \OK$, well defined modulo
$\ff$ by the computation following \eqref{eq:EKscaling}.  By
\eqref{eq:EKscaling} with $u = \varepsilon(g)$,
$r_{a,b}([g]) = e^*_{a,b}(\varepsilon(g)t_g, 0; \Gamma)/A(\Gamma)^{a}$,
and $\varepsilon(g)t_g = \lambda z_g$ with $\lambda := \Omega_\infty/f_0$,
while $\Gamma = \lambda\ff$.  By \eqref{eq:EKhomogeneity},
\[
   A(\Gamma)^{a}\,P_{a,b}
   \;=\;
   \sum_{[g] \in D_E} e^*_{a,b}(\lambda z_g,\, 0;\, \lambda\ff)
   \;=\;
   \bar\lambda^{\,a}\lambda^{-b}
   \sum_{[g] \in D_E} K^*_{a+b}(z_g,\, 0,\, b;\, \ff) .
\]

The sum on the right is $L_\ff(\bar\psi_E^{\,a+b}, b)$.  Let $I_K(\ff)$
be the group of fractional ideals prime to $\ff$ and $P_K(\ff)$ the
subgroup of principal ideals having a generator congruent to $1$ modulo
$\ff$ in the multiplicative sense, so that
$I_K(\ff)/P_K(\ff) = D_E$ by \eqref{eq:DEdef}.
Proposition~1.6(i) of \cite{BannaiKobayashi}, for a Hecke character
$\varphi$ of conductor $\ff$ and infinity type $(1,0)$, states
\[
   L_\ff(\bar\varphi^{\,m}, s)
   \;=\;
   \sum_{\mathfrak{b} \in I_K(\ff)/P_K(\ff)}
   K^*_{m}\bigl(\varphi(\alpha_{\mathfrak{b}}\mathfrak{b}),\, 0,\, s;\,
   \varphi(\mathfrak{b})\,\mathfrak{b}^{-1}\ff\bigr) ,
\]
where $\alpha_{\mathfrak{b}} \in \mathfrak{b}^{-1}$ is any element with
$(\alpha_{\mathfrak{b}}) \in P_K(\ff)$: the summand for
$\mathfrak{b}$ is the part of the $L$-series over the integral ideals in
the ray class of $\mathfrak{b}$, each such ideal being
$(\alpha_{\mathfrak{b}} + \gamma)\mathfrak{b}$ for exactly one
$\gamma \in \mathfrak{b}^{-1}\ff$, and
$\varphi\bigl((\alpha_{\mathfrak{b}}+\gamma)\mathfrak{b}\bigr) =
\varphi(\alpha_{\mathfrak{b}}\mathfrak{b}) + \varphi(\mathfrak{b})\gamma$
with $|\varphi(\mathfrak{c})|^{2} = \Nm\mathfrak{c}$.  Take
$\varphi = \psi_E$, $m = a+b$ and $s = b$.  For the class of
$\mathfrak{b} = (g)$ the lattice is
$\varepsilon(g)g \cdot g^{-1}\OK \cdot \ff = \ff$, since
$\varepsilon(g) \in \mu_K$; and
$\alpha_{\mathfrak{b}}\mathfrak{b} = (g')$ for an integral $g'$ with
$g' \equiv ug \bmod \ff$ for some $u \in \mu_K$, so that
$\psi_E\bigl((g')\bigr) = \varepsilon(g')g' \equiv
\varepsilon(u)u \cdot \varepsilon(g)g = z_g \bmod \ff$, as
$\varepsilon(u) = \bar u$ on $\mu_K$.  Since $K^*_{a+b}(z_0, 0, s; \ff)$
depends on $z_0$ only modulo $\ff$ (\S\ref{ssec:BK}), the summand is
$K^*_{a+b}(z_g, 0, s; \ff)$; the identity holds for
$\mathrm{Re}\,s > (a+b)/2 + 1$, where the $L$-series converges
absolutely, and at $s = b$ by continuation.

Finally $A(\Gamma) = |\lambda|^{2}A(\ff)$ and
$A(\ff) = \Nm\ff \cdot A(\OK) = \Nm\ff\,\sqrt{|d_K|}/2\pi$, so
$A(\Gamma)^{-a}\,\bar\lambda^{\,a}\lambda^{-b}
 = \lambda^{-(a+b)}A(\ff)^{-a}$, which is the stated factor.
\end{proof}

\section{Proof of Theorem~\ref{intro:criterion}}\label{sec:criterion}

This section proves Theorem~\ref{intro:criterion}, restated as Theorem~\ref{thm:criterion} at its end. Propositions~\ref{prop:jetformula} and~\ref{prop:grading} convert the unit condition into the nonvanishing of one element of $\overline{\F}_p$, the criterion class; Lemma~\ref{lem:decoupling} shows that the comparison and Euler factors do not affect it; Lemma~\ref{lem:moments} and Proposition~\ref{prop:exactreduction} compute it exactly, as the reduction of an explicit combination of three Eisenstein--Kronecker class sums of weight $2p-1$.

We recall existing, and fix new, notation. Every numbered statement below
writes its hypotheses in full, so this paragraph fixes symbols only.
$E/\Q$ has CM by the maximal order $\OK$ of an imaginary quadratic
field $K$, and $\psi = \psi_E$ is its Grossencharacter, of conductor
$\ff$ (\S\ref{ssec:katz}).  The field $K$ has class number one
(\S\ref{ssec:notation}), so the Katz measure
$\mu = \mu_{\ff\fp^\infty}$ of \S\ref{ssec:katz} is available for $E$;
we write $\hat\psi$ for the $p$-adic avatar of $\psi_E$, the continuous
character of $G_\infty$ with $\hat\psi(\sigma_{\mathfrak a}) = \psi_E(\mathfrak a)$
for every ideal $\mathfrak a$ prime to $\ff p$, $\sigma_{\mathfrak a}$ its
Artin symbol, and $\mu_\psi := \hat\psi\,\mu$ for the central twist of
$\mu$, its twist by $\hat\psi$.  The values of $\hat\psi$ are $p$-adic
units, so $\mu_\psi$ is $W$-valued, and $\hat\psi$ has infinite order: in
the coordinates of \cite[Def.~3.8]{BannaiKobayashi}, used in the proof of
Lemma~\ref{lem:moments}, its $\fp$-component is $x$.  The prime $p = \fp\bar\fp \ge
5$ is split in $K$ and of good reduction, $S_E$ is the set
\eqref{eq:Sexc}, $W$ and $\mathfrak{m}_W$ are as in
\S\ref{ssec:katz}, and
\[
   L^{\mathrm{K}} \;:=\; L^{\mathrm{Katz}}_\fp
\]
is the Katz-branch series of Lemma~\ref{lem:comparison}: writing
$m(g) := \log_p \chi_{\mathrm{cyc}}(g)/\log_p(1+p) \in \Z_p$, the
restriction to the cyclotomic line is, by definition,
\begin{equation}\label{eq:katzbranch}
   L^{\mathrm{K}}(T)
   \;=\;
   \int_{G_\infty} (1+T)^{m(g)}\, d\mu_\psi(g) .
\end{equation}

\subsection{From the central second jet to a criterion class}

The first two statements convert the unit condition
$\tilde c_2(p) \in \Z_p^\times$ into the nonvanishing of a single
element of $\overline{\F}_p$.  Proposition~\ref{prop:jetformula} does
the analytic half: it writes the second Taylor coefficient of the Katz
branch as an integral of $\ell^2$, where $\ell$ is the $p$-adic
logarithm of the cyclotomic character.  This is the point at which $\log_p$ enters; \S\ref{ssec:moments} replaces it by a polynomial.

\begin{proposition}[The central second jet as a second moment]
\label{prop:jetformula}
Let $E/\Q$ have CM by the maximal order $\OK$ of an imaginary
quadratic field $K$, with
\[
   L(E,1) = 0
   \qquad\text{and}\qquad
   w(E) = +1 ,
\]
and let $p = \fp\bar\fp \ge 5$ be a prime of good reduction split in
$K$.  Let $L^{\mathrm{K}}(T) \in W[[T]]$ be the restriction of the
Katz measure of $E$, twisted by the $p$-adic avatar $\hat\psi$ of $\psi_E$, to
the cyclotomic line, as in Lemma~\textup{\ref{lem:comparison}}, and let
$\ell \colon G_\infty \to p\Z_p$ be the composite of the cyclotomic
character with $\log_p$.  Then
\[
   c_0\bigl(L^{\mathrm{K}}\bigr)
   \;=\; \int_{G_\infty} d\mu_\psi \;=\; 0,
   \qquad
   c_1\bigl(L^{\mathrm{K}}\bigr)
   \;=\; \frac{1}{\log_p(1+p)}\int_{G_\infty} \ell \, d\mu_\psi
   \;=\; 0,
\]
and
\[
   c_2\bigl(L^{\mathrm{K}}\bigr)
   \;=\;
   \frac{1}{2\,\log_p(1+p)^2}
   \int_{G_\infty} \ell(g)^2 \, d\mu_\psi(g) .
\]
\end{proposition}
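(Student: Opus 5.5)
The plan is to expand the integrand of \eqref{eq:katzbranch} in powers of $T$ and then read off the vanishing of the first two coefficients from Lemma~\ref{lem:c0c1} via Lemma~\ref{lem:comparison}.

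\emph{Step 1: expansion.} Write $(1+T)^{m(g)} = \exp\bigl(m(g)\log(1+T)\bigr)$, with $m(g) = \ell(g)/\log_p(1+p)$. Since $\ell(g) \in p\Z_p$ and $\log_p(1+p)$ has valuation exactly $1$, $m(g) \in \Z_p$. The binomial expansion $(1+T)^{m} = \sum_j \binom{m}{j}T^j$ has coefficients that are continuous, $\Z_p$-valued functions of $g$. The coefficient of $T^j$ in $L^{\mathrm K}$ is therefore $\int \binom{m(g)}{j}\,d\mu_\psi$; interchanging the finite coefficient extraction with integration is legitimate for a bounded measure. For $j=0,1,2$ we have $\binom{m}{0}=1$, $\binom{m}{1}=m$ and $\binom{m}{2}=\tfrac12(m^2-m)$. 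This gives $c_0 = \int d\mu_\psi$ and $c_1 = \log_p(1+p)^{-1}\int \ell\,d\mu_\psi$. It also gives
\[
c_2 = \frac{1}{2\log_p(1+p)^2}\int \ell^2\,d\mu_\psi \;-\; \frac12\,c_1 .
\]
Here $p\ge5$ makes $2$ invertible in $W$.

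\emph{Step 2: vanishing of $c_0,c_1$.} By Lemma~\ref{lem:comparison}, $L_p(E,T) = c_p\,u(T)\,L^{\mathrm K}(T)$ with $u\in\Lambda^\times$ and $u(0)=1$. The constant $c_p = (f_0\omega_E\Omega_p)^{-1}$ is a nonzero element of $W\otimes\Q$; this is all we need, because only vanishing is at stake and not valuations, so no exclusion of $S_E$ is required. By Lemma~\ref{lem:c0c1}, $L_p(E,T)\in T^2\,(\Lambda\otimes\Q)$. Dividing by $c_p u(T)$, which is invertible in $(W\otimes\Q)[[T]]$, gives $L^{\mathrm K}\in T^2 W[[T]]\otimes\Q$, so $c_0(L^{\mathrm K}) = c_1(L^{\mathrm K}) = 0$. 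Substituting $c_1=0$ into the formula for $c_2$ from Step~1 leaves exactly the stated $c_2 = \frac{1}{2\log_p(1+p)^2}\int\ell^2\,d\mu_\psi$.

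\emph{Main obstacle.} The delicate point is Step~1's bookkeeping: the function $m(g)$ of \eqref{eq:katzbranch} must be matched correctly with $\ell/\log_p(1+p)$ on all of $G_\infty$, including its torsion part. On torsion, $\chi_{\mathrm{cyc}}$ has a Teichmüller component that $\log_p$ kills. So $\ell$ factors through the projection to $\Gamma$, and it is that projection which the cyclotomic line uses. I would check this against the convention $\chi_{\mathrm{cyc}}(\gamma)=1+p$ fixed in \S\ref{ssec:notation}, so that $m(\gamma)=1$. Beyond that, the only further justification needed is the interchange of integral and series. This is routine for $W$-valued measures, since each coefficient is the integral of a continuous function and the $T$-adic expansion converges coefficientwise.
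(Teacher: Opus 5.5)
Your proposal is correct and follows the paper's own proof: expand $(1+T)^{m(g)}$ binomially to get $c_k(L^{\mathrm K}) = \int\binom{m}{k}\,d\mu_\psi$, carry the vanishing of $c_0$ and $c_1$ over from $L_p(E,T)$ using Lemma~\ref{lem:c0c1} and Lemma~\ref{lem:comparison} (which needs only $c_p \ne 0$ and $u$ a unit, as you note), and then drop the $-\tfrac12 c_1$ term from $\int\binom{m}{2}\,d\mu_\psi$. Your concern about the torsion part of $G_\infty$ does not arise, because \eqref{eq:katzbranch} defines $m(g) = \log_p\chi_{\mathrm{cyc}}(g)/\log_p(1+p)$ on all of $G_\infty$, which is exactly $\ell/\log_p(1+p)$.
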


\begin{proof}
By \eqref{eq:katzbranch},
$L^{\mathrm{K}}(T) = \int_{G_\infty} (1+T)^{m(g)}\,d\mu_\psi(g)$
with $m(g) = \ell(g)/\log_p(1+p) \in \Z_p$.  Expanding
$(1+T)^m = \sum_k \binom{m}{k}T^k$ --- convergent in $W[[T]]$ with
bounded coefficients, $\mu$ being a measure --- gives
\[
   c_k\bigl(L^{\mathrm{K}}\bigr)
   \;=\; \int_{G_\infty} \binom{m}{k}\, d\mu_\psi ,
\]
which is the stated integral in each of the cases $k = 0$ and $k = 1$.
Both vanish: by Lemma~\ref{lem:comparison},
$L_p(E,T) = c_p\,u(T)\,L^{\mathrm{K}}(T)$ with $c_p \ne 0$ and $u(T)$ a
unit power series, so $L^{\mathrm{K}}$ and $L_p(E,T)$ have the same
order of vanishing at $T = 0$; and $L_p(E,T)$ has $c_0 = c_1 = 0$ by
Lemma~\ref{lem:c0c1}, whose hypotheses $L(E,1) = 0$ and $w(E) = +1$ are
ours and which applies because a split $p$ is good ordinary
(Deuring's criterion, \S\ref{sec:thmA}).  For $k = 2$,
\[
   c_2\bigl(L^{\mathrm{K}}\bigr)
   \;=\; \int \binom{m}{2}\, d\mu_\psi
   \;=\; \tfrac12 \int m^2\, d\mu_\psi
        - \tfrac12 \int m\, d\mu_\psi
   \;=\; \tfrac12 \int m^2\, d\mu_\psi ,
\]
and $m^2 = \ell^2/\log_p(1+p)^2$.
\end{proof}

The next step replaces $\ell$ by a polynomial, at the cost of an error
of valuation $3$.  Class field theory splits the cyclotomic logarithm
into the two CM directions.  Let $G^{(1)} \cong (1+p\Z_p)^2$ be the
image under the Artin map of the principal local units at $\fp$,
$\bar\fp$, with coordinates $(x,y)$.  For $p$ prime to
$\#\mathrm{Cl}_\ff(K)$ (the ray-class clause of \eqref{eq:Sexc}) the whole pro-$p$ part of $G_\infty$ is $G^{(1)}$, so that
\begin{equation}\label{eq:tamesplit}
   G_\infty \;=\; \Delta \times G^{(1)},
   \qquad
   \Delta \;=\; \text{the tame part of the $\ff p$-ray class group} .
\end{equation}
Since $\chi_{\mathrm{cyc}} = \Nm_{K/\Q}$ on id\`eles, restricted to the
split completions,
\begin{equation}\label{eq:ellsplit}
   \ell(g) \;=\; \log_p x(g) + \log_p y(g)
   \qquad (g \in G^{(1)}) .
\end{equation}
The sign of \eqref{eq:ellsplit} depends on the normalisation of the
Artin map; it is immaterial: only $\ell^2$ and the vanishing of
$\int_{G_\infty} \ell \, d\mu_\psi$ are used below.

Proposition~\ref{prop:grading} now truncates $\log_p$ to its linear
term.  Its second part is the statement the rest of the section works
with: the unit condition at $p$ is the nonvanishing of one element of
$\overline{\F}_p$.

\begin{proposition}[Grading of the second moment; the criterion class]
\label{prop:grading}
Let $E/\Q$ have CM by the maximal order $\OK$ of an imaginary
quadratic field $K$, with $L(E,1) = 0$ and $w(E) = +1$, let $\ff$ be
the conductor of $\psi_E$, and let $p = \fp\bar\fp \ge 5$ be a prime of
good reduction split in $K$ with $p \nmid \#\mathrm{Cl}_\ff(K)$.  Write
$G_\infty = \Delta \times G^{(1)}$ as in \eqref{eq:tamesplit}; for
$\delta \in \Delta$ let $\mu_\delta$ be the restriction of $\mu_\psi$
to the coset $\delta \times G^{(1)}$, viewed as a measure on
$(1+p\Z_p)^2$ in the coordinates $(x,y)$; put $u_1 = x-1$ and
$u_2 = y-1$, so that $v_p(u_1), v_p(u_2) \ge 1$ on the support; and set
\[
   M_2(\fp) \;:=\; \sum_{\delta \in \Delta}
   \int_{(1+p\Z_p)^2} (u_1 + u_2)^2 \, d\mu_\delta .
\]
Then:
\begin{enumerate}
\item $2\log_p(1+p)^2\, c_2\bigl(L^{\mathrm{K}}\bigr) \equiv M_2(\fp)
      \pmod{p^3 W}$;
\item $v_p\bigl(M_2(\fp)\bigr) \ge 2$, and
      $v_p\bigl(c_2(L^{\mathrm{K}})\bigr) = 0$ if and only if the
      \emph{criterion class}
      \[
         \mathfrak{c}(\fp) \;:=\; p^{-2} M_2(\fp) \bmod \mathfrak{m}_W
         \;\in\; \overline{\F}_p
      \]
      is nonzero.
\end{enumerate}
\end{proposition}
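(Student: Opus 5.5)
Starting from Proposition~\ref{prop:jetformula}, $2\log_p(1+p)^2\,c_2(L^{\mathrm K}) = \int_{G_\infty}\ell^2\,d\mu_\psi$, and using \eqref{eq:tamesplit}, the plan is to split this integral over the cosets $\delta\times G^{(1)}$ as $\sum_\delta\int \ell^2\,d\mu_\delta$. On each coset the value of $\ell$ is pinned down by \eqref{eq:ellsplit}; the tame component $\delta$ contributes only a root of unity to $\chi_{\mathrm{cyc}}$, and its logarithm vanishes. So $\ell = \log_p x + \log_p y$ on the support of every $\mu_\delta$. I will write $\log_p(1+u) = u - u^2/2 + u^3/3 - \cdots$. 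With $v_p(u_i)\ge 1$ and $p\ge 5$, the terms $u^k/k$ with $k\ge 2$ have valuation at least $2$: for $k=2$ this is immediate, and for $k\ge3$ one has $k - v_p(k)\ge 2$. Hence $\log_p x + \log_p y = (u_1+u_2) + R$ with $v_p(R)\ge 2$ pointwise.

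Squaring gives $\ell^2 = (u_1+u_2)^2 + 2(u_1+u_2)R + R^2$. The cross term has valuation at least $1+2=3$, and $R^2$ has valuation at least $4$, so pointwise $\ell^2 \equiv (u_1+u_2)^2 \pmod{p^3}$. Since $\mu_\psi$ is $W$-valued, integrating a function in $p^3\cdot C((1+p\Z_p)^2,\Z_p)$ against $\mu_\delta$ lands in $p^3W$. Summing over $\delta$ yields (1). The only point needing care is that these integrals are sup-norm continuous with bound $1$ for a $W$-valued measure; this is standard.

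For (2), the first claim follows because $(u_1+u_2)^2$ has valuation $\ge 2$ pointwise, so the same integrality bound gives $v_p(M_2(\fp))\ge 2$, and $\mathfrak c(\fp)$ is therefore well defined. The prefactor $2\log_p(1+p)^2$ has valuation exactly $2$: $v_p(\log_p(1+p)) = 1$ for $p\ge 3$, and $2$ is a unit. Combining this with (1), $v_p(c_2(L^{\mathrm K}))=0$ is equivalent to $v_p\bigl(2\log_p(1+p)^2 c_2\bigr)=2$. That in turn is equivalent to $v_p(M_2(\fp))=2$, because the two differ by an element of $p^3W$ and both have valuation $\ge 2$; this fails exactly when $p^{-2}M_2(\fp)\in\mathfrak m_W$.

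The main obstacle is justifying that $\ell$ is exactly $\log_p x+\log_p y$ on every coset, not just on $G^{(1)}$. This requires that the tame factor $\Delta$ has trivial image under $\log_p\circ\chi_{\mathrm{cyc}}$ and that the coordinates $(x,y)$ on the coset $\delta\times G^{(1)}$ are those of the $G^{(1)}$ component. I would handle this through the decomposition \eqref{eq:tamesplit}, which relies on $p\nmid\#\mathrm{Cl}_\ff(K)$: $\chi_{\mathrm{cyc}}$ is a homomorphism, $\chi_{\mathrm{cyc}}(\delta)$ has finite order prime to $p$, and $\log_p$ kills roots of unity.
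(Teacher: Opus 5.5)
Your proof is correct and follows the paper's argument. You kill $\Delta$ because $\log_p$ vanishes on roots of unity, truncate $\log_p$ to its linear term with a pointwise error bound, integrate against the $W$-valued measure, and compare valuations using $v_p(\log_p(1+p))=1$. The only difference is cosmetic: you bound the error in $\ell$ by $p^2$ and then square, while the paper bounds the error in $\ell^2$ by $p^3$ directly through its monomials, and the two are equivalent.
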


\begin{proof}
The homomorphism $\ell = \log_p \circ \chi_{\mathrm{cyc}}$ kills
$\Delta$: the order of $\Delta$ is prime to $p$, so
$\chi_{\mathrm{cyc}}(\Delta)$ lies in the group of roots of unity of
$\Z_p^\times$, on which $\log_p$ vanishes.  Hence
$\ell(\delta g) = \ell(g)$ for $g \in G^{(1)}$, and \eqref{eq:ellsplit}
applies on every coset.

For (1), expand $\log_p x = u_1 - u_1^2/2 + \cdots$ and likewise in
$u_2$.  On $1+p\Z_p$ the term $u_1^k/k$ has $v_p \ge k - v_p(k)$, which
is $\ge 2$ for $k \ge 2$ and $p \ge 5$; so
\[
   \bigl(\log_p x + \log_p y\bigr)^2
   \;=\; (u_1 + u_2)^2 + R ,
\]
where every monomial of $R$ has total degree $\ge 3$ and, by the same
estimate, $v_p(R) \ge 3$ pointwise; the lowest terms of $R$ are
$-(u_1+u_2)(u_1^2 + u_2^2)$, of coefficient $\pm 1$, so no denominator
weakens the bound, and $p \ge 5$ makes $2$ and $3$ units.  As $\mu_\psi$
is $W$-valued, integrating termwise and summing over the finitely many
cosets gives
\[
   \int_{G_\infty} \ell^2\, d\mu_\psi
   \;\equiv\; M_2(\fp) \pmod{p^3 W} ,
\]
which is (1) by Proposition~\ref{prop:jetformula}.

For (2), $v_p\bigl((u_1+u_2)^2\bigr) \ge 2$ pointwise gives
$v_p(M_2) \ge 2$.  Since $v_p\bigl(\log_p(1+p)\bigr) = 1$, the left
side of (1) has valuation $2 + v_p(c_2(L^{\mathrm{K}}))$, and the two
sides of (1) have equal valuation whenever either is $< 3$.  So
$v_p(c_2(L^{\mathrm{K}})) = 0$ if and only if $v_p(M_2) = 2$, which is
the nonvanishing of $\mathfrak{c}(\fp)$.
\end{proof}

We fix the following two conventions in the sequel.
\begin{enumerate}
\item The reduction defining $\mathfrak{c}(\fp)$ is modulo the maximal
ideal $\mathfrak{m}_W$, whose residue field is $\overline{\F}_p$
(equivalently modulo $p$, as $W/pW = \overline{\F}_p$); the prime $\fp$
of $K$ plays no role in it.
\item The $\fp$ in the arguments of $M_2$ and $\mathfrak{c}$ records
the branch along which the Katz series was formed.  Exchanging
$\iota_p$ for its conjugate exchanges the two coordinates, which
fixes $(u_1+u_2)^2$, and conjugates the values of $\mu_\psi$; so it
carries $M_2(\fp)$ and $\mathfrak{c}(\fp)$ to their images under the
induced automorphism of $W$, respectively of $\overline{\F}_p$.  The
\emph{vanishing} of $\mathfrak{c}(\fp)$ therefore depends on $p$
alone.
\end{enumerate}

\subsection{Decoupling of the Euler and comparison factors}

Besides $M_2(\fp)$, two factors enter $c_2(p)$: the interpolation
factor $(1-\alpha_p^{-1})^2$, whose logarithmic derivative in the
cyclotomic variable involves $\log_p\alpha_p$, and the comparison factor
$c_p\,u(T)$ of Lemma~\ref{lem:comparison}.  The next lemma shows that
neither affects the valuation of $c_2(p)$.  Its first part is an identity about
power series, its second the arithmetic consequence.

\begin{lemma}[Decoupling]\label{lem:decoupling}
\begin{enumerate}
\item Let $R$ be a commutative ring and let $A, B \in R[[T]]$ satisfy
$c_0(B) = c_1(B) = 0$.  Then
\[
   c_2(AB) \;=\; A(0)\, c_2(B) .
\]
\item Let $E/\Q$ have CM by the maximal order $\OK$ of an imaginary
quadratic field $K$, with $L(E,1) = 0$ and $w(E) = +1$, and let
$p = \fp\bar\fp \ge 5$ be a prime of good reduction split in $K$ with
$p \notin S_E$.  Then
\[
   v_p\bigl(c_2(p)\bigr) \;=\; v_p\bigl(c_2(L^{\mathrm{K}})\bigr) .
\]
\end{enumerate}
\end{lemma}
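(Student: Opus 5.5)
Part (1) is a direct coefficient computation. Write $A = \sum a_i T^i$ and $B = \sum b_j T^j$ with $b_0 = b_1 = 0$. The coefficient of $T^2$ in $AB$ is $\sum_{i+j=2} a_i b_j = a_0 b_2 + a_1 b_1 + a_2 b_0$, and the last two terms vanish. This gives $c_2(AB) = A(0)\,c_2(B)$ with no hypothesis on $R$.

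For part (2) I will apply (1) over $R = W$ with $B = L^{\mathrm{K}}$ and $A = c_p\,u(T)$, using Lemma~\ref{lem:comparison}: $L_p(E,T) = c_p\,u(T)\,L^{\mathrm{K}}(T)$. Proposition~\ref{prop:jetformula} gives $c_0(L^{\mathrm{K}}) = c_1(L^{\mathrm{K}}) = 0$. Its hypotheses ($L(E,1)=0$, $w(E)=+1$, $p$ split of good reduction) are ours. Then (1) yields $c_2(p) = c_p\,u(0)\,c_2(L^{\mathrm{K}}) = c_p\,c_2(L^{\mathrm{K}})$, since $u(0)=1$. As $p \notin S_E \supseteq S_{\mathrm{cmp}}$, Lemma~\ref{lem:comparison} gives $v_p(c_p) = 0$, and taking valuations finishes the argument.

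The interpolation factor $(1-\alpha_p^{-1})^2$ needs no separate treatment. It is not a cyclotomic power series multiplying $L_p(E,T)$, but only a constant relating $c_0$ to $L(E,1)$, and it enters $\tilde c_2$ through Proposition~\ref{prop:normalisation}, not $c_2$. I will remark on this so the reader sees that the $\log_p\alpha_p$ concern is vacuous here.

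The only step needing care is where $c_p$ and $u$ live. The factor $c_p = (f_0\omega_E\Omega_p)^{-1}$ involves the algebraic number $\omega_E$, embedded via $\iota_p$, and the unit $\Omega_p \in W^\times$. I must check that the identity of Lemma~\ref{lem:comparison} holds in $W[[T]]\otimes\Q$, so that (1) may be applied over $R = W[\omega_E, 1/p]$ or a suitable localisation. Since (1) is valid over any commutative ring, this is harmless. I also need to confirm that $u(T) = (1+T)^{-m(b)}$ lies in $\Lambda$, which is clear because $m(b)\in\Z_p$.
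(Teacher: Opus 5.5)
Your proposal is correct and follows the paper's proof exactly. Part (1) is the same expansion of the degree-two coefficient of $AB$. Part (2) is the same application of (1) with $A = c_p\,u(T)$ and $B = L^{\mathrm{K}}$, taking $c_0(L^{\mathrm{K}}) = c_1(L^{\mathrm{K}}) = 0$ from Proposition~\ref{prop:jetformula} and $v_p(c_p) = 0$ from $S_{\mathrm{cmp}} \subseteq S_E$.

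Your side remark that the multiplier $(1-\alpha_p^{-1})^2$ never enters $A$ is consistent with Lemma~\ref{lem:comparison}, where $u(T) = (1+T)^{-m(b)}$ carries no $\alpha_p$. Likewise, your observation that (1) holds over any commutative ring settles the question of where $c_p$ lives.
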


\begin{proof}
(1) $c_2(AB) = c_0(A)c_2(B) + c_1(A)c_1(B) + c_2(A)c_0(B)$, and the
last two terms vanish.

(2) By Lemma~\ref{lem:comparison},
$L_p(E,T) = c_p\,u(T)\,L^{\mathrm{K}}(T)$ with
$c_p \in \mathrm{Frac}(W)^\times$ and $u$ a unit power series, and
$v_p(c_p) = 0$ because $p \notin S_E \supseteq S_{\mathrm{cmp}}$.
Apply (1) with $A = c_p\,u$ and $B = L^{\mathrm{K}}$, whose
$c_0$ and $c_1$ vanish by Proposition~\ref{prop:jetformula}:
$c_2(p) = c_p\,u(0)\,c_2(L^{\mathrm{K}})$, and $c_p\,u(0)$ is a unit.
\end{proof}

\begin{remark}[What the decoupling excludes]\label{rmk:fq}
The comparison and Euler
factors enter $c_2(p)$ only through $c_0(L^{\mathrm{K}})$ and
$c_1(L^{\mathrm{K}})$, both of which vanish because the central zero of
$L_p(E,T)$ is double; so the coefficients $c_1(A)$ and
$c_2(A)$, which carry $\log_p\alpha_p$ and with it
$(\alpha_p^{p-1}-1)/p$, do not appear.  At a nonvanishing central
value, or at a simple zero, this fails: $c_0(B)$ or $c_1(B)$ is then
nonzero and the derivatives of $A$ enter the leading coefficient.
\end{remark}

\subsection{The second moment as Eisenstein--Kronecker class sums}
\label{ssec:moments}

This subsection computes the criterion class exactly: $\mathfrak{c}(\fp)$ is a unit multiple of the reduction of $p^{-2}B(\fp)$, where $B(\fp)$ is an explicit combination of three Eisenstein--Kronecker class sums of weight $2p-1$, of Euler factors and of factorials (Proposition~\ref{prop:exactreduction}).

\emph{From $M_2(\fp)$ to twisted moments.}\quad Write $(x, y)$ for the
local-unit coordinates at $\fp$, $\bar\fp$ and
$\langle x\rangle = x\,\omega(x)^{-1}$ for the principal-unit part, so
that the coordinates of Proposition~\ref{prop:grading} are
$(\langle x\rangle, \langle y\rangle)$ and
$u_1 + u_2 = \langle x\rangle + \langle y\rangle - 2$.  For
$a + b \le 2$ put
\[
   m_{a,b}
   \;:=\;
   \int_{G_\infty} \langle x\rangle^a \langle y\rangle^b \, d\mu_\psi
   \;=\;
   \int_{G_\infty} x^a y^b\,\omega^{-a}(x)\,\omega^{-b}(y)\,
   d\mu_\psi ,
\]
the \emph{twisted polynomial moments}: polynomial moments of
$\mu_\psi$ twisted by the Teichm\"uller powers that strip the tame
parts of the coordinates.  Expanding the square,
\begin{equation}\label{eq:M2moments}
   M_2(\fp)
   \;=\;
   m_{2,0} + 2\, m_{1,1} + m_{0,2}
   - 4\, m_{1,0} - 4\, m_{0,1} + 4\, m_{0,0} .
\end{equation}
The coefficients are integers, units for $p \ge 5$; each $m_{a,b}$ is
the integral of a unit-valued continuous function against the
$W$-valued measure $\mu_\psi$, so it lies in $W$; and
$m_{0,0} = \int d\mu_\psi$ vanishes here
(Proposition~\ref{prop:jetformula}).  The display therefore needs no
coefficient bookkeeping.  The cost sits in the twists.  Their
conductor divides $\fp\bar\fp$, and
resolving a twist by $\omega^{-a}$ produces either Gauss sums of
conductor $p$, which are not $\fp$-units
\cite[Thm.~3.11]{BannaiKobayashi}, or sums over $\fp$-power torsion
translates \cite[Prop.~3.10]{BannaiKobayashi} with root-of-unity
weights \cite[Prop.~2.20, Cor.~2.22]{BannaiKobayashi}.  That cost
belongs to this presentation and not to $M_2(\fp)$:
Proposition~\ref{prop:exactreduction} below reaches
$\mathfrak{c}(\fp)$ along a single direction in the exponents, where
there is no twist to resolve and no limit to take.  Nor does either
route supply the divisibility by $p^2$ that defines
$\mathfrak{c}(\fp)$: that divisibility is
Proposition~\ref{prop:grading}(2), and it comes from the level
structure, $\log_p$ of a principal unit lying in $p\Z_p$ in each of
the two coordinates.  The tame data of conductor dividing
$\ff$ behaves differently: the unit character $\varepsilon$ of
$\psi_E$ defined below, and Gauss sums of characters of that
conductor, are $\fp$-units at every split $p$ of good reduction, that
conductor being prime to $p$.

\emph{The ray class group and the class sums.}\quad By \eqref{eq:katzinterp}
each $m_{a,b}$ is, up to the explicit factors of that formula, a
critical value of a Teichm\"uller twist of $\psi^{-1}$ at a parameter
shifted by $(a,b)$, and by \S\ref{ssec:BK} such values are carried by
Eisenstein--Kronecker data at torsion parameters.  The ray class group
$D_E$ \eqref{eq:DEdef} and the class sums $P_{a,b}$ of
\eqref{eq:classsums} are that data.

\emph{The moments in closed form.}\quad The next lemma computes the
polynomial moments of (BK3) in \S\ref{ssec:BK}, with every constant
explicit.  Part~(1) is the
identity before the restriction to the local units, one class
parameter at a time; part~(2) is the identity after that restriction,
for $\mu_\psi$ itself, and it is where the Euler factors and the
normalisation constant enter.  The moment of index $(k,l)$ pairs with
the class function $r_{l,k+1}$, of weight $k+l+1$.  The Katz Galois
coordinates invert the second leg
\cite[Def.~3.8]{BannaiKobayashi}, which is why the second exponent in
part~(2) is negative.

\begin{lemma}[Polynomial moments as Eisenstein--Kronecker values]
\label{lem:moments}
Let $E/\Q$ have CM by the maximal order $\OK$ of an imaginary
quadratic field $K$, let $\ff$ be the conductor of $\psi_E$, and let
$\Gamma = \Omega_\infty\OK$ be the period lattice.  Assume:
\begin{enumerate}
\item[(a)] $p = \fp\bar\fp \ge 5$ is a prime of good reduction split
      in $K$;
\item[(b)] $p$ is prime to $\Nm\ff$.
\end{enumerate}
Let $\mu_\psi$ be the central twist of the Katz
measure $\mu_{\ff\fp^\infty}$ and $(x,y)$ the local-unit coordinates
at $\fp$, $\bar\fp$.  For $[g] \in D_E$ let $\nu_{[g]}$ be the measure on
$\Z_p \times \Z_p$ whose power series, in the sense of
\textup{\cite[Prop.~3.1]{BannaiKobayashi}}, is the expansion of
$\Theta_{\varepsilon(g)t_g,\,0}(z, \Nm\ff\,w)$ with its polar term
removed, taken in the coordinates of the formal group at $\fp$
trivialised by the $p$-adic period $\Omega_p$ of
\eqref{eq:katzinterp}; it is $W$-valued
\textup{\cite[Cor.~2.18]{BannaiKobayashi}}.  Put $\pi := \psi_E(\fp)$
and, for integers $k, l \ge 0$,
\[
   \mathcal{E}(k,l)
   \;:=\;
   \Bigl(1 - \frac{\pi^{\,k+l+1}}{p^{\,l+1}}\Bigr)
   \Bigl(1 - \frac{\pi^{\,k+l+1}}{p^{\,k+1}}\Bigr) .
\]
Then, for all integers $k, l \ge 0$:
\begin{enumerate}
\item for every $[g] \in D_E$,
\[
   \int_{\Z_p \times \Z_p} x^{k}\,y^{l}\, d\nu_{[g]}
   \;=\;
   \Omega_p^{\,k+l}\,(-1)^{k+l}\,k!\,(\Nm\ff)^{l}\,
   r_{l,\,k+1}\bigl([g]\bigr) ;
\]
\item with $\kappa := \Omega_p$, the $p$-adic period of \eqref{eq:katzinterp},
\[
   \int_{G_\infty} x^{k}\,y^{-l}\, d\mu_\psi
   \;=\;
   \kappa\,\mathcal{E}(k,l)\,\Omega_p^{\,k+l}\,(-1)^{k+l}\,k!\,
   (\Nm\ff)^{l}\, P_{l,\,k+1} .
\]
\end{enumerate}
\end{lemma}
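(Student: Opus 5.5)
Part~(1) is a bookkeeping statement about one Taylor coefficient, and we treat it first. By \cite[Prop.~3.1]{BannaiKobayashi}, the measure attached to a two-variable power series $F(S,S')$ in the trivialised formal-group parameters has moments given by derivatives at the origin. Concretely, $\int x^k y^l\,d\nu$ equals $\partial_z^k\partial_w^l F$ evaluated at $0$, once $S = \exp(\Omega_p z)-1$ and similarly for $S'$. The factor $\Omega_p^{k+l}$ is the chain-rule cost of the trivialisation: each derivative in $z$ or $w$ brings out one $\Omega_p$. The rescaling $w \mapsto \Nm\ff\,w$ in the second variable contributes $(\Nm\ff)^l$ in the same way. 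It then remains to read off the $z^k w^l$ Laurent coefficient of $\Theta_{z_0,0}(z,w)$ with its polar term removed. By (BK1), in the explicit form \cite[Thm.~1.17]{BannaiKobayashi}, the coefficient of $z^{k}w^{l}/(k!\,l!)$ is $(-1)^{k+l}e^*_{l,k+1}(z_0,0)/(k!\,A(\Gamma)^{l})$, up to the precise factorial convention. Multiplying by $k!\,l!$ gives $(-1)^{k+l}k!\,e^*_{l,k+1}/A(\Gamma)^l$. Finally we substitute $z_0 = \varepsilon(g)t_g$ and use the identity after \eqref{eq:EKscaling}, $e^*(\varepsilon(g)t_g,0) = \varepsilon(g)^{-(a+b)}e^*(t_g,0)$. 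This turns the coefficient into $r_{l,k+1}([g])$ by \eqref{eq:classsums} and yields (1). I would check the factorial and sign conventions directly against \cite[Thm.~1.17]{BannaiKobayashi} rather than rederive them.

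For part~(2), we pass from the local measures $\nu_{[g]}$ on $\Z_p\times\Z_p$ to $\mu_\psi$ on $G_\infty$, following \cite[Def.~3.6, Def.~3.8, Thm.~3.7]{BannaiKobayashi}. That construction defines the Katz measure on each ray class coset by restricting $\nu_{[g]}$ to $\Z_p^\times\times\Z_p^\times$, with the second Galois coordinate inverted; this is where $y^{-l}$ comes from. It then transports the result along the Artin map, normalised by the constant $\kappa=\Omega_p$. Summing over $[g]\in D_E$ turns $r_{l,k+1}$ into $P_{l,k+1}$. Twisting by $\hat\psi$ does not change anything here, because in these coordinates the $\fp$-component of $\hat\psi$ is $x$ itself, as noted before \eqref{eq:katzbranch}. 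So the exponents in (2) already include the central twist.

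The restriction to units produces the Euler factor. We have $\int_{\Z_p^\times\times\Z_p^\times} = \int_{\Z_p\times\Z_p}$ minus the contributions of $p\Z_p\times\Z_p$ and $\Z_p\times p\Z_p$, plus that of $p\Z_p\times p\Z_p$. Each of these sub-integrals is computed by the distribution relation \cite[Prop.~3.10]{BannaiKobayashi}, through the action of $[\pi]$ and $[\bar\pi]$ on the formal group. The $[\pi]$-action on $\Theta$ replaces $t_g$ by $\pi t_g$, which permutes the classes of $D_E$ (this needs $p\nmid\Nm\ff$, hypothesis (b)). It also rescales by the homogeneity \eqref{eq:EKhomogeneity}, with $\lambda$ a unit multiple of $\pi$, and the multiplier works out to $\pi^{k+l+1}/p^{k+1}$. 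The symmetric computation for $\bar\pi$ gives $\pi^{k+l+1}/p^{l+1}$. After summing over $D_E$ the permutations disappear, and inclusion--exclusion gives the product $\mathcal{E}(k,l)$. The cross term is the product of the two factors because the $\pi$- and $\bar\pi$-actions commute.

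The main obstacle is this last step: getting the exact exponents in the Euler factor. One must track how the unit character $\varepsilon(\pi)$ and the inversion of the second leg convert $\bar\pi$-powers into the stated powers of $\pi$ and $p$, using $\pi\bar\pi=p$. My first move would be to test the resulting formula at $k=l=0$ against the interpolation formula \cite[Thm.~3.11, Cor.~3.12]{BannaiKobayashi}. Matching that case fixes both $\kappa$ and the orientation of the Euler factors, and the general case follows from the same distribution relation with the homogeneity exponents $\bar\lambda^{a}\lambda^{-b}$.
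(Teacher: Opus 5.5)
Your overall route is the paper's. Part (1) comes from the moment formula of \cite[Prop.~3.1]{BannaiKobayashi} and the expansion of \cite[Thm.~1.17]{BannaiKobayashi}. Your factorials are swapped there: the coefficient of $z^kw^l$ itself is $(-1)^{k+l}(\Nm\ff)^l e^*_{l,k+1}/(l!\,A(\Gamma)^l)$, and multiplying by $k!\,l!$ gives the moment. This slip is harmless. Part (2) goes by inclusion--exclusion, homogeneity, the permutation of $D_E$, and the transfer to Katz coordinates. But two steps of (2) do not go through as written.

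The first is the twist. You describe the Katz measure as $\kappa\,\nu^\times$ in the coordinates $(x,y^{-1})$. You then say twisting by $\hat\psi$ changes nothing because its $\fp$-component is $x$. With your description the twist does change the answer. Multiplying the integrand by $x$ makes the $(k,l)$ moment of $\mu_\psi$ equal to the $(k+1,l)$ moment of $\kappa\,\nu^\times$. That is a multiple of $\mathcal{E}(k+1,l)\,P_{l,k+2}$, not of $\mathcal{E}(k,l)\,P_{l,k+1}$. The missing fact, which is the paper's transfer step, is that \cite[Def.~3.8]{BannaiKobayashi} gives the Katz measure as $\Omega_p\,x^{-1}\nu^\times$. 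The $x$ from $\hat\psi$ cancels this $x^{-1}$. The finite part $\varepsilon$ of the twist is already carried by the class parameters $\varepsilon(g)t_g$. Hence $\mu_\psi = \Omega_p\,\nu^\times$ and $\kappa = \Omega_p$.

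The second is the Euler factor, which you assert rather than derive. It is not a symmetric pair of $[\pi]$- and $[\bar\pi]$-computations with $\lambda$ a unit multiple of $\pi$. In the paper, \cite[Lem.~3.4, Prop.~3.5]{BannaiKobayashi} express the three averaged terms as theta functions on $\bar\fp\Gamma$, $\bar\fp\Gamma$ and $\bar\fp^2\Gamma$. The parameter and the variable are multiplied by $p$ in the first leg, the second, and both respectively. Homogeneity is applied with $\lambda = \bar\pi$ to \emph{both} single-leg terms, so by itself it gives the same multiplier $\bar\lambda^{\,l}\lambda^{-(k+1)} = \pi^{k+l+1}/p^{k+1}$ for each. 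The asymmetry comes from the rescaled variable, contributing $p^k$ in the first leg and $p^l$ in the second, against $A(\bar\fp\Gamma)^{-l} = p^{-l}A(\Gamma)^{-l}$. This turns the first-leg factor, at the class $[g\fp]$, into $\pi^{k+l+1}/p^{l+1}$. It leaves the second-leg factor, at $[g\bar\fp^{-1}]$, at $\pi^{k+l+1}/p^{k+1}$. Your assignment is the reverse; that is harmless for the product, but shows the computation has not been done. Your fallback cannot settle this either: at $k = l = 0$ both factors equal $1-\pi/p$. Matching against \cite[Cor.~3.12]{BannaiKobayashi} there fixes $\kappa$ and $(1-\pi/p)^2$, but is blind to how $k$ and $l$ enter. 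For instance, it cannot distinguish $\mathcal{E}(k,l)$ from $(1-\pi^{k+l+1}/p^{k+1})^2$.
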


\begin{proof}
(1)  The moments of a measure are the logarithmic derivatives of its
power series at the origin \cite[Prop.~3.1]{BannaiKobayashi}, and each
of the two logarithmic derivations is $\Omega_p$ times the
corresponding complex-analytic one, both legs being trivialised by the
same $p$-adic period; so the $(k,l)$ moment of $\nu_{[g]}$ is
$\Omega_p^{k+l}\,k!\,l!$ times the coefficient of $z^{k}w^{l}$ in the
defining expansion.  In that expansion the second parameter is $0$,
and the first, $\varepsilon(g)t_g$, is not in $\Gamma$, $g$ being prime
to $\ff$; so the only polar term of \cite[Thm.~1.17]{BannaiKobayashi}
is the one removed, and the coefficient of $z^{k}w^{l}$ is
\[
   (-1)^{k+l}\,\frac{(\Nm\ff)^{l}}{l!}\,
   \frac{e^*_{l,\,k+1}\bigl(\varepsilon(g)t_g,\, 0\bigr)}
        {A(\Gamma)^{l}} ,
\]
the factor $(\Nm\ff)^{l}$ coming from $w \mapsto \Nm\ff\,w$.  By the
identity displayed above, that is
$(-1)^{k+l}(\Nm\ff)^{l}\,r_{l,k+1}([g])/l!$, which gives (1).

(2)  Write $\nu := \sum_{[g] \in D_E} \nu_{[g]}$, the measure
\cite[Def.~3.6]{BannaiKobayashi} attaches to $\psi_E$, and $\nu^\times$
for its restriction to $\Z_p^\times \times \Z_p^\times$.

\emph{The Euler factors.}  Restriction to
$\Z_p^\times \times \Z_p^\times$ is the alternating sum of the four
averages over the $p$-torsion in the two formal-group variables
\cite[Lem.~3.4]{BannaiKobayashi}, and
\cite[Prop.~3.5]{BannaiKobayashi} evaluates the three averaged terms as
theta functions at the lattices $\bar\fp\Gamma$, $\bar\fp\Gamma$ and
$\bar\fp^{2}\Gamma$, with the parameter and the variable multiplied by
$p$ in the first leg, in the second, and in both.  (Removing the polar
term in one normalisation rather than another changes $\nu$ by a
measure supported on $\{0\} \times \Z_p$, which the restriction
annihilates, so the choice made in (1) is immaterial here.)  Apply the
homogeneity \eqref{eq:EKhomogeneity} with
$\lambda = \bar\pi$, $\bar\pi$ and $\bar\pi^{2}$, using
$\pi\bar\pi = p$: the three terms return to the lattice $\Gamma$, at
the class parameters of $[g\fp]$, $[g\bar\fp^{-1}]$ and
$[g\fp\bar\fp^{-1}]$, and carry the factors $\pi^{k+l+1}/p^{\,l+1}$,
$\pi^{k+l+1}/p^{\,k+1}$ and the product of the two.  Those three
substitutions permute $D_E$, so they leave the class sums unchanged,
and summing (1) over $D_E$ gives
\[
   \int_{\Z_p^\times \times \Z_p^\times} x^{k}\,y^{l}\, d\nu^\times
   \;=\;
   \mathcal{E}(k,l)\,\Omega_p^{\,k+l}\,(-1)^{k+l}\,k!\,
   (\Nm\ff)^{l}\,P_{l,\,k+1} .
\]

\emph{The transfer.}  The Katz Galois coordinate at $(\fp, \bar\fp)$ is
$(x, y^{-1})$ in the coordinates of $\nu$, and the Katz measure is
$\Omega_p\,x^{-1}$ times $\nu^\times$ in those coordinates
\cite[Def.~3.8]{BannaiKobayashi}.  Twisting by $\hat\psi$ multiplies
the integrand by that character, whose
$\fp$-component is $x$; the factor $x^{-1}$ cancels, and the finite
part of the twist is the character $\varepsilon$, already carried by
the class parameters of \cite[Def.~3.6]{BannaiKobayashi}; this is the
first display in the proof of \cite[Prop.~3.13]{BannaiKobayashi}.  So
$\mu_\psi = \Omega_p\,\nu^\times$ in these coordinates, and
$\kappa = \Omega_p$, a unit of $W$.  Taking
$x^{k}y^{-l}$ as the integrand gives (2).
\end{proof}

\emph{The reduction.}\quad The criterion class can now be computed
exactly.  Only one direction in the exponents survives the passage from
$M_2(\fp)$ to $\mathfrak{c}(\fp)$, and along it the moments are plain
polynomial moments at nonnegative indices: no twist is resolved, no
limit is taken, and no torsion translate occurs.

\begin{proposition}[Exact reduction of the criterion class]
\label{prop:exactreduction}
Let $E/\Q$ have CM by the maximal order $\OK$ of an imaginary
quadratic field $K$, and let $\ff$ be the conductor of $\psi_E$.
Assume:
\begin{enumerate}
\item[(a)] $L(E,1) = 0$ and $w(E) = +1$;
\item[(b)] $p = \fp\bar\fp \ge 5$ is a prime of good reduction split
      in $K$;
\item[(c)] $p \notin S_E$, the set of \eqref{eq:Sexc}.
\end{enumerate}
Let $\mathfrak{c}(\fp)$ be the criterion class of
Proposition~\textup{\ref{prop:grading}}, let $\mathcal{E}(k,l)$ be as
in Lemma~\textup{\ref{lem:moments}}, let $P_{a,b}$ be the class sums
\eqref{eq:classsums}, and put
\begin{equation}\label{eq:bracket}
\begin{aligned}
   B(\fp) \;:=\;
   &\mathcal{E}(2p-2,\,0)\,(2p-2)!\; P_{0,\,2p-1} \\
   &{}-\; 2\,\mathcal{E}(p-1,\,p-1)\,(p-1)!\,(\Nm\ff)^{\,p-1}\,
        P_{p-1,\,p} \\
   &{}+\; \mathcal{E}(0,\,2p-2)\,(\Nm\ff)^{\,2p-2}\, P_{2p-2,\,1}
   \;\in\; \Qbar ,
\end{aligned}
\end{equation}
with $v_\fp$ the valuation of \S\textup{\ref{ssec:notation}} on
$\Qbar^\times$ through $\iota_p$, normalised by $v_\fp(p) = 1$.  Then:
\begin{enumerate}
\item $v_\fp\bigl(B(\fp)\bigr) \ge 2$;
\item $\mathfrak{c}(\fp)$ is the reduction modulo $\mathfrak{m}_W$ of
      $u\,p^{-2}B(\fp)$ for some $u \in W^\times$;
\item $\mathfrak{c}(\fp) \ne 0$ if and only if
      $v_\fp\bigl(B(\fp)\bigr) = 2$.
\end{enumerate}
\end{proposition}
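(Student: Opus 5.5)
The plan is to pass from $M_2(\fp)$, which is built from the principal-unit coordinates $u_1+u_2 = \langle x\rangle + \langle y\rangle - 2$, to polynomial moments at nonnegative exponents by means of the congruence $\langle x\rangle \equiv x^{p-1}$ on the relevant scale. On $\Z_p^\times$ we have $x^{p-1} = \langle x\rangle^{p-1}$, and $\langle x\rangle^{p-1} - 1 \equiv -(\langle x\rangle - 1) \pmod{p^2}$, since $(1+u)^{p-1} = 1 + (p-1)u + O(pu\cdot u)$. Hence $u_1 \equiv 1 - x^{p-1} \pmod{p^2}$, with both sides in $p\Z_p$. Because the Katz Galois coordinate on the second leg is $y^{-1}$ (Lemma~\ref{lem:moments}), I use $u_2 \equiv 1 - y^{-(p-1)}$ in that leg; the inversion only changes the sign of $u_2$ modulo $p^2$, and the sign of \eqref{eq:ellsplit} is immaterial. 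Both $u_1+u_2$ and its polynomial replacement $X := x^{p-1} - y^{-(p-1)}$ have valuation at least $1$ and differ by $O(p^2)$. Squaring therefore gives $(u_1+u_2)^2 \equiv X^2 \pmod{p^3}$ pointwise. Integrating against the $W$-valued $\mu_\psi$ and summing over $\Delta$ (which just reassembles $G_\infty$) yields
\[
M_2(\fp) \equiv \int_{G_\infty} \bigl(x^{2p-2} - 2x^{p-1}y^{-(p-1)} + y^{-(2p-2)}\bigr)\,d\mu_\psi \pmod{p^3W}.
\]
The pairing of the $x$ and $y^{-1}$ legs has to be matched carefully against the sign conventions of Lemma~\ref{lem:moments}.

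Next I apply Lemma~\ref{lem:moments}(2) with $(k,l) = (2p-2,0)$, $(p-1,p-1)$ and $(0,2p-2)$. All three have total degree $2p-2$, so the common factor $\kappa\,\Omega_p^{2p-2}(-1)^{2p-2} = \Omega_p^{2p-1}$ is a unit of $W$ and can be pulled out. What remains is exactly the combination \eqref{eq:bracket}: the factorials $(2p-2)!$, $(p-1)!$, $0!$, the powers $(\Nm\ff)^l$, the Euler factors $\mathcal{E}(k,l)$, and the class sums $P_{0,2p-1}$, $P_{p-1,p}$, $P_{2p-2,1}$. This gives $M_2(\fp) \equiv \Omega_p^{2p-1}B(\fp) \pmod{p^3W}$. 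For this to make sense, $B(\fp)$ must be $\fp$-integral under $\iota_p$. That follows because each moment lies in $W$: $\mu_\psi$ is $W$-valued and the integrands are units, while the factorials, Euler factors and $\Nm\ff$ are rational or algebraic numbers whose $\fp$-adic nature is explicit. Hypothesis (c) is what guarantees $p \nmid \Nm\ff$, so Lemma~\ref{lem:moments} applies.

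The conclusions now follow quickly. For (1), Proposition~\ref{prop:grading}(2) gives $v_p(M_2) \ge 2$, and the congruence modulo $p^3$ transfers this to $v_\fp(B(\fp)) \ge 2$. For (2), dividing by $p^2$ and reducing modulo $\mathfrak{m}_W$ gives $\mathfrak{c}(\fp) = \overline{u\,p^{-2}B(\fp)}$ with $u = \Omega_p^{2p-1} \in W^\times$. For (3), $\mathfrak{c}(\fp) \ne 0$ exactly when $v_\fp(p^{-2}B(\fp)) = 0$, which is when $v_\fp(B(\fp)) = 2$.

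The main obstacle is the first step, namely making the congruence $\langle x\rangle - 1 \equiv 1 - x^{p-1}$ (and its analogue on the inverted second leg) hold uniformly on all of $G_\infty$, including the tame cosets. This must be done with the exact coordinate conventions of \cite[Def.~3.8]{BannaiKobayashi}, so that the three monomials land on the indices $(k,l)$ of Lemma~\ref{lem:moments}(2) with the stated signs and without any leftover Teichm\"uller twist. I would first check that on each coset $\delta\times G^{(1)}$ the factor $\omega(x)^{p-1} = 1$ kills the tame part, so that no twist survives. I would also verify that the cross term comes with coefficient $-2$ after the $y\mapsto y^{-1}$ inversion, and not $+2$.
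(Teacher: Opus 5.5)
Your proposal is correct and follows the paper's proof. In both, $(u_1+u_2)^2$ is replaced modulo $p^3$ by $x^{2p-2} - 2x^{p-1}y^{-(p-1)} + y^{-(2p-2)}$, Lemma~\ref{lem:moments}(2) is applied at the three indices of total degree $2p-2$, the unit $\Omega_p^{2p-1}$ is extracted, and (1)--(3) are read off from Proposition~\ref{prop:grading}(2). The paper passes through $\ell = \log_p x + \log_p y$ first, while you expand $(1+u)^{p-1} \equiv 1-u \bmod p^2$ directly; the two are equivalent.

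One correction to your prose. In the Galois coordinate you have $u_2 \equiv y^{-(p-1)} - 1$, not $1 - y^{-(p-1)}$. Hence $u_1+u_2 \equiv -X$ modulo $p^2$, rather than $X$. This is harmless after squaring, and your $X$ already has the right form. But the cross term $-2$ is fixed by this relative sign between the two legs, which your appeal to the immaterial overall sign of \eqref{eq:ellsplit} does not cover.
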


\begin{proof}
Write $N(k,l) := \int_{G_\infty} x^{k}y^{-l}\, d\mu_\psi$ for $k, l \ge
0$, the moments of Lemma~\ref{lem:moments}(2).

\emph{Step 1: collapse to one direction.}  Summing
Proposition~\ref{prop:grading}'s definition over the cosets,
$M_2(\fp) = \int_{G_\infty}(u_1+u_2)^2\,d\mu_\psi$ with
$u_1 + u_2 = \langle x\rangle + \langle y\rangle - 2$, as above.  On
the support of $\mu_\psi$ the coordinates are local units, and
$\log_p z = \log_p\langle z\rangle$.  The estimate in the proof of
Proposition~\ref{prop:grading} gives $\langle z\rangle - 1 = \log_p z
+ O(p^{2})$, so by \eqref{eq:ellsplit}
\[
   u_1 + u_2 \;=\; \ell + O(p^{2}) ,
   \qquad
   \ell \;=\; \log_p x + \log_p y \;\in\; p\Z_p ,
\]
and hence $(u_1+u_2)^2 = \ell^2 + O(p^{3})$, pointwise and uniformly.

\emph{Step 2: a polynomial for $\ell$.}  For a local unit $z$ one has
$z^{p-1} = \langle z\rangle^{p-1} \in 1 + p\Z_p$ and
$(p-1)\log_p z = \log_p\bigl(z^{p-1}\bigr) = (z^{p-1} - 1) + O(p^{2})$,
so, as $(p-1)^{-1} \equiv -1 \bmod p$ and $z^{p-1} - 1 \in p\Z_p$,
$\log_p z = (1 - z^{p-1}) + O(p^{2})$; and
$(1 - z^{p-1}) + (1 - z^{-(p-1)}) = -(1-z^{p-1})^{2}z^{-(p-1)}$ lies in
$p^{2}\Z_p$, so also $\log_p z = -\bigl(1 - z^{-(p-1)}\bigr) +
O(p^{2})$.  Applying the first form in $x$ and the second in $y$,
\[
   \ell \;=\; y^{-(p-1)} - x^{p-1} + O(p^{2}) ,
\]
and since $\ell \in p\Z_p$, squaring gives, uniformly on the support,
\[
   \ell^{2}
   \;=\;
   x^{2(p-1)} - 2\,x^{p-1}y^{-(p-1)} + y^{-2(p-1)} + O(p^{3}) .
\]
With Step 1 and the $W$-valuedness of $\mu_\psi$, integration gives
\[
   M_2(\fp)
   \;\equiv\;
   N(2p-2,\,0) - 2\,N(p-1,\,p-1) + N(0,\,2p-2)
   \pmod{p^{3}W} .
\]

\emph{Step 3: the three moments.}  The three indices have the same sum
$k + l = 2p-2$, so $(-1)^{k+l} = 1$ at each and the same factor
$\kappa\,\Omega_p^{2p-2}$ occurs in each; by
Lemma~\ref{lem:moments}(2), which applies as $p \nmid \Nm\ff$ for $p \notin S_E$,
\[
   N(2p-2,\,0) - 2\,N(p-1,\,p-1) + N(0,\,2p-2)
   \;=\;
   \kappa\,\Omega_p^{\,2p-2}\, B(\fp) ,
\]
the three terms being those of \eqref{eq:bracket}.  Put
$u := \kappa\,\Omega_p^{2p-2} = \Omega_p^{2p-1}$, a unit of $W$.

For (1): $v_p(M_2(\fp)) \ge 2$ by Proposition~\ref{prop:grading}(2),
so the displayed congruence forces $v_p(u\,B(\fp)) \ge 2$, and $u$ is a
unit.  For (2) and (3): dividing the congruence of Step 2 by $p^2$,
\[
   \mathfrak{c}(\fp)
   \;=\; p^{-2}M_2(\fp) \bmod \mathfrak{m}_W
   \;=\; u\,p^{-2}B(\fp) \bmod \mathfrak{m}_W ,
\]
and $p^{-2}B(\fp)$ lies in $W$, so a unit multiple of it is nonzero
modulo $\mathfrak{m}_W$ exactly when $v_\fp(B(\fp)) = 2$.
\end{proof}

\begin{theorem}[The exact criterion; $=$ Theorem~\ref{intro:criterion}]\label{thm:criterion}
Let $E/\Q$ have CM by the maximal order $\OK$ of an imaginary quadratic
field $K$, with $\rank E(\Q) = 2$, $L(E,1) = 0$ and $w(E) = +1$, and let
$p = \fp\bar\fp \ge 5$ be a prime of good reduction split in $K$ with
$p \notin S_E$.  Let $B(\fp)$ be the bracket \eqref{eq:bracket}.  The
following are equivalent:
\begin{enumerate}
\item $\tilde c_2(p) \in \Z_p^\times$;
\item the cyclotomic $p$-adic height pairing on $E(\Q)$ is
      nondegenerate with $v_p\bigl(\Reg_\gamma\bigr) = 0$, and
      $\Sha(E/\Q)[p^\infty] = 0$;
\item $v_\fp\bigl(B(\fp)\bigr) = 2$.
\end{enumerate}
\end{theorem}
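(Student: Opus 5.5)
The plan is to assemble the theorem from results already proved, treating the two equivalences separately and routing both through condition (1).

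The equivalence of (1) and (2) is exactly Proposition~\ref{prop:dictionary}. Its hypotheses are rank two, $L(E,1)=0$, $w(E)=+1$, and $p\notin S_E$ split of good reduction, and these coincide with ours, so nothing further is needed there.

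For (1)$\Leftrightarrow$(3) we chain valuation statements. Proposition~\ref{prop:normalisation} applies because $p\notin S_E$: such a $p$ is non-anomalous and prime to $\prod_v c_v\cdot\#E(\Q)_{\mathrm{tors}}$. It turns (1) into $v_p(c_2(p))=0$; here $c_2(p)\in\Q_p$, so $v_\fp$ and $v_p$ agree. Lemma~\ref{lem:decoupling}(2) then gives $v_p(c_2(p))=v_p(c_2(L^{\mathrm K}))$. Proposition~\ref{prop:grading}(2) converts $v_p(c_2(L^{\mathrm K}))=0$ into $\mathfrak c(\fp)\ne0$, using that the ray-class clause $p\nmid\#\mathrm{Cl}_\ff(K)$ is part of $p\notin S_E$. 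Finally, Proposition~\ref{prop:exactreduction}(3), whose hypotheses (a)--(c) are ours, converts $\mathfrak c(\fp)\ne0$ into $v_\fp(B(\fp))=2$. Each arrow in this chain is a biconditional, so composing them gives (1)$\Leftrightarrow$(3).

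The only point needing care is integrality. To say that $\tilde c_2(p)\in\Z_p^\times$ is the same as $v_p(c_2(p))=0$, we need $c_2(p)\in\Z_p$, that is, $L_p(E,T)\in\Lambda$. This follows from Remark~\ref{rmk:integrality}, or from Lemma~\ref{lem:msdmtt}, since $\bar\rho_{E,p}$ is irreducible for $p\notin S_{\mathrm{red}}$. Likewise, $c_2(L^{\mathrm K})\in W$ because $\mu_\psi$ is $W$-valued.

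The remaining work is bookkeeping. We check that every auxiliary hypothesis invoked ($p\ge5$, $p\nmid 6N\Nm\ff$, $p\notin\operatorname{supp}(\omega_E)$) is contained in the single condition $p\notin S_E$, and that the rank-two hypothesis is needed only for (1)$\Leftrightarrow$(2). The substantive content has already been proved in Propositions~\ref{prop:grading} and~\ref{prop:exactreduction}, so I expect no obstacle beyond matching these hypotheses.
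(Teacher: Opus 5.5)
Your proposal is correct and is essentially the paper's own proof: (1)$\Leftrightarrow$(2) is Proposition~\ref{prop:dictionary}, and (1)$\Leftrightarrow$(3) is the same chain of biconditionals through Proposition~\ref{prop:normalisation}, Lemma~\ref{lem:decoupling}(2), Proposition~\ref{prop:grading}(2) and Proposition~\ref{prop:exactreduction}(3), with every hypothesis covered by $p \notin S_E$. Your integrality step is not actually needed: an element of $\Q_p$ lies in $\Z_p^\times$ exactly when its valuation is $0$, and that is all Proposition~\ref{prop:normalisation} uses.
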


\begin{proof}
The equivalence of (1) and (2) is Proposition~\ref{prop:dictionary}.
For (1) and (3): by Proposition~\ref{prop:normalisation}, (1) holds if
and only if $v_p(c_2(p)) = 0$, since $p \notin S_E$ is not anomalous
and is prime to the Tamagawa and torsion terms; by
Lemma~\ref{lem:decoupling}(2), if and only if
$v_p(c_2(L^{\mathrm{K}})) = 0$; by Proposition~\ref{prop:grading}(2),
if and only if $\mathfrak{c}(\fp) \ne 0$; and by
Proposition~\ref{prop:exactreduction}(3), if and only if
$v_\fp(B(\fp)) = 2$.
\end{proof}

Proposition~\ref{prop:classsumsL} converts the bracket \eqref{eq:bracket}
into Hecke $L$-values.  The three class sums in $B(\fp)$ have the same
weight $2p-1$, and the factors $(\Nm\ff)^{l}$ of \eqref{eq:bracket}
cancel against the $(\Nm\ff)^{-a}$ of the proposition, so one period
remains.

\begin{corollary}[The bracket as critical values]\label{cor:bracketL}
Let $E/\Q$ have complex multiplication by the maximal order $\OK$ of an
imaginary quadratic field $K$ of class number one and discriminant
$d_K$, let $\psi_E$ be its Grossencharacter, of conductor
$\ff = (f_0)$, let $\Gamma = \Omega_\infty\OK$ be the period lattice, let
$p = \fp\bar\fp \ge 5$ be a prime of good reduction split in $K$, let
$\mathcal{E}(k,l)$ be as in Lemma~\textup{\ref{lem:moments}}, and let
$B(\fp)$ be the bracket \eqref{eq:bracket}.  Put
$L(s) := L_\ff(\bar\psi_E^{\,2p-1}, s)$.  Then
\[
\begin{aligned}
   B(\fp) \;=\; \Bigl(\frac{f_0}{\Omega_\infty}\Bigr)^{2p-1}
   \Bigl[\;&\mathcal{E}(2p-2,\,0)\,(2p-2)!\;L(2p-1) \\
   &{}- 2\,\mathcal{E}(p-1,\,p-1)\,(p-1)!\,
     \Bigl(\frac{2\pi}{\sqrt{|d_K|}}\Bigr)^{p-1} L(p) \\
   &{}+ \mathcal{E}(0,\,2p-2)\,
     \Bigl(\frac{2\pi}{\sqrt{|d_K|}}\Bigr)^{2p-2} L(1)\;\Bigr] .
\end{aligned}
\]
\end{corollary}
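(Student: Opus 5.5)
The plan is to substitute Proposition~\ref{prop:classsumsL} term by term into the bracket \eqref{eq:bracket} and collect the common factors. Each of the three class sums has index sum $a+b = 2p-1$: $P_{0,2p-1}$ has $(a,b) = (0,2p-1)$, $P_{p-1,p}$ has $(a,b) = (p-1,p)$, and $P_{2p-2,1}$ has $(a,b) = (2p-2,1)$. So each is $L_\ff(\bar\psi_E^{\,2p-1}, b) = L(b)$ times the factor
\[
   \bigl(f_0/\Omega_\infty\bigr)^{2p-1}
   \bigl(2\pi/(\Nm\ff\sqrt{|d_K|})\bigr)^{a},
\]
at the points $b = 2p-1$, $p$ and $1$ respectively. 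The common factor $(f_0/\Omega_\infty)^{2p-1}$ comes out of the bracket at once, since its exponent depends only on the weight.

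The second step is the cancellation of norms. In \eqref{eq:bracket} the term with $P_{a,b}$ carries $(\Nm\ff)^{l}$, where $l = a$: the exponents are $0$, $p-1$ and $2p-2$. Proposition~\ref{prop:classsumsL} supplies $(\Nm\ff)^{-a}$ with the same $a$. The two cancel, leaving $\bigl(2\pi/\sqrt{|d_K|}\bigr)^{a}$, which is $1$, $(2\pi/\sqrt{|d_K|})^{p-1}$ and $(2\pi/\sqrt{|d_K|})^{2p-2}$ respectively. The factorials $(2p-2)!$, $(p-1)!$ and $1$ and the Euler factors $\mathcal{E}(k,l)$ are carried along unchanged, as are the coefficients $1$, $-2$ and $1$. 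This reproduces the displayed formula.

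The only point needing care is that Proposition~\ref{prop:classsumsL} applies at every $(a,b)$ used, that is, $a \ge 0$ and $b \ge 1$. This holds, since $b \in \{2p-1, p, 1\}$. For $b = 1$ and $b = p$ the value $L(b)$ lies outside the region of absolute convergence and is understood through the analytic continuation, which is exactly how the proposition is stated. I would also note that no hypothesis on $p$ beyond $p \ge 5$ split is needed, because the identity is archimedean and $\mathcal{E}$ enters only as a scalar. There is no genuine obstacle; the bookkeeping of the index convention is the one place to be careful. The term with factor $(\Nm\ff)^{l}$ in \eqref{eq:bracket} multiplies $P_{l,k+1}$, so the $l$ in the norm power is the first index $a$ of the class sum. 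That is what makes the cancellation exact.
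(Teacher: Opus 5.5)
Your proposal is correct and is essentially the paper's own proof: apply Proposition~\ref{prop:classsumsL} at $(a,b) = (0,2p-1)$, $(p-1,p)$ and $(2p-2,1)$. Since every index sum is $2p-1$, this gives $(\Nm\ff)^{a}P_{a,b} = (f_0/\Omega_\infty)^{2p-1}(2\pi/\sqrt{|d_K|})^{a}L(b)$, which you then substitute into the three terms of \eqref{eq:bracket}. One small caveat on your aside about hypotheses: $\mathcal{E}(k,l)$ involves $\pi = \psi_E(\fp)$, so you do need $\fp \nmid \ff$, and good reduction is what supplies it.
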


\begin{proof}
Proposition~\ref{prop:classsumsL} at $(a,b) = (0,\,2p-1)$,
$(p-1,\,p)$ and $(2p-2,\,1)$ gives
\[
   (\Nm\ff)^{a}\,P_{a,b}
   \;=\;
   \Bigl(\frac{f_0}{\Omega_\infty}\Bigr)^{2p-1}
   \Bigl(\frac{2\pi}{\sqrt{|d_K|}}\Bigr)^{a}\,
   L(b)
\]
in each case, and these are the three class sums of \eqref{eq:bracket}
with their factors $(\Nm\ff)^{a}$.
\end{proof}

So condition (3) of Theorem~\ref{thm:criterion} is a condition on three
critical values of $\bar\psi_E^{\,2p-1}$, at $s = 2p-1$, $p$ and $1$:
the two ends and the centre of its critical strip $1 \le s \le 2p-1$, of
a weight that moves with $p$.  The criterion of Coates, Liang and
Sujatha \cite[Thm.~2.2]{CLS1} tests one critical value,
$L(\bar\psi_E^{\,p}, p)$, at the end $s = p$ of the critical strip of
$\bar\psi_E^{\,p}$; up to the period factor of
Proposition~\ref{prop:classsumsL} that value is $P_{0,p}$, the class sum
paired by Lemma~\ref{lem:moments}(2) with the moment $x^{p-1}$ along the
$\fp$-direction alone.  The comparison is taken up in
\S\ref{sec:questions}.

The unit factors in Proposition~\ref{prop:exactreduction}(2) are explicit,
and they reduce modulo $\mathfrak{m}_W$ to a quantity that
Part~\ref{part:evidence} computes.

\begin{corollary}[The residue of the criterion class]\label{cor:residue}
Let $E/\Q$ have CM by the maximal order $\OK$ of an imaginary quadratic
field $K$, with $L(E,1) = 0$ and $w(E) = +1$, let $f_0$ be the generator
of the conductor $\ff$ of $\psi_E$ fixed in \S\ref{ssec:katz}, let
$\omega_E$ be the period ratio of Definition~\textup{\ref{def:periodratio}},
and let $p = \fp\bar\fp \ge 5$ be a prime of good reduction split in $K$
with $p \notin S_E$, the set of \eqref{eq:Sexc}.  Let $a_p$ be the trace
of Frobenius of $E$ at $p$ and $B(\fp)$ the bracket \eqref{eq:bracket}.
Then
\[
   2\,c_2(p) \;\equiv\; (f_0\,\omega_E)^{-1}\,a_p^{-2}\;p^{-2}B(\fp)
   \pmod{\mathfrak{m}_W},
\]
the right-hand side read in $W$ through $\iota_p$.
\end{corollary}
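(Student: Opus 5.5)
We chain the congruences already established, keep track of every unit factor, and then reduce the leftover period power modulo $\mathfrak{m}_W$. First, Lemma~\ref{lem:decoupling}(2), in the exact form its proof gives, says $c_2(p) = c_p\,u(0)\,c_2(L^{\mathrm{K}})$ with $u(0)=1$. Lemma~\ref{lem:comparison} gives $c_p = (f_0\,\omega_E\,\Omega_p)^{-1}$, so
\[
   2\,c_2(p) \;=\; (f_0\,\omega_E)^{-1}\,\Omega_p^{-1}\cdot 2\,c_2(L^{\mathrm{K}}).
\]
Since $p\notin S_E\supseteq S_{\mathrm{cmp}}$, the factor $(f_0\omega_E)^{-1}$ is a $p$-adic unit (read through $\iota_p$). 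Hence it suffices to prove a congruence for $2c_2(L^{\mathrm{K}})$ modulo $\mathfrak{m}_W$ and then multiply through.

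Second, Proposition~\ref{prop:grading}(1) gives $2\log_p(1+p)^2\,c_2(L^{\mathrm{K}}) \equiv M_2(\fp) \pmod{p^3W}$. Now $\log_p(1+p) = p(1 - p/2 + \cdots)$, so $\log_p(1+p)^2 = p^2 v$ with $v\equiv 1 \pmod p$. Dividing by $p^2 v$, and using $v_p(M_2(\fp))\ge 2$, we get $2c_2(L^{\mathrm{K}}) \equiv p^{-2}M_2(\fp) \pmod{\mathfrak{m}_W}$. Proposition~\ref{prop:exactreduction}, specifically the congruence of its Step~2 combined with Step~3, gives $p^{-2}M_2(\fp) \equiv \Omega_p^{2p-1}\,p^{-2}B(\fp) \pmod{\mathfrak{m}_W}$; here $p^{-2}B(\fp)\in W$ by part~(1) of that proposition. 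Putting the pieces together,
\[
   2\,c_2(p) \;\equiv\; (f_0\,\omega_E)^{-1}\,\Omega_p^{\,2p-2}\,p^{-2}B(\fp) \pmod{\mathfrak{m}_W}.
\]
So the corollary reduces to the single congruence $\Omega_p^{2(p-1)} \equiv a_p^{-2} \pmod{\mathfrak{m}_W}$.

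This last congruence is the main obstacle, because $\Omega_p$ is a unit of $W=\widehat{\Z_p^{\mathrm{ur}}}$ and need not lie in $\Z_p$. I would use the Frobenius behaviour of the $p$-adic period. Let $\varphi$ be the arithmetic Frobenius of $W$. Then $\varphi(\Omega_p)\equiv \Omega_p^{\,p} \pmod{\mathfrak{m}_W}$, and hence $\Omega_p^{\,p-1}\equiv \varphi(\Omega_p)/\Omega_p$. The period comes from a trivialisation of the formal group of the reduction, and Frobenius acts on it through the action of Frobenius on $\widehat{\mathcal{A}}$ at $\fp$ (\cite[II \S4]{deShalit}, \cite[\S3]{BannaiKobayashi}). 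This action is multiplication by the unit root $\alpha_p$, so I expect $\varphi(\Omega_p)=\alpha_p^{-1}\Omega_p$ for the normalisation of \eqref{eq:katzinterp}. That gives $\Omega_p^{p-1}\equiv\alpha_p^{-1}$.

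Finally $\alpha_p+\beta_p=a_p$ with $v_p(\beta_p)=1$, so $\alpha_p\equiv a_p \pmod p$, and $a_p$ is a unit because $p$ is ordinary. Therefore $\Omega_p^{2p-2}\equiv a_p^{-2}$, which completes the argument. The sign of the exponent in $\varphi(\Omega_p)=\alpha_p^{\mp1}\Omega_p$ is exactly what has to be pinned down against the Bannai--Kobayashi conventions. I would check it first by comparing with the interpolation factor $p^{n+1}/\alpha_p^{n+1}$ in the proof of Lemma~\ref{lem:comparison}, since the statement's $a_p^{-2}$ indicates which sign is intended.
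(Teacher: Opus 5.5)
Your chain of congruences is the paper's. You push the explicit constant $c_p=(f_0\omega_E\Omega_p)^{-1}$ of Lemma~\ref{lem:comparison} through Lemma~\ref{lem:decoupling}, then use Proposition~\ref{prop:grading}(1) with Steps~2--3 of Proposition~\ref{prop:exactreduction} and divide by $\log_p(1+p)^2\equiv p^2$. That leaves $\Omega_p^{\,p-1}\equiv a_p^{-1}\pmod{\mathfrak{m}_W}$ to prove, and your reduction $\Omega_p^{\,p-1}\equiv\varphi(\Omega_p)/\Omega_p$ and the final $\alpha_p\equiv a_p\pmod p$ are also as in the paper.

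The gap is the one step that is not bookkeeping: you only ``expect'' $\varphi(\Omega_p)=\alpha_p^{-1}\Omega_p$, and neither check you propose would establish it.
\begin{itemize}
\item Reading the sign off the statement is circular.
\item The interpolation identity of Lemma~\ref{lem:comparison} does not decide it. $\Omega_p$ enters there only to the first power, beside the measure, so applying $\varphi$ merely trades the unknown ratio $\varphi(\Omega_p)/\Omega_p$ for the equally unknown action of $\varphi$ on the values of $\mu$.
\item Your trivialisation heuristic cannot decide it either. For a trivialisation $\eta\colon\widehat{\mathcal A}\to\widehat{\mathbb G}_m$, whether $\alpha_p$ or $\alpha_p^{-1}$ appears depends on whether $\Omega_p$ is defined by $\eta^*\bigl(dT/(1+T)\bigr)=\Omega_p\,\hat\omega$ or by $\hat\omega=\Omega_p\,\eta^*\bigl(dT/(1+T)\bigr)$.
\end{itemize}
The sign matters: the other choice would put $a_p^{2}$ in place of $a_p^{-2}$, and these differ modulo $p$ unless $a_p^{4}\equiv 1$.

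What closes the step is the transformation law of the period in the normalisation actually in force. The paper quotes it from \cite[\S 3.3]{BannaiKobayashi}: $\Omega_p^{\sigma_{\mathfrak a}}=\Lambda(\mathfrak a)\,\Nm\mathfrak a^{-1}\,\Omega_p$ for every ideal $\mathfrak a$ prime to $\ff\bar\fp$. Here $\sigma_{\mathfrak a}$ is the Artin symbol in $\Gal(K(\ff,E[\bar\fp^\infty])/K)$ extended to $W$, and $\Lambda(\mathfrak a)=\psi_E(\mathfrak a)$ because $E$ is defined over $\Q$.

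Take $\mathfrak a=\fp$. The extension is unramified at $\fp$, so $\sigma_\fp$ acts on $W$ as the arithmetic Frobenius $\varphi$. Also $\Lambda(\fp)/\Nm\fp=\pi/p=\bar\pi^{-1}$, and $\iota_p$ sends this to $\alpha_p^{-1}$: $\pi$ generates $\fp$, so $\iota_p(\bar\pi)$ is the unit root. This gives $\varphi(\Omega_p)=\alpha_p^{-1}\Omega_p$ exactly, and with it your argument is complete.
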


\begin{proof}
By Lemma~\ref{lem:comparison}, $L_p(E,T) = c_p\,u(T)\,L^{\mathrm{K}}(T)$
with $c_p = (f_0\omega_E\Omega_p)^{-1}$ and $u(0) = 1$, and
$c_0(L^{\mathrm{K}}) = c_1(L^{\mathrm{K}}) = 0$ by
Proposition~\ref{prop:jetformula}; so Lemma~\ref{lem:decoupling}(1) gives
$c_2(p) = (f_0\omega_E\Omega_p)^{-1}\,c_2(L^{\mathrm{K}})$.  By
Proposition~\ref{prop:grading}(1) and Steps~2 and~3 of the proof of
Proposition~\ref{prop:exactreduction},
\[
   2\log_p(1+p)^2\,c_2(L^{\mathrm{K}})
   \;\equiv\; \Omega_p^{\,2p-1}\,B(\fp) \pmod{p^3 W},
\]
and $\log_p(1+p) \equiv p \pmod{p^2}$, so dividing by $p^2$,
\[
   2\,c_2(p) \;\equiv\; (f_0\omega_E)^{-1}\,\Omega_p^{\,2p-2}\;p^{-2}B(\fp)
   \pmod{\mathfrak{m}_W}.
\]
It remains to show $\Omega_p^{\,p-1} \equiv a_p^{-1} \pmod{\mathfrak{m}_W}$.
The period $\Omega_p$ of \cite[\S 3.3]{BannaiKobayashi} satisfies
$\Omega_p^{\sigma_{\mathfrak a}} = \Lambda(\mathfrak a)\,\Nm\mathfrak{a}^{-1}\,\Omega_p$
for every ideal $\mathfrak a$ prime to $\ff\bar\fp$, where
$\sigma_{\mathfrak a}$ is the Artin symbol of $\mathfrak a$ in the Galois
group of $K(\ff, E[\bar\fp^\infty])$ over $K$, extended by continuity to $W$,
and $\Lambda(\mathfrak a)$ is the scalar by which the isogeny attached
there to $\mathfrak a$ acts on $\omega_E$; for $E$ defined over $\Q$ that
isogeny is the endomorphism $\psi_E(\mathfrak a)$, so
$\Lambda(\mathfrak a) = \psi_E(\mathfrak a)$.  Take $\mathfrak a = \fp$.
The extension is unramified at $\fp$, so $\sigma_\fp$ acts on $W$ as the
Frobenius, and $\Lambda(\fp)/\Nm\fp = \pi/p = \bar\pi^{-1}$, whose image
under $\iota_p$ is $\alpha_p^{-1}$.  Reducing modulo $\mathfrak{m}_W$, the
residue $\bar\omega \in \overline{\F}_p^\times$ of $\Omega_p$ satisfies
$\bar\omega^{\,p} = \bar\alpha_p^{-1}\bar\omega$, that is
$\Omega_p^{\,p-1} \equiv \alpha_p^{-1} \pmod{\mathfrak{m}_W}$; and
$\alpha_p \equiv a_p \pmod p$, the other root of $X^2 - a_pX + p$ being
divisible by $p$.
\end{proof}

\part{Computations}\label{part:evidence}
\section{The unit condition at the split primes below $30{,}000$}
\label{sec:evidence}

This section computes the unit condition for the five elliptic curves
of rank two with CM by $\Z[i]$ of Coates, Liang and Sujatha
\cite[Thm.~1.3]{CLS2}, at every split prime of good reduction below
$30{,}000$.  It follows
Algorithm~11.1 of Stein and Wuthrich \cite[\S 11]{SteinWuthrich}: the
Mordell--Weil group, the $p$-adic regulator, the leading coefficient of
$L_p(E,T)$ from modular symbols with the error bounds of their \S 3,
and the bookkeeping of \eqref{eq:padicbsd}.  Their algorithm closes with
Kato's divisibility, which is not available for a CM curve; here
Theorem~\ref{prop:consequence} takes its place.

What is computed is the regulator.  It bears on the unit condition
through Proposition~\ref{prop:dictionary}, and the direction
$(2) \Rightarrow (1)$, the one that turns a regulator measurement into a
statement about $\tilde c_2(p)$, requires $\Sha(E/\Q)[p^\infty] = 0$.
For the five curves that hypothesis is a theorem at every split prime of
good reduction below $30{,}000$ (\S\ref{ssec:clsinput}), so for such a
prime outside $S_E$
\[
   v_\fp\bigl(\Reg_\fp\bigr) = 2
   \quad\text{and}\quad
   \Sha(E/\Q)[p^\infty] = 0
   \qquad\Longrightarrow\qquad
   \tilde c_2(p) \in \Z_p^\times ,
\]
the equality being measured and the vanishing proved.  The scan of
\S\ref{ssec:scan} covers $8{,}052$ primes, $8{,}050$ of them outside
$S_E$, and the implication is stated as Proposition~\ref{prop:scaneq}.
Above $30{,}000$ only the opposite direction of
Proposition~\ref{prop:dictionary} remains, which assumes nothing about
$\Sha$: there a scan can refute the unit condition but not verify it.

\subsection{The five curves}\label{ssec:testbed}
Coates, Liang and Sujatha \cite[Thm.~1.3]{CLS2} work with the five
curves of rank two
\[
   E_i \colon y^2 = x^3 - D_i x, \qquad
   D_1 = -14,\ D_2 = 17,\ D_3 = -33,\ D_4 = -34,\ D_5 = -39,
\]
all with CM by $\Z[i]$.  We work with the same five, except that in
place of $E_1$ we take the $2$-isogenous curve
\[
   E \colon y^2 = x^3 - 56x ,
\]
of the same conductor $12{,}544 = 2^8\cdot 7^2$; the isogeny class of
$E_1$ is $\{E_1, E\}$.  The curve $E$ satisfies the hypotheses of
Theorems~\ref{prop:consequence} and~\ref{thm:criterion} itself, and the
input on $\Sha$ transfers from $E_1$ by Lemma~\ref{lem:isogeny}.
Table~\ref{tab:curves} lists the invariants used below; they are
quoted from the LMFDB \cite{LMFDB}, whose page for each curve is linked
from its label.

\begin{table}[ht]
\centering
\caption{The five curves $y^2 = x^3 - Dx$ and the invariants used in this
paper (LMFDB).  $\omega_1$ is the least positive real period
and $\Omega_\infty$ generates the period lattice
$\Lambda_E = \Omega_\infty\Z[i]$.}\label{tab:curves}
\small
\begin{tabular}{@{}l@{\;\;}c@{\;\;}c@{\;\;}c@{\;\;}c@{\;\;}c@{}}
\hline
$D$ & $56$ & $17$ & $-33$ & $-34$ & $-39$ \\
\hline
LMFDB label & \href{https://www.lmfdb.org/EllipticCurve/Q/12544/g/1}{\texttt{12544.g1}}
  & \href{https://www.lmfdb.org/EllipticCurve/Q/9248/c/1}{\texttt{9248.c1}}
  & \href{https://www.lmfdb.org/EllipticCurve/Q/69696/i/2}{\texttt{69696.i2}}
  & \href{https://www.lmfdb.org/EllipticCurve/Q/73984/d/2}{\texttt{73984.d2}}
  & \href{https://www.lmfdb.org/EllipticCurve/Q/48672/n/2}{\texttt{48672.n2}} \\
Cremona label & \texttt{12544b2} & \texttt{9248g1} & \texttt{69696eb1} & \texttt{73984m1} & \texttt{48672i1} \\
conductor $N$ & $2^{8}\cdot 7^{2}$ & $2^{5}\cdot 17^{2}$ & $2^{6}\cdot 3^{2}\cdot 11^{2}$ & $2^{8}\cdot 17^{2}$ & $2^{5}\cdot 3^{2}\cdot 13^{2}$ \\
$E(\Q)_{\mathrm{tors}}$ & \multicolumn{5}{c}{$\Z/2\Z = \langle (0,0)\rangle$} \\
$\prod_v c_v$ & $4$ & $4$ & $4$ & $4$ & $8$ \\
$w(E)$ & $+1$ & $+1$ & $+1$ & $+1$ & $+1$ \\
$\rank E(\Q)$ & $2$ & $2$ & $2$ & $2$ & $2$ \\
$P_1$ & $(8,8)$ & $(-1,4)$ & $(4,14)$ & $(8,28)$ & $(3,12)$ \\
$P_2$ & $(9,15)$ & $(-4,2)$ & $(16,68)$ & $(32,184)$ & $(27,144)$ \\
$L(E,1)$ & \multicolumn{5}{c}{$0$ (exactly)} \\
analytic $\Sha$ & \multicolumn{5}{c}{$1.000\ldots$ ($28$ digits)} \\
$a_5$ & $-2$ & $-4$ & $-4$ & $-2$ & $-2$ \\
rational isogenies & \multicolumn{5}{c}{degrees $1$ and $2$ only} \\
$\ff$ & $(56)$ & $(1+i)^3(17)$ & $(132)$ & $(136)$ & $(1+i)^3(39)$ \\
$\Nm\ff = N/4$ & $3136$ & $2312$ & $17424$ & $18496$ & $12168$ \\
$\#\mathrm{Cl}_\ff(K)$ & $384 = 2^{7}\cdot 3$ & $256 = 2^{8}$ & $1920 = 2^{7}\cdot 3\cdot 5$ & $2048 = 2^{11}$ & $1152 = 2^{7}\cdot 3^{2}$ \\
$\Omega_\infty$ & $\omega_1$ & $\omega_1$ & $\tfrac{1+i}{2}\,\omega_1$ & $\tfrac{1+i}{2}\,\omega_1$ & $\tfrac{1+i}{2}\,\omega_1$ \\
\hline
\end{tabular}
\end{table}

Each curve satisfies the hypotheses of Theorems~\ref{prop:consequence}
and~\ref{thm:criterion}: $\rank E(\Q) = 2$ is \cite[Thm.~1.3]{CLS2};
$w(E) = +1$ is the product of the local root numbers, recomputed by
PARI/GP; and $L(E,1) = 0$ exactly, since $L(E,1)/\Omega_E$ is a
modular symbol, an exact rational computed by linear algebra, and it is
$0$.  The points $P_1, P_2$ generate $E(\Q)$ modulo torsion.

The conductor $\ff$ of $\psi_E$ is determined by $N = |d_K|\,\Nm\ff$
(\S\ref{ssec:katz}) and by the stability of $\ff$ under complex
conjugation, $E$ being defined over $\Q$: writing $N/4 = 2^{e}m^2$
with $m$ odd, the only conjugation-stable ideal of $\Z[i]$ of that norm
is $(1+i)^{e}(m)$, a split prime $q = \pi\bar\pi$ dividing $m$
contributing $\pi^{a}\bar\pi^{a} = (q)^a$.  This agrees with
\cite[Lem.~3.2]{CLS1}, and with the unit character $\varepsilon$ of
\S\ref{ssec:package}, read off from $a_p = \psi_E(\fp) + \overline{\psi_E(\fp)}$
at the split primes below $10^6$: it is well defined modulo $\ff$ and
modulo no proper divisor of $\ff$, and equals $(D/\cdot)_4^{-1}$ at
every one of those primes.

\subsection{The excluded sets}\label{ssec:excludedsets}
For a curve with CM by $\Z[i]$ only $p = 5$ can be anomalous
(Lemma~\ref{lem:noanomalous}), and it is anomalous exactly when
$a_5 = -4$: so $S_{\mathrm{an}} = \{5\}$ for $D = 17$ and $-33$, and
$S_{\mathrm{an}} = \emptyset$ for the other three curves.  At a
non-anomalous split prime of good reduction the multiplier
$(1-\alpha_p^{-1})^2$ of \eqref{eq:interp} is a unit, so $c_2(p)$ and
$\tilde c_2(p)$ have the same valuation, and $\Reg_\gamma \in \Z_p$ by
the proof of Proposition~\ref{prop:dictionary}; at an anomalous prime
both can fail, and the scan of \S\ref{ssec:scan} finds that they do.

Table~\ref{tab:excluded} bounds the five sets of \eqref{eq:Sexc} by
exact computation.  $S_{\mathrm{bad}}$ is the set of primes dividing
$6N\prod_v c_v\cdot\#E(\Q)_{\mathrm{tors}}\cdot d_K$, which is the set
of bad primes together with $3$.  $S_{\mathrm{red}} \subseteq \{2\}$,
as the only rational isogenies of each curve have degree $1$ and $2$.
$S_{\mathrm{cmp}}$ is contained in the set of primes dividing
$6N\,\Nm\ff$, which is $S_{\mathrm{bad}}$ again, together with
$\operatorname{supp}(\omega_E)$; and $\operatorname{supp}(\omega_E)$
lies in the set of bad primes, because the Katz measure is taken to be
that of $E$ itself, the choice $\mathcal{A} = E$ in force throughout
\S\ref{sec:bracket}, so that $\Omega_E$ and the $\Omega_\infty$ of
Definition~\ref{def:periodratio} differ by a factor supported on the
bad primes.  $S_{\mathrm{cl}}$ is the set of primes dividing
$\#\mathrm{Cl}_\ff(K)$.  Hence
\[
   S_E \subseteq \{2,3,7\},\quad \{2,3,5,17\},\quad \{2,3,5,11\},\quad
   \{2,3,17\},\quad \{2,3,13\}
\]
for $D = 56$, $17$, $-33$, $-34$, $-39$.  The primes $13$ and $17$ are
$\equiv 1 \bmod 4$ but are bad primes of the curves concerned, so the
only split primes of good reduction that $S_E$ excludes are $p = 5$ for
$D = 17$ and for $D = -33$; for $y^2 = x^3 - 56x$ it excludes none.

\begin{table}[ht]
\centering
\caption{The excluded sets of the five curves: bounds on the components
of $S_E$ in \eqref{eq:Sexc}, and the split primes of good reduction that
$S_E$ contains.}\label{tab:excluded}
\begin{tabular}{@{}l l l l l l l@{}}
\hline
$D$ & $S_{\mathrm{bad}}$ & $S_{\mathrm{an}}$ & $S_{\mathrm{red}}$ & $S_{\mathrm{cmp}}$ & $S_{\mathrm{cl}}$ & split good $p$ in $S_E$ \\
\hline
$56$ & $\{2,3,7\}$  & $\emptyset$ & $\subseteq\{2\}$ & $\subseteq\{2,3,7\}$  & $\{2,3\}$   & none \\
$17$  & $\{2,3,17\}$ & $\{5\}$     & $\subseteq\{2\}$ & $\subseteq\{2,3,17\}$ & $\{2\}$     & $5$ \\
$-33$ & $\{2,3,11\}$ & $\{5\}$     & $\subseteq\{2\}$ & $\subseteq\{2,3,11\}$ & $\{2,3,5\}$ & $5$ \\
$-34$ & $\{2,3,17\}$ & $\emptyset$ & $\subseteq\{2\}$ & $\subseteq\{2,3,17\}$ & $\{2\}$     & none \\
$-39$ & $\{2,3,13\}$ & $\emptyset$ & $\subseteq\{2\}$ & $\subseteq\{2,3,13\}$ & $\{2,3\}$   & none \\
\hline
\end{tabular}
\end{table}

\subsection{The Coates--Liang--Sujatha input}\label{ssec:clsinput}
The hypothesis $\Sha(E/\Q)[p^\infty] = 0$ that direction
$(2) \Rightarrow (1)$ of Proposition~\ref{prop:dictionary} needs is a
theorem below $30{,}000$.

\begin{theorem}[Coates--Liang--Sujatha {\cite[Thm.~1.3]{CLS2}}]\label{thm:cls}
Let $E_i \colon y^2 = x^3 - D_ix$ with $D_1 = -14$, $D_2 = 17$,
$D_3 = -33$, $D_4 = -34$, $D_5 = -39$.  For each $i$ and every prime
$p \equiv 1 \pmod 4$ of good reduction for $E_i$ with $p < 30{,}000$,
the group $\Sha(E_i/\Q)[p^\infty]$ is finite, and
\[
   \Sha(E_i/\Q)[p^\infty] \;=\; 0
   \qquad\text{unless } (i,p) \in \{(1,29),\ (1,277),\ (4,577),\ (5,17)\} .
\]
\end{theorem}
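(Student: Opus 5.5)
The statement is quoted from \cite[Thm.~1.3]{CLS2}, so the plan is to reconstruct how that result follows from Theorem~\ref{thm:clscriterion}, applied prime by prime; this is also how the cited paper argues. Fix $i$ and a prime $p \equiv 1 \pmod 4$ of good reduction for $E_i$ with $p < 30{,}000$. First check the five hypotheses of Theorem~\ref{thm:clscriterion}:
\begin{itemize}
\item good reduction holds by assumption;
\item $p \ge 5$ is prime to $\#\mu_{\Q(i)} = 4$;
\item $p$ splits in $\Z[i]$ because $p \equiv 1 \bmod 4$;
\item $r \equiv g \pmod 2$ holds since $w(E_i) = +1$ and $g = 2$ (Table~\ref{tab:curves});
\item $p \nmid \alpha(E_i)$ holds because $\alpha(E_i)$ is read off from the ratio $\Omega^+_\infty/\Omega_\infty$ in Table~\ref{tab:curves}, namely $1$ or $1-i$ up to a unit, which is supported at $2$.
\end{itemize}
For the transfer to $E_1$, the $2$-isogeny between $E_1$ and $y^2=x^3-56x$ does not affect $p$-primary parts for odd $p$.

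Next compute $\ord_p c_p^+(E_i)$ exactly for every such $p$. Following \cite[\S 4]{CLS2}, write $c_p^+(E_i)$ as an explicit rational multiple of the trace, from the ray class field $K(\ff)$ to $K$, of a division value of $\wp^{(p-1)}$ (equivalently of an Eisenstein--Kronecker class sum $P_{0,p}$, by Proposition~\ref{prop:classsumsL}). Evaluate it in exact integer arithmetic modulo $p^{g+1} = p^3$, using the recursion for the Laurent coefficients of $\wp$ on $y^2 = x^3 - Dx$, which have integral denominators away from $2$ and $3$.

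Where $\ord_p c_p^+ = 2 = g$, the second clause of Theorem~\ref{thm:clscriterion} applies. Here $p \nmid 6$, and $p$ is non-anomalous except at $p=5$ for $a_5=-4$ (Lemma~\ref{lem:noanomalous}). The conclusion is $\Sha(E_i/K)[p^\infty]=0$, hence $\Sha(E_i/\Q)[p^\infty]=0$ by injectivity of restriction for odd $p$. Where $\ord_p c_p^+ = 3 < g+2$, the first clause gives only finiteness, and these cases should be exactly the four listed pairs.

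The anomalous prime $p=5$ for $D = 17, -33$ needs separate handling, since the second clause fails there. I would treat it by a direct $5$-descent, or read it off from the data of \cite{CLS2}, where $\ord_5 c_5^+$ is recorded. The main obstacle is the computation itself: confirming that $\ord_p c_p^+ \le 3$ for all roughly $8{,}000$ primes, and that it equals $2$ outside the four pairs. The quantity has weight $p$, which grows with $p$, so one must work modulo $p^3$ throughout rather than with real approximations. The first approach I would try is the trace formula of \cite[\S 4]{CLS2}.
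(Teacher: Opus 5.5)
Your route is the one the paper gives: Theorem~\ref{thm:cls} is Theorem~\ref{thm:clscriterion} run prime by prime, with $\ord_p(c_p^+)=g+1$ at exactly the four excepted pairs. Your hypothesis checks are sound, including $\alpha(E)\in\{1,1-i\}$ read from Table~\ref{tab:curves} and the $2$-isogeny transfer for $E_1$. The computational step, however, is mis-specified.

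Working modulo $p^{g+1}=p^3$ only separates $\ord_p c_p^+=2$ from $\ord_p c_p^+\ge 3$. The finiteness clause needs $\ord_p c_p^+<g+2=4$, so at the four excepted pairs you must certify $c_p^+\not\equiv 0 \pmod{p^4}$. As written, your computation proves nothing at those pairs, and your closing aim of confirming $\ord_p c_p^+\le 3$ while working modulo $p^3$ is inconsistent.

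The Laurent coefficients of $\wp$ at the origin are also not $p$-integral in the relevant weight. By Hurwitz's analogue of von Staudt--Clausen, the coefficient of $z^{p-3}$ has $p$ in its denominator. Nor are they what enters. $c_p^+$ is built from values at nonzero $\ff$-division points, of $\wp^{(p-2)}$, since $\sum_\gamma (z+\gamma)^{-p}=-\wp^{(p-2)}(z)/(p-1)!$. These are computed from $\wp''=6\wp^2-2D$, a recursion with integral coefficients.

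The second issue is the anomalous prime. You are right that for $D=17$ and $D=-33$ one has $\#\widetilde E(\F_5)=10$, so the vanishing clause of Theorem~\ref{thm:clscriterion} does not apply at $p=5$. The paper's one-sentence account does not address this either. Its own Theorem~\ref{prop:consequence} cannot substitute, since $5\in S_{\mathrm{an}}\subseteq S_E$ for these curves.

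Your second fallback is not an argument: whatever the value of $\ord_5 c_5^+$, the criterion yields at most finiteness at an anomalous prime. Only an independent input closes these two pairs. One option is a $5$-descent, most cheaply a $(2+i)$-descent over $K$, showing $\Sha(E/\Q)[5]=0$; with $E(\Q)[5]=0$ this gives $\Sha(E/\Q)[5^\infty]=0$. The other is to locate whatever separate treatment \cite{CLS2} gives $p=5$. Until one of these is supplied, your plan establishes the vanishing only away from $(i,p)=(2,5)$ and $(3,5)$.
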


\noindent Coates, Liang and Sujatha prove this by applying
Theorem~\ref{thm:clscriterion} at each prime; at the four excepted pairs
$\ord_p(c_p^+) = g + 1$ and the criterion is inconclusive.  The paragraph
following their theorem reports $p$-adic height computations of Wuthrich
at $17$, $29$ and $277$ giving $\Sha(E_i/\Q)[p^\infty] = 0$ there, and
records that $(E_4, 577)$ remained open.  The cyclotomic criterion settles
all four pairs.

\begin{proposition}\label{prop:e4}
Let $E_4 \colon y^2 = x^3 + 34x$.  Then $\Sha(E_4/\Q)[577^\infty] = 0$.
\end{proposition}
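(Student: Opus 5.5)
The plan is to apply Theorem~\ref{prop:consequence} to $E_4$ at $p = 577$. That requires checking its hypotheses and then verifying the unit condition $\tilde c_2(577) \in \Z_{577}^\times$ by direct computation.

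\emph{Hypotheses.} By Table~\ref{tab:curves}, $E_4$ has CM by $\Z[i]$, $\rank E_4(\Q) = 2$, $L(E_4,1) = 0$ and $w(E_4) = +1$. The prime $577 \equiv 1 \bmod 4$ is split in $K$, and $577 \nmid 2\cdot 17$, so the reduction is good. By Table~\ref{tab:excluded}, $S_E \subseteq \{2,3,17\}$, so $577 \notin S_E$.

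\emph{The unit condition.} I would compute the Mazur--Tate--Teitelbaum $p$-adic $L$-function $L_{577}(E_4,T)$ from modular symbols, in SageMath/eclib, to enough $577$-adic precision to determine the residue of $c_2(577)$ modulo $577$. Lemma~\ref{lem:c0c1} guarantees $c_0 = c_1 = 0$, which serves as a consistency check on the output. I would then form $\tilde c_2(577)$ as in Definition~\ref{def:c2tilde}, with $\#E(\Q)_{\mathrm{tors}} = 2$ and $\prod_v c_v = 4$. Since $577$ is non-anomalous by Lemma~\ref{lem:noanomalous}, Proposition~\ref{prop:normalisation} reduces the claim to showing that $c_2(577)$ is nonzero modulo $577$.

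For this step one needs the Riemann sums at level $577^n$ with $n$ large enough that the error bounds of \cite[\S 3]{SteinWuthrich} control the coefficient of $T^2$ to valuation $0$. The main obstacle is cost: at $p = 577$ even level $n=2$ involves about $577^2$ modular-symbol evaluations. To see why $n=2$ should suffice, recall that the approximation at level $n$ determines $c_j$ modulo $p^{n-1}$ only up to small denominators, and a unit $c_2$ needs just one correct digit. I would therefore aim to get away with $n = 2$, vectorising the evaluation of the plus modular symbol $[a/p^{n}]^+$ over all $a$.

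\emph{Cross-check.} As an independent check I would compute the cyclotomic regulator $\Reg_\gamma$ at $577$ following Stein--Wuthrich, and confirm that $v_{577}(\Reg_\gamma) = 0$. By Proposition~\ref{prop:padicbsdunits} this is consistent with the answer, though the proof of the proposition itself would rest only on the modular-symbol computation. Once the unit condition holds, Theorem~\ref{prop:consequence}(3) gives $\Sha(E_4/\Q)[577^\infty] = 0$.
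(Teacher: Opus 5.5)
Your proposal is correct and is the paper's proof. The paper also checks the hypotheses of Theorem~\ref{prop:consequence} for $E_4$ at $577$ (split, good reduction, outside $S_E \subseteq \{2,3,17\}$), certifies $v_{577}(c_2) = 0$ from an eclib modular-symbol computation of $L_{577}(E_4,T)$ with the error bounds of \cite[\S 3]{SteinWuthrich}, passes to $\tilde c_2(577)$ by Proposition~\ref{prop:normalisation}, and concludes. The one thing your plan leaves open is the outcome of that computation, which the paper supplies at exactly the level-two precision you propose: $L_{577}(E_4,T) = O(577^4) + O(577)\,T + \bigl(529 + O(577)\bigr)T^2 + O(T^3)$, so $c_2 \equiv 529 \not\equiv 0 \pmod{577}$.
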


\begin{proof}
$E_4$ satisfies the hypotheses of Theorem~\ref{prop:consequence}
(\S\ref{ssec:testbed}), and $577 \equiv 1 \bmod 4$ is a split prime of
good reduction outside $S_E \subseteq \{2,3,17\}$
(\S\ref{ssec:excludedsets}).  The modular-symbol $577$-adic
$L$-function of $E_4$, computed from eclib modular symbols with the
error bounds of \cite[\S 3]{SteinWuthrich}, is
\[
   L_{577}(E_4, T) \;=\; O(577^4) + O(577)\,T + \bigl(529 + O(577)\bigr)\,T^2 + O(T^3),
\]
so $v_{577}(c_2) = 0$ and, by Proposition~\ref{prop:normalisation},
$\tilde c_2(577) \in \Z_{577}^\times$.  Theorem~\ref{prop:consequence}
gives the claim.
\end{proof}

\noindent Proposition~\ref{prop:dictionary} adds that the cyclotomic
$577$-adic regulator of $E_4$ is a unit, as the scan of
\S\ref{ssec:scan} finds.  The transport from $E_1$ to
$y^2 = x^3 - 56x$ is the following standard fact.

\begin{lemma}[Degree-two isogenies preserve odd-primary
$\Sha$]\label{lem:isogeny}
Let $\varphi \colon A \to B$ be an isogeny of degree $2$ between
elliptic curves over $\Q$, and let $p$ be an odd prime.  Then the
induced map
\[
   \varphi_* \colon \Sha(A/\Q)[p^\infty] \longrightarrow
   \Sha(B/\Q)[p^\infty]
\]
is an isomorphism.
\end{lemma}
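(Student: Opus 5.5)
The plan is to use the dual isogeny and the fact that the composites are multiplication by $2$, which is invertible on $p$-primary groups. Let $\hat\varphi \colon B \to A$ be the dual isogeny, so that $\hat\varphi\circ\varphi = [2]_A$ and $\varphi\circ\hat\varphi = [2]_B$. Any isogeny defined over $\Q$ induces a homomorphism on Galois cohomology $H^1(\Q, -)$ and on each $H^1(\Q_v,-)$, compatible with restriction. It therefore carries locally trivial classes to locally trivial classes, and so induces maps $\varphi_* \colon \Sha(A/\Q) \to \Sha(B/\Q)$ and $\hat\varphi_* \colon \Sha(B/\Q)\to\Sha(A/\Q)$. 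These maps are homomorphisms of abelian groups, so they preserve $p$-primary parts; hence they restrict to maps between $\Sha(A/\Q)[p^\infty]$ and $\Sha(B/\Q)[p^\infty]$.

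The key step is functoriality: the map induced on $H^1$ by a composite is the composite of the induced maps, and $[2]$ induces multiplication by $2$ on the group $H^1(\Q, A)$. Hence
\[
\hat\varphi_*\circ\varphi_* = 2 \quad\text{on } \Sha(A/\Q)[p^\infty], \qquad \varphi_*\circ\hat\varphi_* = 2 \quad\text{on } \Sha(B/\Q)[p^\infty].
\]
Since $p$ is odd, multiplication by $2$ is an automorphism of any $p$-primary torsion abelian group: on an element killed by $p^n$ its inverse is multiplication by an inverse of $2$ modulo $p^n$. Both composites are therefore bijective.

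From the first identity $\varphi_*$ is injective, and from the second it is surjective, so it is an isomorphism. Its inverse is $2^{-1}\hat\varphi_*$.

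No finiteness of $\Sha$ is needed. The only point requiring care is that $[2]$ acts on $H^1(\Q, A)$ as multiplication by $2$. This holds because $H^1$ is an additive functor and $[2] = \mathrm{id}+\mathrm{id}$ in $\mathrm{End}(A)$. I expect no real obstacle here.
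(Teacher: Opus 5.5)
Your proposal is correct and follows essentially the same route as the paper: the dual isogeny gives $\hat\varphi_*\varphi_* = 2$ and $\varphi_*\hat\varphi_* = 2$, and multiplication by $2$ is an automorphism of a $p$-primary group for odd $p$, so $\varphi_*$ is injective and surjective. Your extra remarks on why the maps preserve local triviality and why $[2]$ induces multiplication by $2$ on $H^1$ fill in details the paper leaves implicit.
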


\begin{proof}
Let $\hat\varphi \colon B \to A$ be the dual isogeny, so that
$\hat\varphi\varphi = [2]$ on $A$ and $\varphi\hat\varphi = [2]$ on
$B$, whence $\hat\varphi_*\varphi_* = [2]_*$ and
$\varphi_*\hat\varphi_* = [2]_*$ on the respective groups.  For odd $p$
multiplication by $2$ is an automorphism of a $p$-primary torsion
group, so the first relation makes $\varphi_*$ injective and the second
makes it surjective.
\end{proof}

The other three excepted pairs are settled in the same way as
$(E_4, 577)$, for $E_5$ directly and for $E_1$ through its isogenous
curve.

\begin{proposition}\label{prop:threepairs}
Let $E \colon y^2 = x^3 - 56x$ and $E_5 \colon y^2 = x^3 + 39x$.  Then
\[
   \Sha(E/\Q)[29^\infty] \;=\; \Sha(E/\Q)[277^\infty] \;=\; 0
   \qquad\text{and}\qquad
   \Sha(E_5/\Q)[17^\infty] \;=\; 0 .
\]
\end{proposition}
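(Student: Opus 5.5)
The plan is to argue exactly as in Proposition~\ref{prop:e4}, reducing each claim to the unit condition at the relevant prime and then applying Theorem~\ref{prop:consequence}.

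The first step is to confirm the standing hypotheses. Both $E\colon y^2=x^3-56x$ and $E_5\colon y^2=x^3+39x$ satisfy $\rank E(\Q)=2$, $L(E,1)=0$ and $w(E)=+1$ by \S\ref{ssec:testbed}. The primes $29,277\equiv 1\bmod 4$ are split and of good reduction for $E$, and they lie outside $S_E\subseteq\{2,3,7\}$ by Table~\ref{tab:excluded}.

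The prime $17$ needs more care for $E_5$. It is $\equiv 1\bmod 4$, and $S_E\subseteq\{2,3,13\}$ for $D=-39$. The conductor of $E_5$ is $2^5\cdot3^2\cdot13^2$, so $17$ is a good prime for $E_5$ and not excluded.

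The second step is to compute, for each of the three pairs $(E,29)$, $(E,277)$ and $(E_5,17)$, the modular-symbol $p$-adic $L$-function to enough precision. This uses eclib modular symbols and the error bounds of \cite[\S 3]{SteinWuthrich}, exactly as was done at $577$. The goal is to read off that the $T^2$ coefficient has $v_p(c_2)=0$; the computation should also show $c_0$ and $c_1$ vanishing to the working precision, consistent with Lemma~\ref{lem:c0c1}.

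Proposition~\ref{prop:normalisation} then gives $\tilde c_2(p)\in\Z_p^\times$, since $p$ is non-anomalous and prime to $\prod_v c_v\cdot\#E(\Q)_{\mathrm{tors}}$. Theorem~\ref{prop:consequence} then yields $\Sha[p^\infty]=0$ in each case.

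There is an alternative, equivalent route via Proposition~\ref{prop:dictionary}. It would need $\Sha$ as input, so it is circular here and we do not take it. The regulator scan of \S\ref{ssec:scan} only serves as a consistency check: by (1)$\Rightarrow$(2), the computed $\Reg_\gamma$ must be a unit at these three pairs.

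The main obstacle is the small primes $17$ and $29$, where precision is cheap but the computation must be certified. We need provable bounds showing that the displayed residue of $c_2$ modulo $p$ is exact, not merely numerical. I would follow the Stein--Wuthrich truncation bounds for the Riemann sums at level $p^n$, taking $n$ large enough that the $T^2$ coefficient is known modulo $p$.

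Finally, to pass from $E$ back to $E_1$ (if desired), Lemma~\ref{lem:isogeny} transfers the vanishing for $p=29,277$, since both primes are odd.
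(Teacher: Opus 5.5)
Your proposal is correct and follows the paper's proof. You check the standing hypotheses and $p \notin S_E$, certify $v_p(c_2(p)) = 0$ from eclib modular symbols with the error bounds of \cite[\S 3]{SteinWuthrich}, then apply Proposition~\ref{prop:normalisation} and Theorem~\ref{prop:consequence}. The only difference is that the paper does not run three separate computations: it cites the certified modular-symbol scan of \S\ref{ssec:unitms} (Table~\ref{tab:unitms}), which gives $v_p(c_2(p)) = 0$ at every split prime of good reduction below $1000$ for $E$, and at every one but $5$ for $E_5$.
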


\begin{proof}
Both curves satisfy the hypotheses of Theorem~\ref{prop:consequence}
(\S\ref{ssec:testbed}); $S_E$ contains no split prime for $E$ and is
contained in $\{2,3,13\}$ for $E_5$ (\S\ref{ssec:excludedsets}), so $29$
and $277$ lie outside $S_E$ for $E$, and $17$ outside $S_E$ for $E_5$.
The computation of \S\ref{ssec:unitms}, from eclib modular symbols with
the error bounds of \cite[\S 3]{SteinWuthrich}, certifies
$v_p(c_2(p)) = 0$ at every split prime of good reduction below $1000$
for $E$ and at every one but $5$ for $E_5$ (Table~\ref{tab:unitms}), in
particular at the three pairs concerned; by
Proposition~\ref{prop:normalisation} the unit condition holds there, and
Theorem~\ref{prop:consequence} gives the claim.
\end{proof}

\begin{corollary}\label{cor:shavanishing}
Let $E \colon y^2 = x^3 - Dx$ with $D \in \{56, 17, -33, -34, -39\}$.
Then
\[
   \Sha(E/\Q)[p^\infty] \;=\; 0
\]
for every prime $p \equiv 1 \pmod 4$ of good reduction for $E$ with
$p < 30{,}000$.
\end{corollary}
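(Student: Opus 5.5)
The plan is to treat the primes according to which curve and which pair $(i,p)$ is involved, combining Theorem~\ref{thm:cls} with Lemma~\ref{lem:isogeny} and the two propositions settling the excepted pairs.

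First, fix $D\in\{17,-33,-34,-39\}$, so that $E=E_i$ for $i=2,3,4,5$. Let $p\equiv 1 \pmod 4$ be a prime of good reduction with $p<30{,}000$. If $(i,p)$ is not one of the excepted pairs $(4,577)$ or $(5,17)$, then Theorem~\ref{thm:cls} gives $\Sha(E_i/\Q)[p^\infty]=0$ directly. For $(4,577)$ we invoke Proposition~\ref{prop:e4}. The pair $(5,17)$ needs a check: $17$ is not a prime of good reduction for $E_5$, whose conductor is $2^5\cdot 3^2\cdot 13^2$, so it is good and lies in the range. There we invoke Proposition~\ref{prop:threepairs}. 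This disposes of the four curves $E_2,\dots,E_5$.

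Second, take $D=56$, that is $E\colon y^2=x^3-56x$. This curve is $2$-isogenous over $\Q$ to $E_1\colon y^2=x^3+14x$ (\S\ref{ssec:testbed}), and the two curves share the conductor $2^8\cdot 7^2$. Hence they have the same primes of good reduction, and every $p\equiv 1\pmod 4$ is odd. For $p\notin\{29,277\}$, Theorem~\ref{thm:cls} gives $\Sha(E_1/\Q)[p^\infty]=0$. Applying Lemma~\ref{lem:isogeny} to the degree-$2$ isogeny $E_1\to E$ then gives $\Sha(E/\Q)[p^\infty]\cong\Sha(E_1/\Q)[p^\infty]=0$. For $p=29$ and $p=277$, Proposition~\ref{prop:threepairs} states the vanishing for $E$ directly.

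There is no real obstacle here. The only care needed is bookkeeping: checking that the excepted pairs are exactly the four listed, and that the isogeny transfer uses an odd prime together with a genuine degree-$2$ isogeny over $\Q$. Both conditions were established in \S\ref{ssec:testbed}.
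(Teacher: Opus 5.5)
Your proof is correct and is essentially the paper's own: Theorem~\ref{thm:cls} at the non-excepted pairs, Propositions~\ref{prop:e4} and~\ref{prop:threepairs} at the four excepted pairs, and Lemma~\ref{lem:isogeny} (for odd $p$) to transfer from $E_1$ to $y^2 = x^3 - 56x$. One slip of wording: you say $17$ ``is not a prime of good reduction'' for $E_5$, but you mean it is not a prime of \emph{bad} reduction, since $17 \nmid 2^5\cdot 3^2\cdot 13^2$, and that is what your conclusion ``so it is good'' uses.
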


\begin{proof}
For $D = 17$ and $-33$ this is Theorem~\ref{thm:cls}; for $D = -34$ it
is Theorem~\ref{thm:cls} with Proposition~\ref{prop:e4} at $577$, and
for $D = -39$ Theorem~\ref{thm:cls} with Proposition~\ref{prop:threepairs}
at $17$.  For $D = 56$ the isogeny class of $E$ is $\{E, E_1\}$, the two
curves joined by $2$-isogenies and with the same bad primes; every
$p \equiv 1 \bmod 4$ is odd, so Lemma~\ref{lem:isogeny} identifies
$\Sha(E_1/\Q)[p^\infty]$ with $\Sha(E/\Q)[p^\infty]$, and
Theorem~\ref{thm:cls} gives the vanishing except at $29$ and $277$,
where it is Proposition~\ref{prop:threepairs}.
\end{proof}

For $K = \Q(i)$ the split primes are the primes $p \equiv 1 \bmod 4$,
so below $30{,}000$ the unit condition reduces to a condition on the
regulator alone.

\begin{proposition}[Below $30{,}000$ the unit condition is a regulator
condition]\label{prop:scaneq}
Let $E \colon y^2 = x^3 - Dx$ with $D \in \{56, 17, -33, -34, -39\}$,
and let $p \equiv 1 \pmod 4$ be a prime of good reduction for $E$ with
$p < 30{,}000$ and $p \notin S_E$.  Then
\[
   \tilde c_2(p) \in \Z_p^\times
   \iff
   v_p\bigl(\Reg_\gamma\bigr) = 0
   \iff
   v_\fp\bigl(\Reg_\fp\bigr) = 2 .
\]
\end{proposition}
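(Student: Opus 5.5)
The plan is to chain Proposition~\ref{prop:dictionary} with Corollary~\ref{cor:shavanishing}, and then to convert between the two normalisations of the regulator.

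\emph{First equivalence.} The curve $E$ satisfies the hypotheses of Proposition~\ref{prop:dictionary} by \S\ref{ssec:testbed}: rank two, $L(E,1)=0$, $w(E)=+1$, CM by $\Z[i]$. Moreover $p\equiv 1 \bmod 4$ is split and $p\notin S_E$. So $\tilde c_2(p)\in\Z_p^\times$ holds if and only if the height pairing is nondegenerate, $v_p(\Reg_\gamma)=0$, and $\Sha(E/\Q)[p^\infty]=0$. Corollary~\ref{cor:shavanishing} supplies the last clause unconditionally, because $p<30{,}000$ and $p$ is $\equiv 1\bmod 4$ of good reduction. That leaves nondegeneracy together with $v_p(\Reg_\gamma)=0$.

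The condition $v_p(\Reg_\gamma)=0$ already forces $\Reg_\gamma\neq 0$. Since $\Reg_\gamma$ is the discriminant of the pairing up to the nonzero factor $\log_p(1+p)^{-2}$, it also forces nondegeneracy. One subtlety needs a sentence: the valuation of $0$ is $+\infty$, so ``$v_p(\Reg_\gamma)=0$'' should be read as already asserting nondegeneracy. With that reading the first equivalence follows.

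\emph{Second equivalence.} By definition $\Reg_\gamma=\Reg_\fp/\log_p(1+p)^2$ (\S\ref{ssec:notation}, with $r=2$). Because $v_p(\log_p(1+p))=1$, we get $v_p(\Reg_\gamma)=v_\fp(\Reg_\fp)-2$. Here $\Reg_\fp\in\Q_p$, so the convention of \S\ref{ssec:notation} gives $v_\fp=v_p$ on it. The two conditions are therefore equivalent.

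There is no real obstacle. The only points to check are the applicability bookkeeping (that $S_E$ and the hypotheses of Proposition~\ref{prop:dictionary} match) and the convention that a degenerate pairing has infinite valuation. Both are immediate from the cited statements.
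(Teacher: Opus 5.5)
Your proposal is correct and follows the paper's own proof essentially step for step. You apply Proposition~\ref{prop:dictionary}, take the $\Sha$ clause from Corollary~\ref{cor:shavanishing}, and read off nondegeneracy from $v_p(\Reg_\gamma) = 0$, since $\Reg_\gamma$ is a nonzero multiple of the determinant of the pairing; the second equivalence then comes from $v_p(\log_p(1+p)) = 1$.
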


\begin{proof}
The curve has $\rank E(\Q) = 2$, $L(E,1) = 0$ and $w(E) = +1$
(\S\ref{ssec:testbed}), and $p$ is a split prime of good reduction with
$p \notin S_E$, so Proposition~\ref{prop:dictionary} applies:
$\tilde c_2(p) \in \Z_p^\times$ if and only if the cyclotomic $p$-adic
height pairing on $E(\Q)$ is nondegenerate with $v_p(\Reg_\gamma) = 0$
and $\Sha(E/\Q)[p^\infty] = 0$.  The condition on $\Sha$ holds by
Corollary~\ref{cor:shavanishing}, and the nondegeneracy follows from the
valuation: $\Reg_\gamma$ is a nonzero multiple of the determinant of the
pairing on $P_1, P_2$, so $v_p(\Reg_\gamma) = 0$ forces that
determinant to be nonzero.  For the second equivalence,
$\Reg_\gamma = \Reg_\fp/\log_p(1+p)^{2}$ (\S\ref{ssec:notation}) and
$v_p(\log_p(1+p)) = 1$.
\end{proof}

\subsection{The regulator scan}\label{ssec:scan}
The quantity of Proposition~\ref{prop:scaneq} is the valuation of the
determinant of the cyclotomic $\fp$-adic height pairing on
$\{P_1, P_2\}$, obtained through the $p$-adic sigma function of
Mazur--Stein--Tate \cite[Alg.~3.1]{MST} as implemented in PARI/GP's
\texttt{ellpadicregulator} \cite{PARI}, which computes the height
$\hat h_p$ of \S\ref{ssec:notation}: in the notation of its manual,
$\hat h_p = f - s_2\,g$ with $f = \log_p(\mathrm{denominator}(x(P))) - 2\log_p\sigma(P)$.
SageMath's \texttt{padic\_regulator}, used in \S\ref{ssec:control},
computes the same height.  It admits two readings.  If
$v_\fp(\Reg_\fp) > 2$ at a split prime $p \notin S_E$ then
$\tilde c_2(p) \notin \Z_p^\times$, by direction $(1) \Rightarrow (2)$
of Proposition~\ref{prop:dictionary} read contrapositively, with no
input on $\Sha$: the computation \emph{refutes}.  If
$v_\fp(\Reg_\fp) = 2$ at such a prime below $30{,}000$ then
$\tilde c_2(p) \in \Z_p^\times$ by Proposition~\ref{prop:scaneq}: the
computation \emph{verifies}, and a clean prime is a verified instance
of the unit condition, not a prime at which no counterexample was found.
The standard of verification is that of a measurement: one
implementation of the $p$-adic height, at the precision stated next,
reimplemented independently only at the primes of \S\ref{ssec:control}
and \S\ref{ssec:family}.

The regulator was computed at a working precision of $n$ $\fp$-adic
digits, with
\[
   n = 6 \ \ (p < 2000), \qquad
   n = 5 \ \ (2000 \le p < 10^4), \qquad
   n = 4 \ \ (p \ge 10^4),
\]
at least two digits above the generic valuation $2$.  If the computed
valuation $v$ satisfied $v \ge n-1$, where finite precision could mask
a larger true valuation, the prime was recomputed at precision $n+4$
and its output line flagged.  No line of any of the five scans carries
a flag, so a valuation reported as $2$ is the true valuation.

The scanned set is, for each curve, every split prime of good reduction
below $30{,}000$: the $1611$ primes $p \equiv 1 \bmod 4$ with
$5 \le p \le 29{,}989$, less $17$ for $D = 17$ and $-34$ and $13$ for
$D = -39$; $8{,}052$ primes in all.  Table~\ref{tab:scan} gives the
outcome.  At every one of the $8{,}050$ primes outside $S_E$ the
valuation is $v_\fp(\Reg_\fp) = 2$ except at three, where it is $3$:
$p = 37$ for $y^2 = x^3 + 33x$, and $p = 5$ and $p = 15289$ for
$y^2 = x^3 + 39x$.  At the two anomalous primes it is $0$, so
$\Reg_\gamma \notin \Z_p$ there.  The three exceptions are examined in
\S\ref{ssec:family}.  The output files were re-parsed for primality,
for $p \equiv 1 \bmod 4$, for completeness of the range and for absence
of flags.  The cost per prime depends on $p$ and not on the curve:
$5$\,ms at $p = 101$, $0.55$\,s at $p = 5003$ and $4.9$\,s at
$p = 29{,}989$.

\begin{table}[ht]
\centering
\caption{The regulator scan of the five curves at every split prime of
good reduction below $30{,}000$.}\label{tab:scan}
\small
\begin{tabular}{@{}l r l l@{}}
\hline
curve & primes scanned & $v_\fp(\Reg_\fp) \ne 2$ outside $S_E$ & excluded, $p \in S_E$ \\
\hline
$y^2 = x^3 - 56x$ & $1611$ & none & \\
$y^2 = x^3 - 17x$ & $1610$ & none & $p = 5$ (anomalous), $v = 0$ \\
$y^2 = x^3 + 33x$ & $1611$ & $p = 37$, $v = 3$ & $p = 5$ (anomalous), $v = 0$ \\
$y^2 = x^3 + 34x$ & $1610$ & none & \\
$y^2 = x^3 + 39x$ & $1610$ & $p = 5$ and $p = 15289$, $v = 3$ & \\
\hline
\end{tabular}
\end{table}

The LMFDB records, for each of the five isogeny classes, the valuation
of the $p$-adic regulator at the split primes below $100$ and the
Iwasawa invariants of $L_p(E,T)$ at the primes below $50$
\cite{LMFDB}, without stated error bounds.  The $52$ regulator
valuations agree with the scan files line for line.  The invariants are
$\mu_{\mathrm{an}} = 0$ throughout and $\lambda_{\mathrm{an}} = 2$ at
every split prime below $50$ except $\lambda_{\mathrm{an}} = 4$ at
$(D,p) = (-39, 5)$ and $(-33, 37)$; since $c_0 = c_1 = 0$
(Lemma~\ref{lem:c0c1}), $\lambda_{\mathrm{an}} = 2$ is the unit
condition, and $\lambda_{\mathrm{an}} = 4$ says that $c_2$ and $c_3$
are divisible by $p$ and $c_4$ is a unit, as \S\ref{ssec:family} finds
at $(-39, 5)$.

\subsection{The control on the normalisation}\label{ssec:control}
At two or three primes of each curve the two sides of
\eqref{eq:padicbsd} were computed by disjoint methods and compared
digit for digit: the left-hand side from modular symbols at the level
$N$ of the curve, with the proved coefficient error bounds of
\cite[\S 3]{SteinWuthrich}, and the right-hand side from the cyclotomic
$p$-adic heights of $P_1, P_2$ through the sigma function \cite{MST},
in two implementations (PARI/GP and SageMath) agreeing digit for digit,
with $\#\Sha(E/\Q)[p^\infty] = 1$ as Corollary~\ref{cor:shavanishing}
gives.  Table~\ref{tab:control} lists the runs: the modular-symbol side
sums $(p-1)p^{n-1}$ modular symbols and certifies $c_2(p)$ modulo
$p^{n-1}$, the height side is computed to $14$ digits, and the two
agree on every certified digit above the common valuation.  Two of the
runs are at the exceptional pairs $(-39, 5)$ and $(-33, 37)$: there
both sides have valuation $1$, the height side because
$v_\fp(\Reg_\fp) = 3$ and the $L$-side because $v_p(c_2(p)) = 1$, and
the in-house $\lambda_{\mathrm{an}} = 4$ agrees with the LMFDB.  The
comparison tests the interpolation factor, the $\log_p(1+p)^2$
scaling, the saturation of the basis and the Tamagawa and torsion
bookkeeping at once; it tests nothing else, \eqref{eq:padicbsd} itself
being conjectural.  The checks are mechanical, on the $p$-adic
expansions the three stages print, and a run is recorded only if every
check passes.

\begin{table}[ht]
\centering
\caption{The control on \eqref{eq:padicbsd}: $n$ terms of the
modular-symbol series, $c_2$ certified modulo $p^{k}$, and the number of
$p$-adic digits on which the two sides agree.}\label{tab:control}
\begin{tabular}{@{}l r r r r r@{}}
\hline
curve & $p$ & $n$ & $k$ & $v_p(c_2)$ & digits agreeing \\
\hline
$y^2 = x^3 - 56x$ & $5$  & $8$ & $7$ & $0$ & $7$ \\
                  & $13$ & $6$ & $5$ & $0$ & $5$ \\
$y^2 = x^3 - 17x$ & $13$ & $5$ & $4$ & $0$ & $4$ \\
                  & $29$ & $4$ & $3$ & $0$ & $3$ \\
$y^2 = x^3 + 33x$ & $13$ & $4$ & $3$ & $0$ & $3$ \\
                  & $17$ & $4$ & $3$ & $0$ & $3$ \\
                  & $37$ & $3$ & $2$ & $1$ & $1$ \\
$y^2 = x^3 + 34x$ & $5$  & $8$ & $7$ & $0$ & $7$ \\
                  & $13$ & $4$ & $3$ & $0$ & $3$ \\
$y^2 = x^3 + 39x$ & $5$  & $9$ & $8$ & $1$ & $7$ \\
                  & $17$ & $4$ & $3$ & $0$ & $3$ \\
\hline
\end{tabular}
\end{table}

The error bounds are those of \cite[\S 3]{SteinWuthrich}, a section
whose stated purpose is a recipe for computing $L_p(E,T)$ ``in all
cases, including proven error bounds on each coefficient''.  It opens
by assuming that $E$ has no complex multiplication, which our curves
have.  That hypothesis is a running one for their article, used where
they invoke Serre's surjectivity theorem (\cite[\S 7]{SteinWuthrich},
which flags the dependence) and Kato's Euler system
(\cite[Thm.~8.1]{SteinWuthrich}); it does not enter \S 3, whose bound
reduces to the modular symbol $[x]^+$ having bounded denominator, and
that follows from the finiteness of the cuspidal subgroup of $J_0(N)$
(Manin \cite[Thm.~2.7]{Manin}, Cremona \cite[Ch.~2]{Cremona}) with the
Abel--Jacobi theorem, for any modular curve.  Coates--Liang--Sujatha
supply no alternative here: their criterion never constructs
$L_p(E,T)$, so it bears on the arithmetic side of \eqref{eq:padicbsd}
and not on the analytic one.

\subsection{The chance model}\label{ssec:scanweight}
Model the events $v_\fp(\Reg_\fp) > 2$, one per scanned split prime
outside $S_E$, as independent, each with the probability $1/p$ that a
random element of $\Z_p$ is divisible by $p$; the element in question
is $\Reg_\gamma$.  The number of exceptions for one curve is then
approximately Poisson with mean
\[
   \lambda_E \;=\; \sum_{p} \frac{1}{p} ,
\]
which is $0.880$, $0.621$, $0.680$, $0.821$, $0.803$ for the five
curves in the order of Table~\ref{tab:curves}, and $3.81$ in all; the
two anomalous fives, excluded, would each have contributed $0.2$.  A
clean scan of $y^2 = x^3 - 56x$ has probability $e^{-0.880} = 0.41$
under this model, and the scan of one curve neither confirms nor
refutes it.  The five scans found three exceptions against $3.8$
expected.  Question~\ref{q:infinite} takes this count as its
calibration.

\subsection{The three exceptions}\label{ssec:family}
Each of the three exceptions of Table~\ref{tab:scan} was recomputed at
precisions $6$ to $14$, and $6$ to $16$ at $p = 15289$, with the same
valuation, and by SageMath's implementation of the $p$-adic height with
its own saturated basis, which agrees.  None of $5$, $37$, $15289$ is
anomalous for its curve, divides $2D$, or divides $\#\mathrm{Cl}_\ff(K)$
(Table~\ref{tab:excluded}), so each is outside $S_E$, and each is a
failure of the unit condition by the refuting reading of
\S\ref{ssec:scan}: it is the regulator that fails, and not $\Sha$.  At
$p = 5$ for $y^2 = x^3 + 39x$ the modular-symbol $p$-adic $L$-function
shows the failure directly,
\[
\begin{aligned}
   L_5(E,T) \;=\; O(5^6) + O(5^3)\,T &+ \bigl(4\cdot 5 + 2\cdot 5^2 + O(5^3)\bigr)T^2 \\
   &+ \bigl(3\cdot 5 + 4\cdot 5^2 + O(5^3)\bigr)T^3 \\
   &+ \bigl(4 + 2\cdot 5^2 + O(5^3)\bigr)T^4 + O(T^5),
\end{aligned}
\]
so $c_0 = c_1 = 0$ and $v_5(c_2) = 1$, in agreement with
$v_5(\Reg_\gamma) = 1$ and $\Sha[5^\infty] = 0$ through
\eqref{eq:padicbsd}; the control of \S\ref{ssec:control} repeats the
comparison at $(-33, 37)$.

The criterion of Coates, Liang and Sujatha is inconclusive at four
pairs below $30{,}000$: $p = 29$ and $277$ for $y^2 = x^3 + 14x$, the
isogeny partner of $y^2 = x^3 - 56x$, $p = 17$ for $y^2 = x^3 + 39x$,
and $p = 577$ for $y^2 = x^3 + 34x$ \cite[Thm.~1.3]{CLS2}.  At all
four the cyclotomic regulator is a unit, and at the three exceptions
above their criterion succeeds: the two criteria fail at different
primes.  At all four the vanishing of $\Sha(E/\Q)[p^\infty]$ is
Theorem~\ref{prop:consequence} (Propositions~\ref{prop:threepairs}
and~\ref{prop:e4}); at $17$, $29$ and $277$ this agrees with the height
computations of Wuthrich reported in \cite{CLS2}.  Exceptions of the same kind for non-CM
curves, from Stein and Wuthrich, are recorded with
Question~\ref{q:infinite}.

\subsection{The unit condition by modular symbols below $1000$}\label{ssec:unitms}
Below $1000$ the unit condition was also read off $L_p(E,T)$ directly,
as Stein and Wuthrich do for their $1{,}534{,}422$ non-CM pairs
\cite[\S 12.4]{SteinWuthrich}: for each of the five curves and each
split prime $p < 1000$ of good reduction, $L_p(E,T)$ was computed
modulo $(p, T^3)$ from eclib modular symbols with the error bounds of
\cite[\S 3]{SteinWuthrich}, which certifies $c_2(p)$ modulo $p$, and
modulo $p^2$ at the two primes where $c_2(p) \equiv 0 \bmod p$;
$c_0$ and $c_1$ vanish to the certified precision.
Table~\ref{tab:unitms} gives the outcome at the $397$ pairs.  At every
pair outside $S_E$, $v_p(c_2) = 0$ holds if and only if the scan found
$v_\fp(\Reg_\fp) = 2$, and the two failures $(-39, 5)$ and $(-33, 37)$
show $c_2 \equiv 0 \bmod p$ directly.  At the two anomalous pairs $c_2$
is a unit while $v_\fp(\Reg_\fp) = 0$: in \eqref{eq:padicbsd} the
factor $(1-\alpha_5^{-1})^2$ has valuation $2$, as $5$ divides
$\#\widetilde E(\F_5) = 10$, and cancels the valuation $-2$ of
$\Reg_\gamma$.

\begin{table}[ht]
\centering
\caption{The unit condition read off $L_p(E,T) \bmod (p, T^3)$ at every
split prime of good reduction below $1000$.}\label{tab:unitms}
\begin{tabular}{@{}l r r l@{}}
\hline
curve & primes & $v_p(c_2) = 0$ & $v_p(c_2) > 0$ \\
\hline
$y^2 = x^3 - 56x$ & $80$ & $80$ & none \\
$y^2 = x^3 - 17x$ & $79$ & $79$ & none \\
$y^2 = x^3 + 33x$ & $80$ & $79$ & $p = 37$ \\
$y^2 = x^3 + 34x$ & $79$ & $79$ & none \\
$y^2 = x^3 + 39x$ & $79$ & $78$ & $p = 5$ \\
\hline
\end{tabular}
\end{table}

The consequence is independent of \S\ref{ssec:clsinput}.  At each of
the $393$ pairs of Table~\ref{tab:unitms} with $p \notin S_E$ and
$v_p(c_2) = 0$, Proposition~\ref{prop:normalisation} gives
$\tilde c_2(p) \in \Z_p^\times$ and Theorem~\ref{prop:consequence}
gives $\Sha(E/\Q)[p^\infty] = 0$, with no input from
Theorem~\ref{thm:cls}: below $1000$ the cyclotomic criterion reproduces
the result of Coates, Liang and Sujatha, including the three pairs
$(E_1, 29)$, $(E_1, 277)$ and $(E_5, 17)$ at which their criterion is
inconclusive (Proposition~\ref{prop:threepairs}).  At the two pairs where the cyclotomic
criterion fails, $(-39, 5)$ and $(-33, 37)$, it is their criterion that
gives $\Sha(E/\Q)[p^\infty] = 0$.

\section{The criterion of Theorem~\ref{thm:criterion} on the five curves}\label{sec:bracket}\label{ssec:bracketcert}
Proposition~\ref{prop:exactreduction} makes the criterion class
computable one prime at a time: $\mathfrak{c}(\fp) \ne 0$ if and only if
$v_\fp(B(\fp)) = 2$, where $B(\fp)$ is the combination
\eqref{eq:bracket} of three Eisenstein--Kronecker class sums of weight
$2p-1$, the critical values $L_\ff(\bar\psi_E^{\,2p-1}, s)$ at
$s = 2p-1$, $p$ and $1$ (Corollary~\ref{cor:bracketL}), of the Euler
factors $\mathcal{E}(k,l)$ and of factorials.  For the five curves of
\S\ref{ssec:testbed} the proposition applies at every split prime of
good reduction outside $S_E$, which excludes only $p = 5$ for
$y^2 = x^3 - 17x$ and $y^2 = x^3 + 33x$ (\S\ref{ssec:excludedsets}).

We computed $B(\fp)$ at sixteen pairs $(E, p)$: at $p = 5$, $13$, $17$,
$29$, $37$ for $y^2 = x^3 - 56x$, and at the three smallest split primes
of good reduction outside $S_E$ for each of the other four curves,
except that for $y^2 = x^3 + 33x$ only $13$ and $17$ are reported.
Each class sum lies in $K = \Q(i)$ and is recognised there exactly from
a floating-point value at $600$ decimal digits, under the gates
described in \texttt{code/README.md}; the Euler factors are computed in
$\Z_p$.  The cost grows like $p^4$ per ray class, which is why a few
primes are reported rather than a range.  Table~\ref{tab:bracket} lists
the results.

\begin{table}[ht]
\centering
\caption{The bracket $B(\fp)$ of \eqref{eq:bracket} at sixteen pairs.
The fifth column is $(f_0\omega_E)^{-1}a_p^{-2}\,p^{-2}B(\fp) \bmod \mathfrak{m}_W$,
which Corollary~\ref{cor:residue} identifies with
$\kappa(p) = 2c_2(p) \bmod p$; the last column is $\kappa(p)$ computed
from the modular-symbol $L_p(E,T)$, independently.  The two agree at
every pair.}\label{tab:bracket}
\small
\begin{tabular}{@{}l r r r r r@{}}
\hline
curve & $p$ & $v_\fp(B(\fp))$ & $p^{-2}B(\fp) \bmod \mathfrak{m}_W$ & from $B(\fp)$ & $\kappa(p)$ \\
\hline
$y^2 = x^3 - 56x$ & $5$  & $2$ & $1$  & $2$  & $2$ \\
                  & $13$ & $2$ & $4$  & $2$  & $2$ \\
                  & $17$ & $2$ & $12$ & $15$ & $15$ \\
                  & $29$ & $2$ & $15$ & $12$ & $12$ \\
                  & $37$ & $2$ & $6$  & $17$ & $17$ \\
$y^2 = x^3 - 17x$ & $13$ & $2$ & $3$  & $1$  & $1$ \\
                  & $29$ & $2$ & $17$ & $13$ & $13$ \\
                  & $37$ & $2$ & $27$ & $19$ & $19$ \\
$y^2 = x^3 + 33x$ & $13$ & $2$ & $5$  & $9$  & $9$ \\
                  & $17$ & $2$ & $14$ & $1$  & $1$ \\
$y^2 = x^3 + 34x$ & $5$  & $2$ & $4$  & $2$  & $2$ \\
                  & $13$ & $2$ & $2$  & $9$  & $9$ \\
                  & $29$ & $2$ & $17$ & $21$ & $21$ \\
$y^2 = x^3 + 39x$ & $5$  & $3$ & $0$  & $0$  & $0$ \\
                  & $17$ & $2$ & $10$ & $9$  & $9$ \\
                  & $29$ & $2$ & $10$ & $2$  & $2$ \\
\hline
\end{tabular}
\end{table}

At the fifteen pairs with $v_\fp(B(\fp)) = 2$ exactly,
$\mathfrak{c}(\fp) \ne 0$, and the reductions
$p^{-2}B(\fp) \bmod \mathfrak{m}_W$ lie in $\F_p$.  At $(-39, 5)$ the
valuation is $3$ exactly, so $\mathfrak{c}(\fp) = 0$; the three terms of
\eqref{eq:bracket} have valuation $2$ each there, and the valuation $3$
of their combination is a cancellation.  This is the smallest pair at
which the unit condition fails, and it tests the equivalence of (1) and
(3) in Theorem~\ref{thm:criterion} in the direction the other fifteen
do not: the bracket detects, through three Hecke $L$-values of weight
$2p-1$, the failure that the scan found in the regulator.

The values agree with an independent computation.  The second
coefficient $c_2(p)$ of $L_p(E,T)$ \eqref{eq:Lpjets}, computed from the
modular symbols, is a $p$-adic unit at each of the fifteen pairs and is
divisible by $5$ at $(-39, 5)$ (\S\ref{ssec:control}), which is what
Proposition~\ref{prop:exactreduction} with
Proposition~\ref{prop:grading} and Lemma~\ref{lem:decoupling}
requires; and its residue $\kappa(p) = 2c_2(p) \bmod p$ is, at each of
the sixteen pairs, the one Corollary~\ref{cor:residue} predicts from
$B(\fp)$, $a_p$ and $f_0\omega_E$, with $\omega_E = 2$ for $D > 0$ and
$\omega_E = 1 - i$ for $D < 0$ (Table~\ref{tab:curves}).  The two
computations share no data, one being a sum of
complex division values of the lemniscatic $\wp$-function and the other
a modular symbol.

Two pairs at which the unit condition fails have no bracket.  At
$(-33, 37)$, with $1920$ ray classes, the run at $600$ digits, of $46$
minutes, was inconclusive: two of the three class sums were recognised
in $\Q(i)$, but the third, $P_{2p-2,1}$, was not, its evaluation losing
about $240$ digits at this class count, so no valuation was obtained; a
run at higher precision was not made.  At $(-39, 15289)$ the run was not attempted: the cost per
class grows like $p^4$, and the working precision with it, so it would
take about $10^{10}$ times the six minutes of $(17, 37)$.  At both pairs
the unit condition is decided by the regulator alone,
$v_\fp(\Reg_\fp) = 3$ giving $\tilde c_2(p) \notin \Z_p^\times$ by the
refuting reading of \S\ref{ssec:scan}, and at $(-33, 37)$ also by
$v_{37}(c_2) = 1$; Theorem~\ref{thm:criterion} then predicts
$v_\fp(B(\fp)) \ge 3$ at both, which has not been checked.

The evidence is computational: each number quoted is an exact element
of $\Q(i)$ whose exactness rests on those gates and not on an algebraic
identity.

\part{Formalisation}\label{part:formal}
The results of Part~\ref{part:theory} are accompanied by a formal
verification in Lean~4, in the \texttt{formalisation/} directory of the
repository linked to in \S\ref{ssec:software}.  Its purpose is to reduce the work of
checking this paper: a referee who accepts a short list of statements
quoted from the literature need not check the deductions that run from
them to the results.

A blueprint, stating each result informally beside the declaration that
formalises it, is in \texttt{formalisation/blueprint/} of that
repository, with the two commands that build it.

\section{Why the formalisation is partial}\label{sec:formalpartial}
A complete formalisation would construct, inside Lean, the Selmer group
$\Sel_{p^\infty}(E/\Q)$, the group $\Sha(E/\Q)$, the Katz two-variable
$p$-adic $L$-function, and the cyclotomic $p$-adic height pairing, and
would prove Rubin's main conjecture for $K$, Mazur's control theorem,
Schneider's leading-term theorem and the Bannai--Kobayashi comparison.
Mathlib, the standard Lean mathematical library \cite{Mathlib},
contains none of these.  It has no Shafarevich--Tate group, no Weil--Ch\^atelet group,
and no Selmer group of an elliptic curve; it has no $p$-adic
$L$-function of any kind.  Supplying them is a formalisation project far beyond the scope of this project.

What \emph{is} achievable is a formalisation \emph{modulo the literature}:
the classical theorems this paper uses are assumed, in explicitly
stated form, and everything the paper adds to them is machine-checked.
The rest of this part makes that precise.

\section{What is proved}\label{sec:formalproved}
The classical inputs are collected in one Lean structure,
\texttt{ClassicalInputs}, whose fields are the statements this paper
takes from the literature.  Each field carries the precise citation it
renders.  The formal results are then literal implications: they take a
value of that structure as a hypothesis and assert a conclusion.  Two
are proved: they are the rows above the double rule below.  Under it
is a selection of subsidiary statements of the paper that are proved as well, several
of them ring-generic lemmas that take no classical input at all.

\begin{center}
\begin{tabular}{l l}
\hline
This paper & Lean declaration \\
\hline
Theorem~\ref{intro:consequence} $=$ Theorem~\ref{prop:consequence}
  & \texttt{prop\_consequence} \\
Proposition~\ref{prop:dictionary}
  & \texttt{prop\_dictionary} \\
\hline\hline
Lemma~\ref{lem:noanomalous}(2)
  & \texttt{anomalous\_iff\_five} \\
Lemma~\ref{lem:c0c1}
  & \texttt{c0\_eq\_zero}, \texttt{c1\_eq\_zero} \\
Proposition~\ref{prop:normalisation}
  & \texttt{isPUnit\_c2tilde\_iff} \\
Proposition~\ref{prop:grading}(2)
  & \texttt{isUnit\_iff\_residue\_ne\_zero\_of\_grading\_congr} \\
Lemma~\ref{lem:decoupling}
  & \texttt{Decoupling.coeff\_two\_mul} \\
\hline
\end{tabular}
\end{center}

\noindent Where several declarations together render one statement, the
table names the one that states it; \texttt{formalisation/README.md}
lists the others.

Two properties of the verification are checked by the
script \texttt{scripts/audit.sh}:
\begin{enumerate}
  \item No proof in the imported tree is incomplete.
  \item The $73$ audited declarations
depend on no axiom beyond the three standard ones of the Lean
library:
\[
   \texttt{propext}, \qquad \texttt{Classical.choice}, \qquad
   \texttt{Quot.sound};
\]
and in particular on no assumption introduced by this project.  
\end{enumerate}
There are no \texttt{axiom} declarations anywhere in the formalisation: every
assumed input is a field of a structure, and therefore appears in the
statement of any theorem that uses it.  No conjectural statement is
assumed anywhere: the unit condition enters
Theorem~\ref{prop:consequence} as an ordinary hypothesis at a single
prime, and every other hypothesis is a classical theorem with its
citation.  The version of mathlib is
recorded in the repository; at the time of writing it is
\texttt{v4.33.1}.

That the assumptions are consistent, and that they do not already
contain the conclusion, are also proved.  The repository constructs a
value of \texttt{ClassicalInputs} in which every field holds, which
shows the assumption set is satisfiable; and a second value in which
the Shafarevich--Tate object is nontrivial, which shows the assumptions
alone do not force $\Sha(E/\Q)[p^\infty] = 0$.  Without the first, the
assumptions could be contradictory and every theorem above vacuously
true; without the second, they could already contain the conclusion.

\section{What a referee has to check}\label{sec:formalreferee}
Lean certifies that the conclusions follow from the assumptions.  It
cannot certify that the assumptions are true, or that they say what the
cited papers say.  That is the check the formalisation leaves to a
human, and it is finite: read the fields of \texttt{ClassicalInputs} and
its constituent structures, in the directory
\texttt{formalisation/FinShaRank2/Interface/}, and judge whether each is
a faithful statement of the result it cites.  There are six such files,
one for each layer: the Mazur--Tate--Teitelbaum $L$-function, the
Selmer and Iwasawa modules, the $p$-adic height pairing, the Katz
measure, the Bannai--Kobayashi jet, and the aggregate.  The
guide \texttt{formalisation/README.md} lists them statement by statement
against the labels of this paper, and records which are proved, which are
assumed, and which are not formalised.

A referee who accepts those statements, and who trusts the Lean kernel,
can take the results in Part~\ref{part:theory} as verified.

\section{Limits of the formalisation}\label{sec:formallimits}
Beyond the standing limitation that the existing literature results are assumed to be correct (rather than formally verified to be so), we indicate the following further limitations.

\begin{enumerate}
   \item Part~\ref{part:evidence} is not formalised.  The computations are checked by the means described in \S\ref{ssec:software}, and the formalisation says nothing about them.

  \item One statement is not formalised as the paper states it:
Lemma~\ref{lem:noanomalous}(1) rests on Deuring's reduction criterion,
which mathlib does not have, and is not formalised; part~(2) is.

  \item Theorem~\ref{thm:criterion} is partly formalised.  The
equivalence of its statements (1) and (2) is
Proposition~\ref{prop:dictionary}, in the table above.  That of (1) and
(3) has no Lean counterpart: Lemma~\ref{lem:moments} and
Proposition~\ref{prop:exactreduction} are not formalised, and a
formalisation of Theorem~\ref{thm:criterion} would begin there.

  \item The Eisenstein--Kronecker data of Definition~\ref{def:classsums}
is assumed --- the set $D_E$, six of the class functions $r_{a,b}$, and a
valuation at each rational prime --- but no theorem reads a section
value, the criterion those functions feed being the half of
Theorem~\ref{thm:criterion} that is not formalised.  The Lean rendering
carries the six indices with $a + b \le 3$; the paper fixes no finite
index set.

\end{enumerate}

\part{Questions}\label{part:outlook}
This part records what the paper does not settle.  The limits of the
formalisation are recorded in \S\ref{sec:formallimits} and not
repeated here.

\section{Questions}\label{sec:questions}\label{ssec:ranktwo}\label{ssec:gillardmech}
The computations of Part~\ref{part:evidence} raise the three questions
of \S\ref{ssec:intro_questions}, repeated here with what bears on them.
Throughout, $E/\Q$ has CM by the maximal order $\OK$ of an imaginary
quadratic field $K$, with $\rank E(\Q) = 2$, $L(E,1) = 0$ and
$w(E) = +1$, and $p$ runs over the split primes of good reduction
outside $S_E$.  By Theorem~\ref{thm:criterion} the unit condition
$\tilde c_2(p) \in \Z_p^\times$ fails at $p$ exactly when the
cyclotomic regulator or $\Sha(E/\Q)[p^\infty]$ acquires a factor of
$p$, and exactly when $v_\fp(B(\fp)) \ge 3$.

\begin{qrestate}{q:infinite}
Is the set of split primes at which the unit condition fails an
infinite set?
\end{qrestate}

If the residue of $\Reg_\gamma$ modulo $p$ behaved like a random
element of $\F_p$, the unit condition would fail with probability $1/p$
at each split prime, at about $\tfrac12\log\log X$ primes below $X$:
infinitely many, at a rate too slow to see.  The scans of
\S\ref{ssec:scan} found three failures in five curves below $30{,}000$
against $3.8$ expected (\S\ref{ssec:scanweight}).  The same count is
available for non-CM curves from Stein and Wuthrich: for the curve of
conductor $389$, one prime, $16231$, among the $5{,}005$ good ordinary
primes below $48{,}859$ has $\ord_p(\Reg_p) = 3$ against the rank $2$
\cite[Thm.~12.3, Rem.~12.4]{SteinWuthrich}, where the model expects
about $1.8$; and among the curves of rank at least $2$ and conductor
below $130{,}000$, at primes below $1000$, they list excesses of up to
$5$ in the valuation of $\Reg_p$ over the rank, matched by the
valuation of the leading coefficient of $L_p(E,T)$, with
$\Sha[p^\infty] = 0$ \cite[Rem.~12.1]{SteinWuthrich}.  The question for
the value $L(\bar\psi_E^{\,p}, p)$ of Theorem~\ref{thm:clscriterion} has
the same shape, and the four pairs below $30{,}000$ at which Coates,
Liang and Sujatha found $\ord_p(c_p^+) = g + 1$ are its instances.

\begin{qrestate}{q:independent}
Are the failures of the cyclotomic criterion and of the criterion of
Coates, Liang and Sujatha independent?
\end{qrestate}

Below $30{,}000$ the two criteria failed at disjoint sets of primes: at
each of the four pairs where theirs is inconclusive the cyclotomic
regulator is a unit, and at each of the three where the regulator is
not a unit theirs succeeds (\S\ref{ssec:family}); at $(-39, 5)$ the
bracket shows the failure as well (\S\ref{sec:bracket}).  The two test
critical values of different weights, $L(\bar\psi_E^{\,p}, p)$ and the
three values of $\bar\psi_E^{\,2p-1}$ in $B(\fp)$
(Corollary~\ref{cor:bracketL}), and no implication between them is
known.  If the two events at $p$ are independent with probability about
$1/p$ each, the expected number of primes at which both occur is
bounded by $\sum_p p^{-2}$, and a Borel--Cantelli heuristic suggests
that for all but finitely many split $p$ at least one of the two
criteria gives $\Sha(E/\Q)[p^\infty] = 0$.  For the five curves of
Part~\ref{part:evidence} the two together decide every split prime of
good reduction below $30{,}000$, the last case by
Proposition~\ref{prop:e4}.

We speculate about the existence of a single algebraic object that
governs the failure of the unit condition, analogous to the Heegner
point in rank one.

\begin{qrestate}{q:fixed}
Is there a fixed algebraic number whose prime divisors contain every
split prime at which the unit condition fails?
\end{qrestate}

For the zeroth jet the answer is yes: $c_0(L^{\mathrm{K}})$ is a unit
multiple of the class sum $P_{0,1}$ (Lemma~\ref{lem:moments}), one
algebraic number, and Gillard's theorem (Theorem~\ref{thm:gillard}) is
the statement of that shape for the whole measure.  Its proof is
Sinnott's argument \cite[Thm.~1]{Sinnott} transplanted to the CM curve:
for every split $\fp$ the same fixed rational function
$\partial\log\Theta_{\mathfrak a}$ of \S\ref{ssec:ellunits} generates
the measure, only the specialisation data moving with $\fp$; a positive
$\mu$-invariant would force an algebraic identity among the torsion
translates of that function over $\overline{\F}_p$, which Gillard's
linear-independence theorem \cite[Th.~1.5.1]{Gillard} forbids, its
divisor being explicit and nonzero.  One algebraic nonvanishing thus
excludes a positive $\mu$-invariant at every split prime at once.  For
the second jet no such object is in sight.  What the Bannai--Kobayashi
theory of \S\ref{ssec:BK} delivers are the polynomial moments of the
measure, values of fixed algebraic quantities; the second jet in the
cyclotomic direction is an integral against $\log_p$, and
Proposition~\ref{prop:exactreduction} reaches it through a polynomial of
degree $2p-2$, paired with class sums of weight $2p-1$, which vary with
$p$.  The Kummer congruence of the measure relates those class sums to
the ones of weight at most $3$ only modulo $p$, while the criterion
class needs the bracket modulo $p^3$.

The answer can be no.  For the Kubota--Leopoldt functions
$L_p(s,\omega^k)$ the $\mu$-statement is the theorem of
Ferrero--Washington \cite{FerreroWashington}, parallel to Gillard's, but
the $\lambda$-statement is false: $\lambda > 0$ occurs exactly at the
irregular primes, governed by $p \mid B_k$ with $k$ taken modulo $p-1$,
so that the Bernoulli number involved depends on $p$ and the
exceptional set is detected by no fixed algebraic number.  The
criterion of Coates, Liang and Sujatha is of the same kind, testing a
critical value of $\bar\psi_E^{\,p}$ whose weight moves with $p$, and
their bound on the $\Z_p$-corank of $\Sha(E/\Q)[p^\infty]$, of order
$p$ for all large good ordinary $p$ \cite[Thm.~1.1]{CLS2}, shows the
ceiling of that input: the bound is obtained by applying the product
formula to a nonzero algebraic integer of $K$ built from
$L(\bar\psi_E^{\,p}, p)$, of logarithmic height $\tfrac12 p\log p + O(p)$
\cite[\S 4]{CLS2}, and any such argument yields a bound of order
$h(p)/\log p$.  Finiteness of $\Sha(E/\Q)[p^\infty]$ for large $p$ by
that route needs an input of height $o(\log p)$, which is to say one
fixed algebraic number rather than a family indexed by $p$.

The rank-two Selmer constructions do not supply one.  The generalised
Kato classes of Darmon--Rotger \cite{DarmonRotger16, DarmonRotgerJAMS},
built from $p$-adic families of diagonal cycles, and the criterion of
Castella--Hsieh that the nonvanishing of one of them forces
$\dim_{\Q_p}\Sel(\Q, V_pE) = 2$ \cite[Cor.~C]{CastellaHsieh}, hence
$\Sha(E/\Q)[p^\infty]$ finite in rank two, are constructed one prime at
a time, with no rational structure to compute once and factor; the
nonvanishing is a machine verification, twenty-one non-CM pairs
\cite[\S 6]{CastellaHsieh}, and the criterion requires a prime of
multiplicative reduction \cite[\S 1]{CastellaCM}, which a CM curve over
$\Q$ does not have.  Castella's CM analogue constructs
$\kappa_p \in \Sel(\Q, V_pE)$ with the same implication
\cite[Thm.~B]{CastellaCM}, but its nonvanishing is not known at a single
prime, and the statement there that produces a basis of the Selmer group
assumes $\Sha(E/\Q)[p^\infty]$ finite \cite[Thm.~C]{CastellaCM}.
Darmon and Rotger say of their own Euler system that it does not yield
finiteness of the Selmer or Shafarevich--Tate group
\cite[Introduction]{DarmonRotgerJAMS}.  The other routes stop at the
same place: Kolyvagin's conjecture on derived Heegner classes, proved
for large classes of non-CM curves
(\cite[Thms.~9.1, 9.3]{ZhangKolyvagin}; \cite[Thm.~A]{Sweeting}),
yields in rank $\ge 2$ an identity between vanishing orders and Selmer
coranks rather than a bound (\cite[Thm.~11.2]{ZhangKolyvagin};
\cite[Cor.~B]{Sweeting}), with finiteness of $\Sha[p^\infty]$ requiring
a further computation at each prime
\cite[Prop.~3.10, Rmk.~3.11]{JLS}; the Beilinson--Flach systems bound
Selmer groups in twisted rank-zero settings \cite[Thm.~11.6.4]{KLZ};
and the plectic points of Fornea--Gehrmann \cite[\S 1]{ForneaGehrmann}
are conjectural and degenerate to rank one over $\Q$.  No construction
is known that attaches an algebraic number to $L''(E,1)$.

\section{The inert half}\label{sec:inert}
Every statement of \S\S\ref{sec:thmA}--\ref{sec:criterion} is about
split primes, where the reduction is ordinary.  At an inert prime it is
supersingular \cite{Deuring}, \cite[Ch.~13, \S 4]{LangEF}, and three of
the objects the argument runs on are absent.  There is no unit root
$\alpha_p$, so the multiplier $(1-\alpha_p^{-1})^2$ of \eqref{eq:interp}
is not a unit and Definition~\ref{def:c2tilde} has no analogue as
stated.  The Mazur--Tate--Teitelbaum construction gives no element of
$\Lambda$, and the Selmer module is not $\Lambda$-torsion
\cite[Introduction]{PollackRubin}; what replaces them is Pollack's pair
$L^{\pm}_E \in \Lambda$ \cite[\S 7]{PollackRubin} and Kobayashi's
signed Selmer groups \cite[Def.~3.3]{PollackRubin}, whose main
conjecture is a theorem for CM curves at good supersingular $p > 2$
\cite[Thm.~7.3]{PollackRubin}.  And the generating object of
\S\ref{sec:questions} is not integral at $\fp$: Bannai and Kobayashi
find its two-variable expansion at a supersingular prime to have
$p$-adic radius of convergence $p^{-p/(p^2-1)} < 1$, so that no
two-variable measure exists \cite[\S 4]{BannaiKobayashi}, and record
Kurihara's expectation that the cyclotomic $\mu$-invariant there is
$p/(p^2-1)$ rather than $0$.  The inert zeroth-jet input will not be
the vanishing of a $\mu$-invariant.

The dictionary has inert counterparts.  Bernardi and Perrin-Riou
\cite{BernardiPerrinRiou} and Sprung \cite[Conj.~1.2 and \S 4.1]{Sprung}
formulate the $p$-adic Birch--Swinnerton-Dyer conjecture at
supersingular primes for the pair of $p$-adic $L$-functions, with the
height pairings of \cite{KobayashiHeights}; the inert form of the unit
condition is that statement with the minimum order of vanishing equal
to $2$ and the leading coefficient a $p$-adic unit.  On the arithmetic
side Castella proves the corresponding leading-term theorem for the
signed Selmer groups, given finiteness of $\Sha(E/\Q)[p^\infty]$ and
nondegeneracy of the regulator \cite[Thm.~A]{CastellaSigned}, which
with the main conjecture are the ingredients of an inert analogue of
Proposition~\ref{prop:dictionary}; we have not written one out.  The
anticyclotomic inert theory has moved: Rubin's conjecture on the local
units is a theorem of Burungale--Kobayashi--Ota for $p \ge 5$
\cite[Thm.~2.1]{BKO1}, who relate values of Rubin's inert $p$-adic
$L$-function to rational points \cite[Thm.~1.1]{BKO2}, the counterpart
of \cite{Rubin92}; and Kobayashi's $p$-adic Gross--Zagier formula at
supersingular primes \cite{KobayashiGZ} reaches rank one only, both
sides vanishing in rank two by Gross--Zagier \cite{GrossZagier} and
Kolyvagin \cite{Kolyvagin}.  Two things would have to be settled before
the scan of \S\ref{ssec:scan} could be repeated at inert primes: the
generic valuation of the pair of regulators for a curve of rank two,
computable by the $\sigma$-function algorithms of
\cite[\S 4]{SteinWuthrich}, and the rigidity input, for which no
supersingular counterpart of the generating object is known.

\section{Nonvanishing of the rank-two $\fp$-adic
regulator}\label{sec:bertrand}
One transcendence-theoretic result bears on the height pairing, and it
is a rank-one result: for a CM elliptic curve the $p$-adic height of a
single point of infinite order is nonzero, by Bertrand's work on theta
functions in several variables \cite{Bertrand}.  The form we quote is
Rubin's \cite[Thm.~A.1]{AgboolaHoward}: for a number field $F$ and a
point $P \in E(F)$ of infinite order, the $\fp$-adic height of $P$
along the $\Z_p$-extension of $K$ unramified outside $\fp$ is nonzero.
That height is not the cyclotomic one, and Rubin reaches the cyclotomic
height only for points that are universal norms in the anticyclotomic
tower \cite[Lem.~A.2 and~A.4]{AgboolaHoward}; for the cyclotomic
pairing on a CM curve over $\Q$, Burungale--Disegni attribute
nonvanishing to the same result \cite[\S 1.1]{BurungaleDisegni}.  Either
way the statement is about one point.  In rank two the quantity that
must be nonzero is a determinant.

\begin{problem}\label{prob:determinant}
Let $E/\Q$ be an elliptic curve with complex multiplication by the
maximal order of an imaginary quadratic field $K$.  Assume:
\begin{enumerate}
\item[(a)] $P_1, P_2 \in E(\Q)$ are independent points of infinite
      order;
\item[(b)] $p = \fp\bar\fp$ is a split prime of good reduction.
\end{enumerate}
Prove that the cyclotomic $\fp$-adic height pairing
$\langle\ ,\ \rangle_\fp$ on $E(\Q)$
\textup{(\S\ref{ssec:notation})} satisfies
\[
   \det\bigl(\langle P_i, P_j\rangle_\fp\bigr) \;\ne\; 0 .
\]
\end{problem}

In rank at least two no theorem gives the determinant nonzero at a
single prime of a single curve \cite[\S 1.1]{BurungaleDisegni}.
Part~\ref{part:evidence} gives it at finitely many primes, at two
standards.  The scans of \S\ref{ssec:scan}, at the standard of a
measurement, give it at all $8{,}052$ pairs, a finite valuation being a
nonzero value.  The bracket of \S\ref{ssec:bracketcert}, under the
gates described in \texttt{code/README.md}, gives $v_\fp(B(\fp)) = 2$
at fifteen pairs; by Proposition~\ref{prop:exactreduction}(3) that is
$\mathfrak{c}(\fp) \ne 0$, by Proposition~\ref{prop:grading}(2) with
Lemma~\ref{lem:decoupling}(2) and
Proposition~\ref{prop:normalisation} it is
$\tilde c_2(p) \in \Z_p^\times$, and by
Proposition~\ref{prop:dictionary}, direction $(1) \Rightarrow (2)$, it
is the nondegeneracy of the pairing together with
$v_p(\Reg_\gamma) = 0$.  Neither settles the Problem, which asks for
every split $\fp$ of good reduction, or for all but finitely many.

Nor would a solution give the unit condition.  By
Proposition~\ref{prop:dictionary}, at a split $p \notin S_E$ of good
reduction the unit condition is equivalent to
\[
   \text{$\langle\ ,\ \rangle_\fp$ nondegenerate on $E(\Q)$ with }
   v_p\bigl(\Reg_\gamma\bigr) = 0
   \qquad\text{and}\qquad
   \Sha(E/\Q)[p^\infty] = 0 ,
\]
and the first of these is strictly stronger than the nondegeneracy the
Problem asks for.  A solution would give the nondegeneracy, not the
valuation, and would say nothing about $\Sha$.  Transcendence methods
attack nonvanishing; the criterion of \S\ref{sec:criterion} decides
the valuation.

\section{Non-CM curves}\label{sec:noncm}
Nothing in \S\S\ref{sec:thmA}--\ref{sec:criterion} survives the
removal of complex multiplication.  Theorem~\ref{prop:consequence} uses
Rubin's main conjecture for $K$; Theorem~\ref{thm:criterion} runs on
the Katz measure and on the algebraic theta function of
\S\ref{ssec:BK}, both attached to the CM lattice; and the mechanism of
\S\ref{sec:questions} needs a single algebraic object generating the
measures at every split prime at once, which the CM lattice supplies.
The gap shows already at the zeroth jet.  For a CM curve the
zeroth-jet statement is Gillard's theorem.  Without complex
multiplication it is the vanishing of $\mu_{\mathrm{an}}(p)$, which is
conjectural, one prime at a time: Greenberg conjectures $\mu = 0$ for
the Selmer group over $\Q_\infty$ whenever $E[p]$ is irreducible
\cite[Conj.~1.11]{Greenberg}, and with the main conjecture that gives
$\mu_{\mathrm{an}}(p) = 0$ at that prime.  Uniformity in $p$ is what
the fixed generating object supplies and what the non-CM case lacks a
replacement for; a $p$-adic family of modular forms does not serve, a
Hida family being constructed after $p$ has been fixed.

\end{document}